\newtheorem{theorem}{Theorem}
\newtheorem{lemma}[theorem]{Lemma}
\newtheorem{definition}[theorem]{Definition}
\begin{document}

\noindent{\Large
Transposed Poisson structures on quasi-filiform Lie algebras of \\
maximum length}\footnote{
This work was supported by UIDB/MAT/00212/2020,  UIDP/MAT/00212/2020, 2022.02474.PTDC and by grant F-FA-2021-423, Ministry of Higher Education, Science and Innovations of the Republic of Uzbekistan.
}

 \bigskip

\begin{center}

 {\bf
Kobiljon Abdurasulov\footnote{CMA-UBI, Universidade da Beira Interior, Covilh\~{a}, Portugal; \ Institute of Mathematics Academy of
Sciences of Uzbekistan, Tashkent, Uzbekistan; \ abdurasulov0505@mail.ru},
Fatanah Deraman\footnote{Universiti Malaysia Perlis, Perlis, Malaysia; \ fatanah@unimap.edu.my},
Azamat Saydaliyev \footnote{Institute of Mathematics Academy of
Sciences of Uzbekistan, Tashkent, Uzbekistan; National University of Uzbekistan, Tashkent, Uzbekistan; \ azamatsaydaliyev6@gmail.com}
  \&
Siti Hasana Sapar\footnote{Universiti Putra Malaysia, Selangor, Malaysia; \ sitihas@upm.edu.my}

}

\end{center}

\

\medskip

\noindent{\bf Abstract}:
{\it This article will discussing on $\frac{1}{2}$-derivations of  quasi-filiform Lie algebras of maximum length. The non-trivial transposed Poisson algebras with the quasi-filiform Lie algebras of maximum length are constructed by using $\frac{1}{2}$-derivations of Lie algebras. We have established commutative associative multiplication to construct a transposed Poisson algebra with an associated given Lie algebra.}

\medskip

\bigskip

\noindent {\bf Keywords}:
{\it   Lie algebra, transposed Poisson algebra, $\frac12$-derivation.}

\bigskip
\noindent {\bf MSC2020}:  17A30, 17B40, 17B61, 17B63.

 \bigskip

\section*{Introduction}

Bai et.al.~\cite{Bai} introduced a dual notion of the Poisson algebra, which is called a \textit{transposed Poisson algebra}, by changing the roles of the two multiplications in the Leibniz rule.
Poisson algebra is introduced to commutative associative algebras by their derivation. Similarly, on a Lie algebra, the concept of a transposed Poisson algebra is defined by the $\frac{1}{2}$-derivation. This way of defining not only shares some properties of a Poisson algebra, such as the closedness under tensor products and the Koszul self-duality as an operation, but also admits a rich class of identities \cite{kms,Bai,fer23,bfk22,lb23,bl23,conj}.

The description of all Poisson algebras with a fixed Lie or associative part~\cite{YYZ07,jawo,kk21} is one of the natural tasksm in the theory.
In this paper, we present transposed Poisson algebras that demonstrate quasi-filiform Lie algebras of maximum length.
Abdurasulov et.al.~\cite{Kobiljon}, obtained the description of all transposed Poisson algebra structures on solvable Lie algebras with filiform nilradical.

 It is worth noting that any unital transposed Poisson algebra is a particular case of a ``contact bracket'' algebra	and a quasi-Poisson algebra \cite{bfk22}. Every transposed Poisson algebra is a commutative  Gelfand-Dorfman algebra \cite{kms}, and is also an algebra of Jordan brackets \cite{fer23}. Ferreira et. al. \cite{FKL} established a relation between transposed Poisson algebras and $\frac{1}{2}$-derivations of Lie algebras. These ideas played an active role in describing all transposed Poisson structures on  Witt and Virasoro algebras in \cite{FKL}.  Twisted Heisenberg-Virasoro,   Schr\"odinger-Virasoro and extended Schr\"odinger-Virasoro algebras were studied in \cite{yh21}. Meanwhile, Schr\"odinger algebra in $(n+1)$-dimensional space-time was discussed by \cite{ytk}. Then, in \cite{kk23} Witt type Lie algebras were discussed and \cite{kkg23} studied generalized Witt algebras. In the paper by \cite{kk22,kkg23}, they were studied on Block Lie algebras and \cite{KK7} was studied on Lie algebra of upper triangular matrices and  a transposed Poisson structures on Lie algebra of upper triangular matrices, while \cite{kkinc} was discussed on Lie incidence algebras. It was proved in \cite{klv22} that any complex finite-dimensional solvable Lie algebra admit a non-trivial transposed Poisson structure. In \cite{bfk23} authors gave the algebraic and geometric classification of three-dimensional transposed Poisson algebras. The list of actual open questions on transposed Poisson algebras can be seen in \cite{bfk22}. Recently, all transposed Poisson algebra structures on oscillator Lie algebras, i.e., on one-dimensional solvable extensions of  $(2n+1)$-dimensional Heisenberg algebra; on solvable Lie algebras with naturally graded filiform nilpotent radical; on $(n+1)$-dimensional solvable extensions of the $(2n + 1)$-dimensional Heisenberg algebra; and on $n$-dimensional solvable extensions of   $n$-dimensional algebra with the trivial multiplication were described \cite{KKh}. This paper also gave an example of a finite-dimensional Lie algebra with non-trivial $\frac{1}{2}$-derivations without non-trivial transposed Poisson algebra structures.	
Also, similar studies can be seen in \cite[Section 7.3]{k23}, and in the references given in that paper.
In \cite{bfk23}, they were obtained the algebraic and geometric classification of all complex $3$-dimensional transposed Poisson algebras, and \cite{ch24} was studied on the algebraic classification of all complex $3$-dimensional transposed Poisson $3$-Lie algebras.

The purpose of this article is to find all transposed Poisson algebras that demonstrate quasi-filiform Lie algebras of maximum length. In order to achieve our goal, we have organized this paper as follows: in Section 2, we described $\frac{1}{2}$-derivations of quasi-filiform Lie algebras of maximum length and Section 3, will describe all non-trivial transposed Poisson algebras with quasi-filiform Lie algebras of maximum length. Next, by using descriptions of $\frac{1}{2}$-derivations of Lie algebras, we established commutative associative multiplication to construct a transposed Poisson algebra associated with given Lie algebra.

\bigskip

\section{PRELIMINARIES}

This section will discuss the concepts and known results on algebras over the field $\mathbb{C}$ unless otherwise stated.

We start with the definition of a Poisson algebra.

\begin{definition}
Let $\mathfrak{L}$ be a vector space equipped with two bilinear operations
$$
\cdot,\; [-,-] :\mathfrak{L}\otimes \mathfrak{L}\to \mathfrak{L}.$$
The triple $(\mathfrak{L},\cdot,[-,-])$ is called a
\textbf{Poisson algebra} if $(\mathfrak{L},\cdot)$ is a commutative associative algebra and
$(\mathfrak{L},[-,-])$ is a Lie algebra which satisfies the compatibility condition
\begin{equation}\label{eq:LR}
[x,y\cdot z]=[x,y]\cdot z+y\cdot [x,z].
\end{equation}
\end{definition}

Eq.~(\ref{eq:LR}) is called {\bf Leibniz
rule} since the adjoint operators of Lie algebra are
derivations of the commutative associative algebra.

\begin{definition}\label{TPA}
Let $\mathfrak{L}$ be a vector space equipped with two bilinear operations
$$
\cdot,\; [-,-] :\mathfrak{L}\otimes \mathfrak{L}\to \mathfrak{L}.$$
The triple $(\mathfrak{L},\cdot,[-,-])$ is called a \textbf{transposed Poisson algebra} if $(\mathfrak{L},\cdot)$ is a commutative associative algebra and $(\mathfrak{L},[-,-])$ is a Lie algebra which satisfies the following compatibility condition
\begin{equation}
2z\cdot [x,y]=[z\cdot x,y]+[x,z\cdot y].\label{eq:dualp}
\end{equation}
\end{definition}

Eq.~\eqref{eq:dualp} is called {\bf transposed
Leibniz rule} because the roles of two binary operations in the Leibniz rule in a Poisson algebra are switched. Further, the resulting operation is rescaled by introducing a factor 2 on the left-hand side.

Transposed Poisson algebras were initially introduced by Bai et. al. \cite{Bai}. A transposed Poisson structure $\cdot$ on $\mathfrak{L}$ is called \textbf{trivial}, if $x\cdot y=0$ for all $x,y\in\mathfrak{L}$.
%
%

\begin{definition}\label{halfderiv}
  Let $(\mathfrak{L}, [-,-])$ be an algebra with a multiplication $[-,-],$ and $\varphi$ be
a bilinear map. Then $\varphi$ is a $\frac12$-\textbf{derivation} if it satisfies:
\begin{equation}\label{halfderiv1}
\varphi([x, y]) = \frac12([\varphi(x), y] + [x, \varphi(y)]).
\end{equation}
\end{definition}

Note that $\frac{1}{2}$-derivations are a particular case of $\delta$-derivations introduced by Filippov in \cite{fil1}
(see, \cite{k07,k10} and references therein).  It is easy to see from Definition \ref{halfderiv} that $[\mathfrak{L},\mathfrak{L}]$ and $Ann(\mathfrak{L})$ are invariant under any $\frac 12$-derivation of $\mathfrak{L}$. Definitions \ref{TPA} and \ref{halfderiv} immediately imply the following lemma.

\begin{lemma}\label{lemma1}
  Let $(\mathfrak{L}, [-,-])$ be a Lie algebra and $\cdot$ a new binary (bilinear) operation on $\mathfrak{L}$. Then
$(\mathfrak{L}, \cdot, [-,-])$ is a transposed Poisson algebra if and only if $\cdot$ is commutative and associative and for
every $z \in \mathfrak{L}$ the multiplication by $z$ in $(\mathfrak{L}, \cdot)$ is a $\frac12$-derivation of $(\mathfrak{L}, [-,-])$.
\end{lemma}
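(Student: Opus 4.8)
The plan is to show that both directions of the stated equivalence reduce to the same system of conditions once we introduce, for each fixed $z \in \mathfrak{L}$, the left-multiplication operator $L_z \colon \mathfrak{L} \to \mathfrak{L}$ defined by $L_z(x) = z \cdot x$. Since $\cdot$ is bilinear, each $L_z$ is a linear endomorphism of $\mathfrak{L}$, so it is a legitimate candidate to be a $\frac{1}{2}$-derivation in the sense of Definition \ref{halfderiv}. The whole argument rests on the observation that the transposed Leibniz rule \eqref{eq:dualp} and the $\frac{1}{2}$-derivation identity \eqref{halfderiv1} for $L_z$ are literally the same equation after rearrangement.

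For the forward direction, I would assume that $(\mathfrak{L}, \cdot, [-,-])$ is a transposed Poisson algebra. By Definition \ref{TPA} this already gives that $(\mathfrak{L}, \cdot)$ is commutative and associative, so it remains only to check that each $L_z$ is a $\frac{1}{2}$-derivation. Fixing $z$ and writing out \eqref{halfderiv1} for $\varphi = L_z$ gives $L_z([x,y]) = \frac{1}{2}\big([L_z(x), y] + [x, L_z(y)]\big)$, that is, $z \cdot [x,y] = \frac{1}{2}\big([z \cdot x, y] + [x, z \cdot y]\big)$; multiplying by $2$ recovers exactly \eqref{eq:dualp}, which holds by hypothesis. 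Hence every $L_z$ is a $\frac{1}{2}$-derivation of $(\mathfrak{L}, [-,-])$.

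For the converse, I would assume $\cdot$ is commutative and associative and that each $L_z$ is a $\frac{1}{2}$-derivation. Then for all $x,y,z \in \mathfrak{L}$ the identity \eqref{halfderiv1} applied to $L_z$ reads $z\cdot[x,y] = \frac{1}{2}\big([z\cdot x, y]+[x, z\cdot y]\big)$, and multiplying by $2$ yields the compatibility condition \eqref{eq:dualp}. Since $(\mathfrak{L}, [-,-])$ is a Lie algebra by assumption and $(\mathfrak{L}, \cdot)$ is commutative associative, all three requirements of Definition \ref{TPA} are met, so $(\mathfrak{L}, \cdot, [-,-])$ is a transposed Poisson algebra.

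Since the equivalence is a direct translation of the two definitions, there is no genuine obstacle here; the only point requiring a little care is the bookkeeping — recognizing that commutativity of $\cdot$ makes left-multiplication the relevant operator, and that the factor $2$ appearing in \eqref{eq:dualp} is precisely what matches the $\frac{1}{2}$ in \eqref{halfderiv1}, so that the two identities genuinely coincide rather than merely resemble one another.
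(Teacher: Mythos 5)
Your proposal is correct and matches the paper's reasoning: the paper states this lemma as an immediate consequence of Definitions \ref{TPA} and \ref{halfderiv}, and your write-up simply makes explicit the observation that the transposed Leibniz rule \eqref{eq:dualp} and the $\frac12$-derivation identity for the multiplication operator $L_z$ are the same equation up to the factor of $2$. Nothing is missing.
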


%
%

Lower central series for a given Lie algebra $\mathfrak{L}$ is  defined as follows:
\[\mathfrak{L}^1=\mathfrak{L}, \ \mathfrak{L}^{k+1}=[\mathfrak{L}^k,\mathfrak{L}],  \ k \geq 1.\]

\begin{definition}
 A Lie algebra $\mathfrak{L}$ is said to be \textbf{nilpotent}, if there exists $k\in\mathbb N$ such that $\mathfrak{L}^{k}=\{0\}$. The minimal number $k$ with such property is said to be the \textbf{index of nilpotency} of the algebra $\mathfrak{L}$.
\end{definition}

\begin{definition}
An $n$-dimensional Lie algebra is called \textbf{quasi-filiform} if its index of nilpotency is equal to $n-1$.
\end{definition}

Now, let us define a maximum length for the nilpotent Lie algebras.

A Lie algebra $\mathfrak{L}$ is $\mathbb{Z}$-graded, if $\mathfrak{L} =\bigoplus_{i\in \mathbb{Z}}\mathbf{V}_i$, where $[V_i, V_j]\subseteq V_{i+j}$ for any $i, j \in \mathbb{Z}$ with a finite number of non-null spaces $V_i$. We say that a nilpotent Lie algebra $\mathfrak{L}$ admits \textbf{the connected gradation} $\mathfrak{L} = V_{k_1}\oplus\dots\oplus V_{k_t}$, if $V_{k_i}\neq\{0\}$ for any $i \  (1\leq i\leq t)$.

\begin{definition}
The number $l(\oplus \mathfrak{L})=l(V_{k_1}\oplus\dots\oplus V_{k_t})=k_t-k_1 + 1$ is called \textbf{the
length of gradation}. A gradation is called \textbf{of maximum length}, if $l(\oplus \mathfrak{L}) = \dim(\mathfrak{L})$.

\end{definition}

We denote $l(\mathfrak{L}) = \max\{l(\oplus L)\ \text{such that}\  L = V_{k_1}\oplus\dots\oplus V_{k_t}\  \text{is a connected gradation}\}$ of \textbf{the length of an algebra} $\mathfrak{L}$.

\begin{definition}
 A Lie algebra $\mathfrak{L}$ is called of \textbf{maximum length} if $l(\mathfrak{L}) = \dim(\mathfrak{L})$.
\end{definition}

In the following theorem we present the classification of quasi-filiform Lie algebras of maximum length given in \cite{Gomez}.

\begin{theorem} \label{thm26}
 Let $L$ be an $n$-dimensional quasi-filiform Lie algebra
of maximum length. Then the algebra $L$ is isomorphic to one of the following pairwise non-isomorphic algebras:

\[g^1_{(n,1)}:\begin{cases}
[e_1,e_i]=e_{i+1},& 2\leq i\leq n-2,\\[1mm]
[e_i,e_{n-i}]=(-1)^ie_{n},& 2\ \leq i\leq  \frac{n-1}{2}, \ n\geq5 \ \  and \ \ n \  is \  odd;\\[1mm]
\end{cases}\]

\[g^2_{(n,1)}:\begin{cases}
[e_1,e_i]=e_{i+1},& 2\leq i\leq n-2,\\[1mm]
[e_i,e_n]=e_{i+2},& 2\leq i\leq n-3, \ n\geq5; \\[1mm]
\end{cases} \quad g^3_{(n,1)}:\begin{cases}
[e_1,e_i]=e_{i+1},& 2\leq i\leq {n-2},\\[1mm]
[e_i,e_{n}]=e_{i+2},& 2\leq i\leq {n-3}\\[1mm]
[e_2,e_i]=e_{i+3}, & 3\leq i \leq {n-4}, \ n\geq7;
\end{cases}\]
\[g^1_{7}:\begin{cases}
[e_1,e_i]=e_{i+1},& 2\leq i\leq 5,\\[1mm]
[e_2,e_i]=e_{i+2},& 3\leq i\leq 4,\\[1mm]
[e_i,e_{7-i}]=(-1)^ie_{7},& 2\leq i\leq 3;\\[1mm]
\end{cases} \quad g^2_{9}:\begin{cases}
[e_1,e_i]=e_{i+1},& 2\leq i\leq 7,\\[1mm]
[e_2,e_i]=e_{i+2},& 3\leq i\leq 4 ,\\[1mm]
[e_2,e_5]=3e_7,\\[1mm]
[e_2,e_6]=5e_8,\\[1mm]
[e_3,e_i]=-2e_{i+3},&4\leq i\leq 5,\\[1mm]
[e_i,e_{9-i}]=(-1)^ie_{9},&2\leq i \leq 4;
\end{cases}\]
\[g^3_{11}:\begin{cases}
[e_1,e_i]=e_{i+1},& 2\leq i\leq 9,\\[1mm]
[e_2,e_i]=e_{i+2},& 3\leq i\leq 4 ,\\[1mm]
[e_2,e_i]=-e_{i+2},& 6\leq i\leq 7\\[1mm]
[e_3,e_7]=-e_{10},\\[1mm]
[e_3,e_i]=e_{i+3},&4\leq i\leq 5,\\[1mm]
[e_4,e_i]=e_{i+4},&5\leq i\leq 6,\\[1mm]
[e_i,e_{11-i}]=(-1)^ie_{11},&2\leq i \leq 5,
\end{cases}\]
where $\{e_1,e_2,\dots,e_{n}\}$ is a basis of the algebra.
\end{theorem}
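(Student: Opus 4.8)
The plan is to turn the maximum-length hypothesis into a rigid homogeneous basis, reduce the classification to a quadratic system coming from the Jacobi identity, and then normalize and separate cases.

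First I would use $l(L)=\dim L=n$ to fix a connected $\mathbb{Z}$-gradation $L=\bigoplus_i V_i$ realizing the length $n$, so that $k_t-k_1+1=n$. A short argument shows that every nonzero component is one-dimensional and that the occupied degrees fill an interval with no interior gaps; after a shift they are $1,2,\dots,n$, and I may choose a homogeneous basis $f_1,\dots,f_n$ with $\deg f_j=j$. Homogeneity, $[V_i,V_j]\subseteq V_{i+j}$, then forces
\begin{equation*}
[f_i,f_j]=c_{ij}\,f_{i+j}\ \ (i+j\le n),\qquad [f_i,f_j]=0\ \ (i+j> n),
\end{equation*}
with $c_{ij}=-c_{ji}$, so the whole algebra is encoded by the finitely many scalars $c_{ij}$.

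Next I would impose the Jacobi identity, which on this basis collapses to the quadratic relations
\begin{equation*}
c_{ij}\,c_{i+j,k}+c_{jk}\,c_{j+k,i}+c_{ki}\,c_{k+i,j}=0
\end{equation*}
for all $i,j,k$ with $i+j+k\le n$. Because the quasi-filiform condition fixes the index of nilpotency at $n-1$, the characteristic sequence is either $(n-2,2)$ or $(n-2,1,1)$; in either case there is a characteristic vector, say $f_1$, whose adjoint $\ad f_1$ carries a Jordan block of size $n-2$, forcing $c_{1,j}\neq 0$ along the bulk of the chain while the two or three generators occupy the lowest degrees. Feeding these chain relations into the quadratic system turns most equations into recursions that determine the higher $c_{ij}$ from a few low-degree ones.

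Then I would normalize. The only degree-preserving linear changes of basis are the diagonal rescalings $f_i\mapsto\lambda_i f_i$, under which $c_{ij}\mapsto(\lambda_i\lambda_j/\lambda_{i+j})\,c_{ij}$; I would use this torus to scale the surviving free constants to $0$ or $\pm 1$. Solving the reduced system and applying these normalizations produces the three infinite families $g^1_{(n,1)},g^2_{(n,1)},g^3_{(n,1)}$, together with the sporadic algebras $g^1_7,g^2_9,g^3_{11}$, which correspond to extra solutions of the quadratic system that close up only in the indicated dimensions. Finally I would check pairwise non-isomorphism using gradation-stable invariants—the characteristic sequence, the dimensions of the terms of the lower central series, and the number and degrees of the generators relative to the main chain—noting that the maximal-length gradation is canonical enough to make these data genuine isomorphism invariants. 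The hard part will be the case analysis inside the quadratic Jacobi system together with the control of the sporadic algebras: in small dimensions several degrees admit additional brackets that vanish generically, so the recursions acquire extra branches, and isolating these branches, proving that they yield honest Lie algebras precisely in dimensions $7$, $9$, $11$, and then separating them from the generic families by invariants is the delicate and calculation-heavy step.
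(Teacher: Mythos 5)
This theorem is not proved in the paper at all: it is quoted verbatim from the reference [G\'omez--Jim\'enez-Merch\'an--Reyes, \emph{Quasi-filiform Lie algebras of maximum length}, Linear Algebra Appl.\ 335 (2001)], as the sentence preceding the statement makes explicit. So there is no in-paper proof to compare yours against; the relevant comparison is with the cited source, and your outline is in fact the standard strategy used there: a connected gradation of length $n$ on an $n$-dimensional algebra forces all homogeneous components to be one-dimensional, the bracket is then encoded by scalars $c_{ij}$ with $[f_i,f_j]=c_{ij}f_{i+j}$, Jacobi becomes the quadratic system you write, and the torus of graded rescalings normalizes the surviving parameters. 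That part of your plan is sound.

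However, as written your text is a programme rather than a proof, and two steps are stated too loosely to stand. First, "after a shift they are $1,2,\dots,n$" is not a legal move: translating the support of a $\mathbb{Z}$-gradation by $s$ destroys the condition $[V_i,V_j]\subseteq V_{i+j}$ unless $s=0$, so you must instead argue directly which supports $\{k_1,\dots,k_1+n-1\}$ are compatible with nilpotency and with the position of the characteristic vector (this is exactly where the types encoded in the subscripts $(n,1)$ and the sporadic supports of $g^1_7$, $g^2_9$, $g^3_{11}$ come from, and it is a genuine case split, not a normalization). Second, the entire content of the theorem -- solving the quadratic Jacobi system in each admissible support, showing the extra branches close up only in dimensions $7$, $9$, $11$, and establishing pairwise non-isomorphism by invariants independent of the chosen gradation -- is deferred in your last paragraph. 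You correctly identify this as "the delicate and calculation-heavy step," but without it nothing is proved. Since the paper itself treats the statement as a known classification, the appropriate resolution is to cite [Gomez] rather than to reprove it; if you do want a self-contained proof, the missing case analysis must actually be carried out.
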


\section{$\frac12$-derivation of  quasi-filiform Lie algebras of maximum length}

In this section, we will derive and calculate $\frac12$-derivation of  quasi-filiform Lie algebras of maximum length.
We start with the following theorem.

\begin{theorem}\label{halfderiv1} Any $\frac 1 2$-derivations of the algebra $g^1_{(n,1)}$
has the form

\begin{itemize}
  \item for $n=5:$
  $$\begin{cases}
\varphi(e_1)=\alpha_1e_1+\alpha_2e_2+\alpha_3e_3+\alpha_4e_4+\alpha_5e_5, \\
\varphi(e_2)=\beta_1e_1+\beta_2e_2+\beta_3e_3+\beta_4e_4+\beta_5e_5,\\
\varphi(e_3)=\frac{1}{2}(\alpha_1+\beta_2)e_3+\frac{1}{2}\beta_3e_4-\frac{1}{2}\alpha_3e_5, \\
\varphi(e_4)=\frac{1}{4}(3\alpha_1+\beta_2)e_4+\frac{1}{2}\alpha_2e_5, \\
\varphi(e_5)=\frac{1}{2}\beta_1e_4+\frac{1}{4}(\alpha_1+3\beta_2)e_5, \\
  \end{cases}$$
  \item for $n\geq7:$
$$\begin{cases}
\varphi(e_1)=\sum\limits_{i=1}^{n}\alpha_ie_i, \\ \varphi(e_2)=\alpha_1e_2+\beta_{n-2}e_{n-2}+\beta_{n-1}e_{n-1}+\beta_ne_n, \\  \varphi(e_3)=\alpha_1e_3+\frac{1}{2}\beta_{n-2}e_{n-1}-\frac{1}{2}\alpha_{n-2}e_{n}, \\
\varphi(e_i)=\alpha_1e_i+\frac{(-1)^i}{2}\alpha_{n-i+1}e_n, \  4\leq i\leq n-1,\\
\varphi(e_n)=\alpha_1e_n.\end{cases}$$\end{itemize}
\end{theorem}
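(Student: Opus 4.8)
The plan is to pin down a $\frac12$-derivation $\varphi$ of $g^1_{(n,1)}$ (with multiplication as in Theorem~\ref{thm26}) by writing $\varphi(e_j)=\sum_{i=1}^{n}a^i_j e_i$ and imposing the defining identity (Definition~\ref{halfderiv}) on all pairs of basis vectors, which suffices by bilinearity. First I would make two structural reductions. The derived subalgebra is $[g^1_{(n,1)},g^1_{(n,1)}]=\langle e_3,\dots,e_n\rangle$ and the annihilator is $Ann(g^1_{(n,1)})=\langle e_{n-1},e_n\rangle$ (since $e_{n-1}$ and $e_n$ are central while $e_{n-2}$ is not); both are invariant under any $\frac12$-derivation by the observation recorded after Definition~\ref{halfderiv}. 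Hence $\varphi(e_i)\in\langle e_3,\dots,e_n\rangle$ for $i\ge 3$ and $\varphi(e_{n-1}),\varphi(e_n)\in\langle e_{n-1},e_n\rangle$, which already removes the low-degree coordinates from most images. Conceptually this is the shadow of the maximum-length $\mathbb{Z}$-gradation, which I would keep in mind as the backbone of the argument.

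The computational engine is the recursion furnished by $[e_1,e_i]=e_{i+1}$. For $2\le i\le n-2$ the identity gives
\[
\varphi(e_{i+1})=\tfrac12\bigl([\varphi(e_1),e_i]+[e_1,\varphi(e_i)]\bigr),
\]
so that $\varphi(e_3),\dots,\varphi(e_{n-1})$ are completely determined by $\varphi(e_1)$ and $\varphi(e_2)$, while $\varphi(e_n)$ is obtained separately from $[e_2,e_{n-2}]=e_n$. I would run this recursion explicitly from an arbitrary $\varphi(e_1)=\sum_{i=1}^{n}\alpha_i e_i$: each step raises degree by one, annihilates the newly appearing low-order coordinates, and funnels the surviving information into the top slots $e_{n-1},e_n$. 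A direct induction then produces the stated shape $\varphi(e_i)=\alpha_1 e_i+\frac{(-1)^i}{2}\alpha_{n-i+1}e_n$ for $4\le i\le n-1$, together with $\varphi(e_n)=\alpha_1 e_n$, the $e_n$-value coming out consistently from every antidiagonal relation $[e_i,e_{n-i}]=(-1)^i e_n$.

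Next I would impose the remaining relations to fix the free parameters. These are of two types: the vanishing brackets $[e_2,e_k]=0$ (for $k$ non-complementary) and $[e_1,e_{n-1}]=[e_1,e_n]=0$, and the antidiagonal relations $[e_i,e_{n-i}]=(-1)^i e_n$. Feeding the recursively computed images into the identity for these brackets forces every coordinate of $\varphi(e_2)$ except those on $e_2,e_{n-2},e_{n-1},e_n$ to vanish, identifies the surviving $e_2$-coordinate with $\alpha_1$, and matches the $e_{n-1}$-coordinate of $\varphi(e_3)$ with $\frac12\beta_{n-2}$, yielding exactly $\varphi(e_2)=\alpha_1 e_2+\beta_{n-2}e_{n-2}+\beta_{n-1}e_{n-1}+\beta_n e_n$ and $\varphi(e_3)=\alpha_1 e_3+\frac12\beta_{n-2}e_{n-1}-\frac12\alpha_{n-2}e_n$. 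One checks at the same time that none of these constraints touch $\varphi(e_1)$, so it remains free. Assembling the pieces gives the $n\ge 7$ formula, and a final short verification that each such map does satisfy the identity confirms the description is exact.

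The genuine dichotomy, and the cleanest way to explain the separate case $n=5$, comes from counting the antidiagonal relations: there are $\frac{n-3}{2}$ of them. Restricting the identity to the degree-preserving part $\varphi(e_i)=\lambda_i e_i$ gives the recursion $\lambda_{i+1}=\frac12(\lambda_1+\lambda_i)$ together with $\lambda_n=\frac12(\lambda_i+\lambda_{n-i})$ for each antidiagonal index. For $n\ge 7$ there are at least two such constraints, the system is over-determined and collapses to the single scalar $\lambda_i\equiv\alpha_1$; for $n=5$ there is only the one relation $[e_2,e_3]=e_5$, the over-determination disappears, $e_2$ behaves as a second free generator, and the diagonal entries persist as the two-parameter family $\frac12(\alpha_1+\beta_2),\ \frac14(3\alpha_1+\beta_2),\ \frac14(\alpha_1+3\beta_2)$ displayed in the statement. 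I would therefore handle $n=5$ by hand and $n\ge 7$ by the recursion above. The main obstacle is the bookkeeping: verifying that the large, over-determined linear system coming from all vanishing brackets is simultaneously solvable with precisely the listed parameters surviving, and correctly isolating the $n=5$ degeneration rather than absorbing it into the generic case.
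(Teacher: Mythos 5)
Your proposal follows essentially the same route as the paper's proof: write $\varphi(e_1),\varphi(e_2)$ in full generality, propagate via the recursion $[e_1,e_i]=e_{i+1}$ by induction, obtain $\varphi(e_n)$ from $[e_2,e_{n-2}]=e_n$, and then kill the extra parameters using the vanishing brackets $[e_2,e_i]=0$ and the antidiagonal relation $[e_3,e_{n-3}]=(-1)^3e_n$ (which exists only for $n\ge 7$, exactly your counting argument for why $\alpha_1=\beta_2$ is forced there but $\beta_2$ stays free when $n=5$). The preliminary reduction via invariance of the derived subalgebra and annihilator is a harmless addition not used in the paper, but otherwise the argument is the same.
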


\begin{proof}
It is easy to see that $g^1_{(n,1)}$ has $2$ generators. We use these generators to calculate of $\frac12$-derivation.
$$\varphi(e_1)=\sum\limits_{i=1}^{n}\alpha_ie_i,\quad \varphi(e_2)=\sum\limits_{i=1}^{n}\beta_ie_i.$$

Now consider the condition of $\frac 1 2$-derivation for the elements $e_1$ and $e_2:$
\begin{center}
$\varphi(e_3)=\varphi([e_1,e_2])=\frac{1}{2}([\varphi(e_1),e_2]+[e_1,\varphi(e_2)])$
\end{center}
\begin{center}
$=\frac{1}{2}([\sum\limits_{i=1}^{n}\alpha_ie_i,e_2]+[e_1,\sum\limits_{i=1}^{n}\beta_ie_i])
=\frac{1}{2}((\alpha_1+\beta_2)e_3+\sum\limits_{i=4}^{n-1}\beta_{i-1}e_i-\alpha_{n-2}e_n).$
\end{center}

We prove the following equality for $3\leq i\leq n-1$ by induction:
\begin{center}
$\varphi(e_i)=\frac{1}{2^{i-2}}\Big((2^{i-2}-1)\alpha_{1}+\beta_2\Big)e_{i}+\frac{1}{2^{i-2}}\sum\limits_{t=i+1}^{n-1}\beta_{t-i+2}e_t+\frac{(-1)^{i}}{2}\alpha_{n-i+1}e_n.$
\end{center}
If $i=3$, the relationship holds according to the above equality. Now, we prove that it is true for $i$ and $i+1$.
By considering the condition of $\frac 1 2$-derivation for the elements $e_1, e_i$ we have
\begin{center}
$\varphi(e_{i+1})=\varphi([e_1,e_i])=\frac{1}{2}([\varphi(e_1),e_i]+[e_1,\varphi(e_i)])$
\end{center}
\begin{center}
$=\frac{1}{2}\Big([\sum\limits_{k=1}^{n}\alpha_ke_k,e_i]+[e_1,\frac{1}{2^{i-2}}\Big((2^{i-2}-1)\alpha_{1}+\beta_2\Big)e_{i}+\frac{1}{2^{i-2}}\sum\limits_{t=i+1}^{n-1}\beta_{t-i+2}e_t+\frac{(-1)^{i}}{2}\alpha_{n-i+1}e_n]\Big)$
\end{center}
\begin{center}
$=\frac{1}{2}\Big(\alpha_{1}e_{i+1}+(-1)^{i+1}\alpha_{n-i}e_n+\frac{1}{2^{i-2}}\Big((2^{i-2}-1)\alpha_{1}+\beta_2\Big)e_{i+1}+\frac{1}{2^{i-2}}\sum\limits_{t=i+1}^{n-2}\beta_{t-i+2}e_{t+1}\Big)$
\end{center}
\begin{center}
$=\frac{1}{2^{i-1}}\Big((2^{i-1}-1)\alpha_{1}+\beta_2\Big)e_{i+1}+\frac{1}{2^{i-1}}\sum\limits_{t=i+2}^{n-1}\beta_{t-i+1}e_t+\frac{(-1)^{i+1}}{2}\alpha_{n-i}e_n.$
\end{center}

Now, consider the condition of $\frac 1 2$-derivation for the elements $e_2, e_{n-2}:$
\begin{center}
$\varphi(e_{n})=\varphi([e_2,e_{n-2}])=\frac{1}{2}([\varphi(e_2),e_{n-2}]+[e_2,\varphi(e_{n-2})])$
\end{center}
\begin{center}
$=\frac{1}{2}\Big([\sum\limits_{i=1}^{n}\beta_ie_i,e_{n-2}]+[e_2,\frac{1}{2^{n-4}}\Big((2^{n-4}-1)\alpha_{1}+\beta_2\Big)e_{n-2}+\frac{1}{2^{n-4}}\beta_{3}e_{n-1}+\frac{(-1)^{n-2}}{2}\alpha_{3}e_n]\Big)
$
\end{center}
\begin{center}
$=\frac{1}{2}\Big(\beta_{1}e_{n-1}+\beta_{2}e_n+\frac{1}{2^{n-4}}\Big((2^{n-4}-1)\alpha_{1}+\beta_2\Big)e_{n}\Big)=\frac{1}{2}\beta_{1}e_{n-1}+\frac{1}{2^{n-3}}\Big((2^{n-4}-1)\alpha_{1}+(2^{n-4}+1)\beta_2\Big)e_{n}.$
\end{center}
Thus, we obtain that
\begin{equation}\label{fien1}
\varphi(e_{n})=\frac{1}{2}\beta_{1}e_{n-1}+\frac{1}{2^{n-3}}\Big((2^{n-4}-1)\alpha_{1}+(2^{n-4}+1)\beta_2\Big)e_{n}.
\end{equation}
Now, we consider the condition of $\frac 1 2$-derivation for the elements $e_2$ and $e_i$, $3\leq i\leq n-3:$
\begin{center}
$0=\varphi([e_2,e_i])=\frac{1}{2}([\varphi(e_2),e_i]+[e_2,\varphi(e_i)])$
\end{center}
\begin{center}
$=\frac{1}{2}\Big([\sum\limits_{i=1}^{n}\beta_ie_i,e_i]+[e_2,\frac{1}{2^{i-2}}\Big((2^{i-2}-1)\alpha_{1}+\beta_2\Big)e_{i}+\frac{1}{2^{i-2}}\sum\limits_{t=i+1}^{n-1}\beta_{t-i+2}e_t+\frac{(-1)^{i}}{2}\alpha_{n-i+1}e_n]\Big)$
\end{center}
\begin{center}
$=\frac{1}{2}\Big([\beta_{1}e_{i+1}-(-1)^i\beta_{n-i}e_n+\frac{1}{2^{i-2}}\beta_{n-i}e_{n}\Big).$
\end{center}
By the comparison of coefficients at the basis elements we obtain that
$$\beta_1=\beta_i=0, \ 3\leq i\leq n-3.$$

By considering the condition of $\frac 1 2$-derivation for the elements $e_3$ and $e_{n-3}$ we have
\begin{center}
$\varphi([e_3,e_{n-3}])=\frac{1}{2}([\varphi(e_3),e_{n-3}]+[e_3,\varphi(e_{n-3})])$
\end{center}
\begin{center}
$=\frac{1}{2}\Big([\frac{1}{2}((\alpha_1+\beta_2)e_3+\beta_{n-2}e_{n-1}-\alpha_{n-2}e_n),e_{n-3}]+
[e_3,\frac{1}{2^{n-5}}\Big((2^{n-5}-1)\alpha_{1}+\beta_2\Big)e_{n-3}+\frac{1}{2}\alpha_{4}e_n]\Big)$
\end{center}
\begin{center}
$=\frac{1}{2}\Big(-\frac{1}{2}((\alpha_1+\beta_{2})e_{n}-\frac{1}{2^{n-5}}\Big((2^{n-5}-1)\alpha_{1}+\beta_2\Big)e_{n}\Big).$
\end{center}
On the other hand,
\begin{equation}\label{fien2}
\varphi([e_3,e_{n-3}])=-\varphi(e_n)=-\Big(\frac{1}{2^2}((\alpha_1+\beta_{2})+\frac{1}{2^{n-4}}\Big((2^{n-5}-1)\alpha_{1}+\beta_2\Big)\Big)e_{n}.
\end{equation}
By comparing the coefficients at the basis elements in expressions (\ref{fien1}) and (\ref{fien2}) we obtain that
$\alpha_{1}=\beta_2.$

This completes the proof of the theorem. \end{proof}

Now we study the $\frac 1 2$-derivation of the algebra $g^2_{(n,1)}.$

\begin{theorem}\label{halfderiv2} Any $\frac 1 2$-derivations of the algebra $g^2_{(n,1)}$ has the form
\begin{itemize}
  \item for $n=5:$
  $$\begin{cases}
\varphi(e_1)=\alpha_1e_1+\alpha_2e_2+\alpha_3e_3+\alpha_4e_4+\alpha_5e_5, \\
\varphi(e_2)=\beta_2e_2+\beta_3e_3+\beta_4e_4+\beta_5e_5,\\
\varphi(e_3)=\frac{1}{2}(\alpha_1+\beta_2)e_3+\frac{1}{2}(\beta_3-\alpha_5)e_4, \\
\varphi(e_4)=\frac{1}{4}(3\alpha_1+\beta_2)e_4, \\
\varphi(e_5)=-\alpha_2e_3+\gamma_4e_4+\frac{1}{2}(3\alpha_1-\beta_2)e_5, \\
  \end{cases}$$
  \item for $n=6:$
  $$\begin{cases}
\varphi(e_1)=\alpha_1e_1+\alpha_2e_2+\alpha_3e_3+\alpha_4e_4+\alpha_5e_5+\alpha_6e_6, \\
\varphi(e_2)=\alpha_1e_2-3\alpha_6e_3+\beta_4e_4+\beta_5e_5,\\
\varphi(e_3)=\alpha_1e_3-2\alpha_6e_4+\frac{1}{2}\beta_4e_5, \\
\varphi(e_4)=\alpha_1e_4-\frac{3}{2}\alpha_6e_5, \\
\varphi(e_5)=\alpha_1e_5, \\
\varphi(e_6)=-\alpha_2e_3-\alpha_3e_4+\gamma_5e_5+\alpha_1e_6, \\
  \end{cases}$$
  \item for $n\geq7:$
$$\begin{cases}
\varphi(e_1)=\sum\limits_{i=1}^{n-1}\alpha_ie_i, \\
\varphi(e_2)=\alpha_1e_2+\beta_{n-2}e_{n-2}+\beta_{n-1}e_{n-1}, \\
\varphi(e_3)=\alpha_1e_3+\frac{1}{2}\beta_{n-2}e_{n-1}, \\
\varphi(e_i)=\alpha_1e_i, \  4\leq i\leq n-1,\\
\varphi(e_n)=-\sum\limits_{i=3}^{n-2}\alpha_{i-1}e_i+\gamma_{n-1}e_{n-1}+\alpha_1e_n.\end{cases}$$\end{itemize}
\end{theorem}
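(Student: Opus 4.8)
The plan is to follow the same strategy as in the proof of the preceding theorem, adapting it to the two families of structure constants of $g^2_{(n,1)}$. First I would fix a generating set: every basis vector $e_3,\dots,e_{n-1}$ occurs as a bracket (through $[e_1,e_i]=e_{i+1}$ or $[e_i,e_n]=e_{i+2}$), whereas $e_1$, $e_2$ and $e_n$ never appear on the right-hand side of a defining relation. Hence $\{e_1,e_2,e_n\}$ generates $g^2_{(n,1)}$, and $[\mathfrak{L},\mathfrak{L}]=\langle e_3,\dots,e_{n-1}\rangle$. A $\frac12$-derivation $\varphi$ is therefore determined by its values on the three generators, so I would start from
\[
\varphi(e_1)=\sum_{i=1}^n\alpha_ie_i,\qquad \varphi(e_2)=\sum_{i=1}^n\beta_ie_i,\qquad \varphi(e_n)=\sum_{i=1}^n\gamma_ie_i,
\]
and immediately record, from the invariance of $[\mathfrak{L},\mathfrak{L}]$ noted after Definition~\ref{halfderiv}, that $\varphi(e_i)\in\langle e_3,\dots,e_{n-1}\rangle$ for $3\le i\le n-1$.

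The second step propagates $\varphi$ along the chain of $\ad e_1$. Applying the $\frac12$-derivation identity of Definition~\ref{halfderiv} to $\varphi(e_3)=\varphi([e_1,e_2])$ already mixes both families, since the $e_n$-component of $\varphi(e_1)$ meets $e_2$ through $[e_n,e_2]=-e_4$; comparing coefficients here yields the first relations $\beta_2=\alpha_1$, $\beta_3=\alpha_n$ and $\beta_4=\dots=\beta_{n-3}=0$. I would then prove by induction on $i$, exactly as before, a closed formula for $\varphi(e_i)$, $3\le i\le n-1$, from $[e_1,e_i]=e_{i+1}$, whose coefficient of $e_i$ carries the normalizing factor $\tfrac1{2^{i-2}}\big((2^{i-2}-1)\alpha_1+\beta_2\big)$; this factor collapses to $\alpha_1$ once $\beta_2=\alpha_1$ is installed.

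The decisive step imposes the remaining relations to pin down the coefficients. The vanishing conditions $0=\varphi([e_2,e_i])$ for $3\le i\le n-1$ eliminate all $\beta_i$ except $\beta_{n-2},\beta_{n-1}$ and force $\alpha_n=0$; applying the identity to the second family $[e_i,e_n]=e_{i+2}$ (in which $\varphi(e_i)$ and $\varphi(e_{i+2})$ are already known from the chain) produces the linear system for the coefficients of $\varphi(e_n)$, whose solution is $\gamma_i=-\alpha_{i-1}$ for $3\le i\le n-2$, $\gamma_1=\gamma_2=0$ and $\gamma_n=\alpha_1$, with $\gamma_{n-1}$ left undetermined. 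This is exactly how the distinctive shape $\varphi(e_n)=-\sum_{i=3}^{n-2}\alpha_{i-1}e_i+\gamma_{n-1}e_{n-1}+\alpha_1e_n$ emerges, together with $\varphi(e_i)=\alpha_1e_i$ for $4\le i\le n-1$.

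Finally, the cases $n=5$ and $n=6$ must be treated separately: for small $n$ the index ranges of the two families overlap, the $\ad e_1$-chain is too short for the generic induction, and some constraints of the generic case fail (notably $\alpha_n=0$, and for $n=5$ also $\beta_2=\alpha_1$), so additional free parameters survive; there I would simply evaluate the $\frac12$-derivation identity on each defining bracket by hand. I expect the main obstacle to be precisely this interaction of the two families: tracking how $[e_i,e_n]=e_{i+2}$ feeds back into the $\ad e_1$-chain without generating spurious or double-counted constraints, checking that the inductive normalizing factors genuinely reduce to $\alpha_1$, and correctly isolating the single surviving free parameter $\gamma_{n-1}$ in $\varphi(e_n)$.
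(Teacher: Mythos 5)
Your overall architecture coincides with the paper's: take $e_1,e_2,e_n$ as generators, push $\varphi$ along the chain $[e_1,e_i]=e_{i+1}$ by induction, then impose the remaining brackets, and treat $n=5,6$ by hand. However, two of the specific derivations you announce would fail if executed literally. First, the identity $\varphi(e_3)=\tfrac12([\varphi(e_1),e_2]+[e_1,\varphi(e_2)])$ cannot ``yield the first relations $\beta_2=\alpha_1$, $\beta_3=\alpha_n$, $\beta_4=\dots=\beta_{n-3}=0$'': since $e_3$ is not a generator, this identity simply \emph{defines} $\varphi(e_3)$, namely as $\tfrac12\big((\alpha_1+\beta_2)e_3+(\beta_3-\alpha_n)e_4+\sum_{i=5}^{n-1}\beta_{i-1}e_i\big)$; there is nothing to compare it against at that stage, so no constraint arises. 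Second, the conditions $0=\varphi([e_2,e_i])$ for $3\le i\le n-1$ do not eliminate the middle $\beta$'s and do not force $\alpha_n=0$: since $\varphi(e_i)\in\langle e_i,\dots,e_{n-1}\rangle$ and $e_2$ brackets nontrivially only with $e_1$ and $e_n$, each such identity reduces to $0=\beta_1e_{i+1}-\beta_ne_{i+2}$ and yields only $\beta_1=\beta_n=0$.

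The constraints you are missing actually come from the brackets involving $e_n$, which your plan reserves solely for determining the coefficients of $\varphi(e_n)$. Concretely, in the paper's proof: $0=\varphi([e_1,e_n])$ gives $\gamma_2=0$ and $\gamma_i=-\alpha_{i-1}$ for $3\le i\le n-2$; equating $\varphi([e_2,e_n])$ with the chain expression for $\varphi(e_4)$ kills $\beta_4,\dots,\beta_{n-3}$ and gives $\beta_3=-3\alpha_n$ (not $\beta_3=\alpha_n$), together with $\gamma_1=0$ and $2\gamma_n=3\alpha_1-\beta_2$; and equating $\varphi([e_3,e_n])$ with the chain expression for $\varphi(e_5)$ yields $\beta_2=\alpha_1$, $\beta_n=0$ and, for $n\ge7$, $\beta_3=\alpha_n=0$. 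So the second family of brackets must be used twice over --- once for the $\gamma$'s and once for the $\beta$'s and $\alpha_n$ --- and as written your plan leaves the elimination of $\beta_4,\dots,\beta_{n-3}$ and the vanishing of $\alpha_n$ unproved. With that reassignment of which identity produces which relation, the rest of your plan (the induction, the collapse of the normalizing factor $\tfrac1{2^{i-2}}((2^{i-2}-1)\alpha_1+\beta_2)$ to $\alpha_1$ once $\beta_2=\alpha_1$ is established, and the separate treatment of $n=5,6$) goes through exactly as in the paper.
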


\begin{proof} From table multiplication of the algebra  $g^2_{(n,1)}$ we conclude that $e_1, e_2$ and $e_{n}$ are the generator basis elements of
the algebra. We use these generators to calculate of $\frac12$-derivation:
\begin{center}
$\varphi(e_1)=\sum\limits_{i=1}^{n}\alpha_ie_i,\ \varphi(e_2)=\sum\limits_{i=1}^{n}\beta_ie_i,\ \varphi(x)=\sum\limits_{i=1}^{n}\gamma_ie_i.$
\end{center}
Now consider the condition of $\frac 1 2$-derivation for the elements $e_1$ and $e_2:$
\begin{center}
$\varphi(e_3)=\varphi([e_1,e_2])=\frac{1}{2}([\varphi(e_1),e_2]+[e_1,\varphi(e_2)])$
\end{center}
\begin{center}
$=\frac{1}{2}([\sum\limits_{i=1}^{n}\alpha_ie_i,e_2]+[e_1,\sum\limits_{i=1}^{n}\beta_ie_i])
=\frac{1}{2}\Big((\alpha_1+\beta_2)e_3+(\beta_3-\alpha_n)e_4+\sum\limits_{i=5}^{n-1}\beta_{i-1}e_i\Big).$
\end{center}

We prove the following equality for $3\leq i\leq n-2$ by induction:
\begin{center}
$\varphi(e_i)=\frac{1}{2^{i-2}}\Big((2^{i-2}-1)\alpha_{1}+\beta_2\Big)e_{i}+\frac{1}{2^{i-2}}\Big(\beta_{3}-(2^{i-2}-1)\alpha_n\Big)e_{i+1}+\frac{1}{2^{i-2}}\sum\limits_{t=i+2}^{n-1}\beta_{t-i+2}e_t$
\end{center}
and
\begin{center}
$\varphi(e_{n-1})=\frac{1}{2^{n-3}}\Big((2^{n-3}-1)\alpha_{1}+\beta_2\Big)e_{n-1}.$
\end{center}

If $i=3$, the relationship is fulfilled according to the above equality. Now, we prove that it is true for $i$ and $i+1$.
By considering the condition of $\frac 1 2$-derivation for the elements $e_1, e_i$ , we have the following:
\begin{center}
$\varphi(e_{i+1})=\varphi([e_1,e_i])=\frac{1}{2}([\varphi(e_1),e_i]+[e_1,\varphi(e_i)])$
\end{center}
\begin{center}
$=\frac{1}{2}\Big([\sum\limits_{k=1}^{n}\alpha_ke_k,e_i]+[e_1,\frac{1}{2^{i-2}}\Big((2^{i-2}-1)\alpha_{1}+\beta_2\Big)e_{i}+\frac{1}{2^{i-2}}\Big(\beta_{3}-(2^{i-2}-1)\alpha_n\Big)e_{i+1}+\frac{1}{2^{i-2}}\sum\limits_{t=i+2}^{n-1}\beta_{t-i+2}e_t\Big)$
\end{center}
\begin{center}
$=\frac{1}{2}\Big(\alpha_{1}e_{i+1}-\alpha_{n}e_{i+2}+\frac{1}{2^{i-2}}\Big((2^{i-2}-1)\alpha_{1}+\beta_2\Big)e_{i+1}+\frac{1}{2^{i-2}}\Big(\beta_{3}-(2^{i-2}-1)\alpha_n\Big)e_{i+2}$ \
$+\frac{1}{2^{i-2}}\sum\limits_{t=i+2}^{n-2}\beta_{t-i+2}e_{t+1}\Big)$
\end{center}
\begin{center}
$=\frac{1}{2^{i-1}}\Big((2^{i-1}-1)\alpha_{1}+\beta_2\Big)e_{i+1}+\frac{1}{2^{i-1}}\Big(\beta_3-(2^{i-1}-1)\alpha_{n}\Big)e_{i+2}+\frac{1}{2^{i-1}}\sum\limits_{t=i+3}^{n-1}\beta_{t-i+1}e_t.$
\end{center}

From $$0=\varphi([e_1,e_n])=\frac{1}{2}([\varphi(e_1),e_n]+[e_1,\varphi(e_n)])=\frac{1}{2}\Big([\sum\limits_{k=1}^{n}\alpha_ke_k,e_n]+[e_1,\sum\limits_{k=1}^{n}\gamma_ke_k]\Big)$$
$$=\frac{1}{2}\Big(\sum\limits_{k=2}^{n-3}\alpha_ke_{k+2}+\sum\limits_{k=2}^{n-2}\gamma_ke_{k+1}\Big)=\frac{1}{2}\Big(\sum\limits_{k=4}^{n-1}\alpha_{k-2}e_{k}+\sum\limits_{k=3}^{n-1}\gamma_{k-1}e_{k}\Big),$$ we obtain,
\begin{center}
$\gamma_2=0,\ \gamma_i=-\alpha_{i-1},\ 3\leq i\leq n-2.$
\end{center}

Now consider the condition of $\frac 1 2$-derivation for the elements $e_2, e_{n}:$
\begin{center}
$\varphi([e_2,e_{n}])=\frac{1}{2}([\varphi(e_2),e_{n}]+[e_2,\varphi(e_{n})])=\frac{1}{2}\Big([\sum\limits_{i=1}^{n}\beta_{i}e_i,e_{n}]+
[e_2,\sum\limits_{i=1}^{n}\gamma_{i}e_i]\Big)$
\end{center}
\begin{center}
$=\frac{1}{2}\Big(\sum\limits_{i=2}^{n-3}\beta_{i}e_{i+2}-\gamma_{1}e_3+\gamma_{n}e_4\Big)=\frac{1}{2}\Big(-\gamma_{1}e_3+(\beta_{2}+\gamma_{n})e_4+\sum\limits_{i=5}^{n-1}\beta_{i-2}e_{i}\Big).$
\end{center}
Thus,
\begin{center}
$\varphi([e_2,e_{n}])=\varphi(e_{4}).$
\end{center}
By Comparing the coefficients at the basis elements, we obtain that
$$\gamma_{1}=0, \ 2\gamma_n=3\alpha_1-\beta_2; \ \mbox{for} \ n\geq6 \ \mbox{we have} \ \beta_3=-3\alpha_n, \ \beta_i=0, \ 4\leq i\leq n-3.$$

By using the property of $\frac 1 2$-derivation for the products $[e_2,e_3]=0$ and $[e_3,e_n]=e_5$, we have

\begin{center}$\beta_1=0; \ \mbox{and for} \ n\geq 6 \ \mbox{we have} \ \beta_n=0, \ \beta_2=\alpha_1;$\end{center}
\begin{center}$\mbox{and for} \ n\geq 7 \ \mbox{we have} \ \beta_3=\alpha_n=0.$\end{center}

This completes the proof of the theorem. \end{proof}

Now, we will study the $\frac 1 2$-derivation of the algebra $g^3_{(n,1)}.$

\begin{theorem}\label{halfderiv3} Any $\frac 1 2$-derivations of the algebra $g^3_{(n,1)}$ has the form
\begin{itemize}
   \item for $n=7:$
  $$\begin{cases}
\varphi(e_1)=\alpha_1e_1+\alpha_2e_2+\alpha_3e_3+\alpha_4e_4+\alpha_5e_5+\alpha_6e_6, \\
\varphi(e_2)=\alpha_1e_2+4\alpha_2e_4+\beta_5e_5+\beta_6e_6,\\
\varphi(e_3)=\alpha_1e_3+2\alpha_2e_5+\frac{1}{2}(\beta_5-\alpha_3)e_6, \\
\varphi(e_4)=\alpha_1e_4+\frac{3}{2}\alpha_2e_6, \\
\varphi(e_5)=\alpha_1e_5, \ \varphi(e_6)=\alpha_1e_6, \\
\varphi(e_7)=-\alpha_2e_3-\alpha_3e_4-\alpha_4e_5+\gamma_6e_6+\alpha_1e_7, \\
  \end{cases}$$
  \item for $n\geq8:$
$$\begin{cases}
\varphi(e_1)=\alpha_1e_1+\sum\limits_{i=3}^{n-1}\alpha_ie_i, \\
\varphi(e_2)=\alpha_1e_2+\sum\limits_{i=5}^{n-3}\alpha_{i-2}e_i+\beta_{n-2}e_{n-2}+\beta_{n-1}e_{n-1}, \\
\varphi(e_3)=\alpha_1e_3+\frac{1}{2}(\beta_{n-2}-\alpha_{n-4})e_{n-1}, \\
\varphi(e_i)=\alpha_1e_i, \  4\leq i\leq n-1,\\
\varphi(e_n)=-\sum\limits_{i=4}^{n-2}\alpha_{i-1}e_i+\gamma_{n-1}e_{n-1}+\alpha_1e_n.\end{cases}$$\end{itemize}
\end{theorem}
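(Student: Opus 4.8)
The plan is to follow the same generators-and-induction scheme that settled $g^1_{(n,1)}$ and $g^2_{(n,1)}$. Reading off the multiplication table of $g^3_{(n,1)}$, one checks that $e_n$ never occurs as a bracket, so $e_1$, $e_2$ and $e_n$ generate the algebra; hence a $\frac12$-derivation $\varphi$ is completely determined by its values $\varphi(e_1)=\sum_{i=1}^n\alpha_ie_i$, $\varphi(e_2)=\sum_{i=1}^n\beta_ie_i$ and $\varphi(e_n)=\sum_{i=1}^n\gamma_ie_i$. First I would apply the defining identity of a $\frac12$-derivation (Definition \ref{halfderiv}) to the product $[e_1,e_2]=e_3$ to read off $\varphi(e_3)$ in terms of the $\alpha_i$ and $\beta_i$, its leading coefficient being $\tfrac12(\alpha_1+\beta_2)$.

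Next I would propagate this along the chain $[e_1,e_i]=e_{i+1}$, $2\le i\le n-2$, setting up an induction that produces a closed formula for $\varphi(e_i)$, $3\le i\le n-1$: a term on $e_i$ with coefficient $\frac{1}{2^{i-2}}\big((2^{i-2}-1)\alpha_1+\beta_2\big)$ together with a tail supported on the higher basis vectors, exactly parallel to Theorem \ref{halfderiv2}. The inductive step is a routine bracket computation carrying the formula from $i$ to $i+1$, and it is this formula that will ultimately be matched against the boundary values.

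The free parameters are then cut down by imposing the $\frac12$-derivation condition on every remaining defining bracket. The relation $[e_1,e_n]=0$ should force relations of the form $\gamma_i=-\alpha_{i-1}$ on a range of indices together with the vanishing of a low-index $\gamma$, just as for $g^2_{(n,1)}$. The new third family $[e_2,e_i]=e_{i+3}$, fed in alongside $[e_2,e_n]=e_4$ and $[e_i,e_n]=e_{i+2}$, produces the extra equations specific to $g^3_{(n,1)}$: these should annihilate the interior $\beta_i$, collapse the tails of $\varphi(e_i)$ for $4\le i\le n-1$ down to $\alpha_1e_i$, force $\beta_2=\alpha_1$, and in particular kill $\alpha_2$, leaving exactly the surviving parameters $\alpha_1,\alpha_3,\dots,\alpha_{n-1},\beta_{n-2},\beta_{n-1},\gamma_{n-1}$ that appear in the statement. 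Reconciling the coefficients on $e_{n-1}$ then yields the displayed top component $\tfrac12(\beta_{n-2}-\alpha_{n-4})$ of $\varphi(e_3)$, and the degenerate case $n=7$ would be recorded separately.

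The main obstacle I anticipate is bookkeeping rather than conceptual difficulty: $g^3_{(n,1)}$ carries the third family $[e_2,e_i]=e_{i+3}$ absent from $g^2_{(n,1)}$, so the conditions coming from these brackets overlap in index with those from $[e_1,e_i]$ and $[e_i,e_n]$, and one must verify that the resulting overdetermined linear system in the $\alpha_i,\beta_i,\gamma_i$ is consistent and leaves no spurious surviving parameters. Keeping the three index ranges aligned and isolating the boundary terms on $e_{n-1}$ and $e_n$, where the ranges collide, is the delicate part; it is also precisely what forces the small case $n=7$ (in which several ranges shrink or disappear) to be treated apart from the generic case $n\ge8$.
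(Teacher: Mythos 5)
Your plan coincides with the paper's own proof: it uses the generators $e_1,e_2,e_n$, the induction along $[e_1,e_i]=e_{i+1}$ producing the leading coefficient $\frac{1}{2^{i-2}}\big((2^{i-2}-1)\alpha_1+\beta_2\big)$, the relation $[e_1,e_n]=0$ to force $\gamma_2=0$ and $\gamma_i=-\alpha_{i-1}$, and the brackets $[e_2,e_n]=e_4$, $[e_2,e_3]=e_6$, $[e_3,e_n]=e_5$ to pin down the remaining parameters, with $n=7$ set aside exactly as the paper does. The one imprecision is that the interior coefficients $\beta_i$ ($5\le i\le n-3$) are not annihilated but identified with $\alpha_{i-2}$ by the $[e_2,e_n]=e_4$ condition --- which is precisely what produces the tail $\sum_{i=5}^{n-3}\alpha_{i-2}e_i$ in $\varphi(e_2)$ --- but your final list of free parameters agrees with the statement, so this is only loose wording rather than a gap.
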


\begin{proof}
The algebra $g^3_{(n,1)}$ has generators $e_1, e_2$ and $e_n$. Then, we have the following
\begin{center}
$\varphi(e_1)=\sum\limits_{i=1}^{n}\alpha_ie_i,\
\varphi(e_2)=\sum\limits_{i=1}^{n}\beta_ie_i,\ \varphi(e_n)=\sum\limits_{i=1}^{n}\gamma_ie_i.$
\end{center}
Now, consider the condition of $\frac 1 2$-derivation for the elements $e_2$ and $e_n$ we obtain
\begin{center}
$\varphi(e_3)=\varphi([e_1,e_2])=\frac{1}{2}([\varphi(e_1),e_2]+[e_1,\varphi(e_2)])=\frac{1}{2}([\sum\limits_{i=1}^{n}\alpha_ie_i,e_2]+[e_1,\sum\limits_{i=1}^{n}\beta_ie_i])$
\end{center}
\begin{center}
$=\frac{1}{2}\Big(\alpha_1e_3-\sum\limits_{i=3}^{n-4}\alpha_{i}e_{i+3}-\alpha_ne_4+\sum\limits_{i=2}^{n-2}\beta_{i}e_{i+1}\Big)$
\end{center}
\begin{center}
$=\frac{1}{2}\Big((\alpha_1+\beta_2)e_3+(\beta_3-\alpha_n)e_4+\beta_4e_5+\sum\limits_{i=6}^{n-1}(\beta_{i-1}-\alpha_{i-3})e_{i}\Big).$
\end{center}

We can prove the following equalities by induction:
\begin{center}
$\varphi(e_i)=\frac{1}{2^{i-2}}\Big((2^{i-2}-1)\alpha_{1}+\beta_2\Big)e_{i}+\frac{1}{2^{i-2}}\Big(\beta_{3}-(2^{i-2}-1)\alpha_n\Big)e_{i+1}$\end{center}
\begin{center}
$+\frac{1}{2^{i-2}}\Big((2^{i-2}-2)\alpha_2+\beta_{4}\Big)e_{i+2}+\frac{1}{2^{i-2}}\sum\limits_{t=i+3}^{n-1}(\beta_{t-i+2}-\alpha_{t-i})e_t$
\end{center}
for $3\leq i\leq n-4$ and
\begin{center}
$\varphi(e_{n-3})=\frac{1}{2^{n-5}}\Big((2^{n-5}-1)\alpha_{1}+\beta_2\Big)e_{n-3}+\frac{1}{2^{n-5}}\Big(\beta_{3}-(2^{n-5}-1)\alpha_n\Big)e_{n-2}$\end{center}
\begin{center}
$+\frac{1}{2^{n-5}}\Big((2^{n-5}-2)\alpha_2+\beta_{4}\Big)e_{n-1},$
\end{center}
\begin{center}
$\varphi(e_{n-2})=\frac{1}{2^{n-4}}\Big((2^{n-4}-1)\alpha_{1}+\beta_2\Big)e_{n-2}+\frac{1}{2^{n-4}}\Big(\beta_{3}-(2^{n-4}-1)\alpha_n\Big)e_{n-1},$
\end{center}
\begin{center}
$\varphi(e_{n-1})=\frac{1}{2^{n-3}}\Big((2^{n-3}-1)\alpha_{1}+\beta_2\Big)e_{n-1}.$
\end{center}

From $$0=\varphi([e_1,e_n])=\frac{1}{2}([\varphi(e_1),e_n]+[e_1,\varphi(e_n)])=\frac{1}{2}\Big([\sum\limits_{k=1}^{n}\alpha_ke_k,e_n]+[e_1,\sum\limits_{k=1}^{n}\gamma_ke_k]\Big)$$
$$=\frac{1}{2}\Big(\sum\limits_{k=2}^{n-3}\alpha_ke_{k+2}+\sum\limits_{k=2}^{n-2}\gamma_ke_{k+1}\Big)=\frac{1}{2}\Big(\sum\limits_{k=4}^{n-1}\alpha_{k-2}e_{k}+\sum\limits_{k=3}^{n-1}\gamma_{k-1}e_{k}\Big),$$ we obtain that
\begin{center}
$\gamma_2=0,\ \gamma_i=-\alpha_{i-1},\ 3\leq i\leq n-2.$
\end{center}

Now, we consider the condition of $\frac 1 2$-derivation for the elements $e_2, e_{n}:$
\begin{center}
$\varphi([e_2,e_{n}])=\frac{1}{2}([\varphi(e_2),e_{n}]+[e_2,\varphi(e_{n})])=\frac{1}{2}\Big([\sum\limits_{i=1}^{n}\beta_{i}e_i,e_{n}]+
[e_2,\sum\limits_{i=1}^{n}\gamma_{i}e_i]\Big)$
\end{center}
\begin{center}
$=\frac{1}{2}\Big(\sum\limits_{i=2}^{n-3}\beta_{i}e_{i+2}-\gamma_{1}e_3+\sum\limits_{i=3}^{n-4}\gamma_{i}e_{i+3}+\gamma_{n}e_4\Big)$
\end{center}
\begin{center}
$=\frac{1}{2}\Big(-\gamma_{1}e_3+(\beta_{2}+\gamma_{n})e_4+\beta_{3}e_5+\sum\limits_{i=6}^{n-1}(\beta_{i-2}-\gamma_{i-3})e_{i}\Big).$
\end{center}
Thus, we have
\begin{center}
$\varphi([e_2,e_{n}])=\varphi(e_{4})=\frac{1}{4}(3\alpha_{1}+\beta_2)e_{4}+\frac{1}{4}(\beta_{3}-3\alpha_n)e_{5}+\frac{1}{4}(2\alpha_2+\beta_{4})e_{6}+\frac{1}{4}\sum\limits_{t=7}^{n-1}(\beta_{t-2}-\alpha_{t-4})e_t.$
\end{center}
Compare the coefficients at the basis elements, we obtain that
$$\gamma_{1}=0, \ 2\gamma_n=3\alpha_1-\beta_2, \  \beta_3=-3\alpha_n, \ \beta_4=4\alpha_2, \  \beta_i=\alpha_{i-2}, \ 5\leq i\leq n-3.$$

By using the property of $\frac{1}{2}$-derivation for the products $[e_2,e_3]=e_6$ and $[e_3,e_n]=e_5$, we have

\begin{center}$\beta_1=0, \ \beta_n=0, \ \beta_2=\alpha_1;$\end{center}
\begin{center}$\beta_3=\alpha_n=0, \ \mbox{and for} \ n\geq 8 \ \mbox{we have} \ \beta_4=\alpha_2=0.$\end{center}

This completes the proof of the theorem. \end{proof}

The following theorems, we present an analogous descriptions of $\frac 1 2$-derivation for quasi-filiform Lie algebras of maximum length $g^1_{7}, g^2_{9}$ and $g^3_{11}$.

\begin{theorem}\label{halfderiv4} Any $\frac 1 2$-derivations of the algebra $g^1_{7}$ has the form
  $$\begin{cases}
\varphi(e_1)=\alpha_1e_1+\alpha_3e_3+\alpha_4e_4+\alpha_5e_5+\alpha_6e_6+\alpha_7e_7, \\
\varphi(e_2)=\alpha_1e_2-\frac{1}{3}\alpha_3e_4+\beta_5e_5+\beta_6e_6+\beta_7e_7,\\
\varphi(e_3)=\alpha_1e_3-\frac{2}{3}\alpha_3e_5+\frac{1}{2}(\beta_5-\alpha_4)e_6-\frac{1}{2}\alpha_5e_7, \\
\varphi(e_4)=\alpha_1e_4-\frac{1}{3}\alpha_3e_6+\frac{1}{2}\alpha_4e_7, \\
\varphi(e_5)=\alpha_1e_5-\frac{1}{2}\alpha_3e_7, \\
\varphi(e_6)=\alpha_1e_6, \\
\varphi(e_7)=\alpha_1e_7. \\
  \end{cases}$$
\end{theorem}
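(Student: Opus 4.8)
The plan is to exploit that $g^1_7$ is generated as a Lie algebra by $e_1$ and $e_2$: reading off the multiplication table, $e_{i+1}=[e_1,e_i]$ for $2\le i\le 5$, so $e_3,\dots,e_6$, together with $e_7=[e_2,e_5]$, all lie in the subalgebra generated by $e_1,e_2$. Consequently any $\frac12$-derivation $\varphi$ is completely determined by $\varphi(e_1)$ and $\varphi(e_2)$, and I would begin by writing $\varphi(e_1)=\sum_{i=1}^{7}\alpha_ie_i$ and $\varphi(e_2)=\sum_{i=1}^{7}\beta_ie_i$ with undetermined scalars. Before computing, I would record the two structural restrictions noted after Definition~\ref{halfderiv}: both $[\mathfrak{L},\mathfrak{L}]=\langle e_3,\dots,e_7\rangle$ and $\mathrm{Ann}(\mathfrak{L})=\langle e_6,e_7\rangle$ are $\varphi$-invariant. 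Hence $\varphi(e_3),\dots,\varphi(e_7)$ can have no $e_1,e_2$ components and $\varphi(e_6),\varphi(e_7)\in\langle e_6,e_7\rangle$, which cuts down the bookkeeping substantially.

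Next I would propagate along the $\mathrm{ad}(e_1)$-chain. Applying the defining identity of Definition~\ref{halfderiv} to the products $e_3=[e_1,e_2]$, then $e_4=[e_1,e_3]$, $e_5=[e_1,e_4]$ and $e_6=[e_1,e_5]$ in turn, each step expresses $\varphi(e_{i+1})=\tfrac12\bigl([\varphi(e_1),e_i]+[e_1,\varphi(e_i)]\bigr)$ in terms of the value at the previous step, so a short finite recursion yields explicit expressions for $\varphi(e_3),\dots,\varphi(e_6)$ as linear combinations of the $\alpha_i$ and $\beta_i$. Because the algebra is only $7$-dimensional, this is a direct computation rather than the genuine induction on $n$ needed for the infinite families $g^1_{(n,1)},g^2_{(n,1)},g^3_{(n,1)}$; here the recursion merely organises the four computations cleanly.

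The constraints on the scalars then come from imposing the identity on the remaining defining relations that are not of the form $e_{i+1}=[e_1,e_i]$, namely the secondary brackets $[e_2,e_3]=e_5$, $[e_2,e_4]=e_6$, $[e_2,e_5]=e_7$, $[e_3,e_4]=-e_7$, together with the annihilator relations $[e_1,e_6]=[e_1,e_7]=0$ and $[e_2,e_6]=[e_2,e_7]=0$. For each such relation I would equate the value of $\varphi$ on the left-hand side, already known from the previous paragraph, with $\tfrac12\bigl([\varphi(x),y]+[x,\varphi(y)]\bigr)$ and collect coefficients of the basis vectors. This produces an overdetermined linear system in the $\alpha_i,\beta_i$; solving it should force $\beta_1=\beta_3=0$, $\beta_2=\alpha_1$, $\alpha_2=0$ and $\beta_4=-\tfrac13\alpha_3$, after which substitution into the expressions for $\varphi(e_3),\dots,\varphi(e_6)$ gives exactly the stated formulas, with $\alpha_1,\alpha_3,\alpha_4,\alpha_5,\alpha_6,\alpha_7,\beta_5,\beta_6,\beta_7$ remaining free.

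The main obstacle is the consistency of this overdetermined system rather than any single computation. In particular $e_7$ is reached by the two independent relations $[e_2,e_5]=e_7$ and $[e_3,e_4]=-e_7$, so the two resulting expressions for $\varphi(e_7)$ must agree; comparing them is where the rational coefficients such as $-\tfrac13$ are pinned down, and it is the step most prone to arithmetic slips. I would therefore finish by verifying that the displayed $\varphi$ indeed satisfies the $\frac12$-derivation identity on every pair of basis elements; by bilinearity it suffices to check it on the defining brackets of $g^1_7$, which simultaneously confirms that the listed family is closed under the identity and that no further relation imposes an extra constraint.
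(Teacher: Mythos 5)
Your proposal is correct and follows essentially the same route the paper takes for the infinite families $g^i_{(n,1)}$ (the paper states this theorem without an explicit proof, presenting it as analogous): express $\varphi$ on the generators $e_1,e_2$, propagate along the $\mathrm{ad}(e_1)$-chain to get $\varphi(e_3),\dots,\varphi(e_6)$, and then impose the remaining brackets to pin down the coefficients. One minor inaccuracy in your commentary: carrying out the computation shows that $\beta_2=\alpha_1$, $\beta_4=-\tfrac13\alpha_3$ and $\alpha_2=\beta_3=0$ are in fact forced by the relations $[e_2,e_3]=e_5$ and $[e_2,e_4]=e_6$, while the two routes to $e_7$ via $[e_2,e_5]=e_7$ and $[e_3,e_4]=-e_7$ agree automatically; this does not affect the validity of your plan, since you impose all of these relations anyway.
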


\begin{theorem}\label{2halfderiv12} Any $\frac 1 2$-derivations of the algebra $g^2_{9}$ has the form
  $$\begin{cases}
\varphi(e_1)=\alpha_1e_1+\alpha_5e_5+\alpha_6e_6+\alpha_7e_7+\alpha_8e_8+\alpha_9e_9, \\
\varphi(e_2)=\alpha_1e_2+\frac{1}{3}\alpha_5e_6+\beta_7e_7+\beta_8e_8+\beta_9e_9,\\
\varphi(e_3)=\alpha_1e_3-\frac{4}{3}\alpha_5e_7+\frac{1}{2}(\beta_7-5\alpha_6)e_8-\frac{1}{2}\alpha_7e_9, \\
\varphi(e_4)=\alpha_1e_4+\frac{1}{3}\alpha_5e_8+\frac{1}{2}\alpha_6e_9, \\
\varphi(e_5)=\alpha_1e_5-\frac{1}{2}\alpha_5e_9, \\
\varphi(e_i)=\alpha_1e_i, \ \ 6\leq i\leq 9. \\
  \end{cases}$$
\end{theorem}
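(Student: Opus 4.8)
The plan is to follow the same generator-and-induction strategy used in the proofs of Theorems \ref{halfderiv2} and \ref{halfderiv3}, since $g^2_9$ is structurally a specific instance of the $g^2_{(n,1)}$ family (with extra structure constants $3,5,-2$). First I would identify the generators of $g^2_9$ by inspecting its multiplication table; from the relations $[e_1,e_i]=e_{i+1}$, $[e_2,e_i]=\dots$, and the top-degree brackets $[e_i,e_{9-i}]=(-1)^ie_9$, the algebra is generated by $e_1,e_2$ and $e_9$. Accordingly I set $\varphi(e_1)=\sum_{i=1}^{9}\alpha_ie_i$, $\varphi(e_2)=\sum_{i=1}^{9}\beta_ie_i$, and $\varphi(e_9)=\sum_{i=1}^{9}\gamma_ie_i$, so that every other $\varphi(e_i)$ is determined by repeated application of the $\frac12$-derivation identity \eqref{halfderiv1}.

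The core computation is to apply \eqref{halfderiv1} to the defining products in a well-chosen order. I would first compute $\varphi(e_3)=\varphi([e_1,e_2])$ to initialize the induction, then establish a closed-form expression for $\varphi(e_i)$, $4\le i\le 5$, by induction on $i$ using $e_{i+1}=[e_1,e_i]$, carefully tracking the contributions of $[e_2,\cdot]$ and the skew brackets $[e_3,\cdot]$, $[e_i,e_{9-i}]$. The explicit structure constants $3$ and $5$ in $[e_2,e_5]=3e_7$, $[e_2,e_6]=5e_8$ and the $-2$ in $[e_3,e_i]=-2e_{i+3}$ will enter these recursions and must be carried through accurately; they are the source of the coefficient $\frac13\alpha_5$, $-\frac43\alpha_5$, and the $5\alpha_6$ appearing in the claimed form. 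Next, applying \eqref{halfderiv1} to $0=[e_1,e_9]$ yields linear relations forcing $\gamma_2=0$ and $\gamma_i=-\alpha_{i-1}$ on an appropriate range, exactly as in the proof of Theorem \ref{halfderiv3}; then evaluating $\varphi([e_2,e_9])$ and matching it against the already-derived $\varphi(e_4)$ pins down $\gamma_1$, $\gamma_9$ and several $\beta_i$ in terms of the $\alpha$'s.

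Finally I would close the system by imposing \eqref{halfderiv1} on a few strategically chosen \emph{vanishing} products (those $[e_i,e_j]$ that equal $0$ in $g^2_9$, e.g.\ $[e_2,e_3]$, and on mixed brackets such as $[e_3,e_9]$ and $[e_4,e_9]$). Each such identity forces additional $\beta_i,\gamma_i$ to vanish or to equal $\alpha_1$, and comparing two independent expressions for $\varphi(e_9)$ forces the diagonal normalization $\alpha_1=\beta_2$ and the vanishing of the lower-index off-diagonal coefficients of $\varphi(e_1),\varphi(e_2)$ (this is why only $\alpha_1,\alpha_5,\dots,\alpha_9$ and $\beta_7,\beta_8,\beta_9$ survive). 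Collecting all surviving free parameters then reproduces the stated normal form. The main obstacle I anticipate is purely bookkeeping: because $g^2_9$ has several non-unit structure constants and overlapping bracket relations, the induction step and the coefficient comparisons involve many simultaneous linear constraints, and the real work is organizing them so that no relation is double-counted and the dependencies among $\alpha,\beta,\gamma$ are resolved in a consistent order; the algebraic content beyond this careful coefficient-matching is routine.
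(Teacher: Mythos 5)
There is a genuine gap: you have misidentified the generating set of $g^2_{9}$ and, as a consequence, you impose bracket relations that do not hold in this algebra. The very relations you cite, $[e_i,e_{9-i}]=(-1)^ie_{9}$ for $2\leq i\leq 4$, show that $e_9=[e_2,e_7]$ lies in $[\mathfrak{L},\mathfrak{L}]$, so $g^2_{9}$ is generated by $e_1,e_2$ alone — structurally it is a cousin of $g^1_{(n,1)}$ (where the paper explicitly notes there are only two generators), not of $g^2_{(n,1)}$, where $e_n$ genuinely never occurs as the value of a bracket. The statement itself confirms this: $\varphi(e_9)=\alpha_1e_9$ carries no free parameters $\gamma_i$, whereas in Theorems \ref{halfderiv2} and \ref{halfderiv3} the image of the third generator retains several free coefficients. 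Because of this misidentification, your closing steps rest on false multiplication rules: in $g^2_{9}$ the element $e_9$ is central (no bracket $[e_k,e_9]$ is nonzero), so $[e_1,e_9]=0$ forces $\gamma_k=0$ for $2\le k\le 7$ rather than $\gamma_i=-\alpha_{i-1}$, and $[e_2,e_9]=0$, not $e_4$, so there is nothing to ``match against $\varphi(e_4)$''. These relations were transplanted from the $g^2_{(n,1)}$ and $g^3_{(n,1)}$ proofs and simply do not apply here.

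The repair is to follow the pattern of the proof of Theorem \ref{halfderiv1} instead: start from $\varphi(e_1)=\sum\alpha_ie_i$, $\varphi(e_2)=\sum\beta_ie_i$ only, propagate along $e_{i+1}=[e_1,e_i]$ for $3\le i\le 7$, obtain $\varphi(e_9)$ from $\varphi([e_2,e_7])$, and then close the system by comparing the three expressions for $\varphi(e_9)$ coming from $[e_2,e_7]=e_9$, $[e_3,e_6]=-e_9$, $[e_4,e_5]=e_9$, together with the constraints from the nontrivial products $[e_2,e_i]$ ($3\le i\le 6$, with the structure constants $1,1,3,5$), $[e_3,e_4]=-2e_7$, $[e_3,e_5]=-2e_8$, and the vanishing products such as $[e_1,e_8]=[e_2,e_8]=0$. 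Your remarks about carrying the constants $3$, $5$, $-2$ through the recursion are on target, but as written the argument would not produce the stated normal form because the linear system you propose to solve is the wrong one.
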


\begin{theorem}\label{2halfderiv3} Any $\frac 1 2$-derivations of the algebra $g^3_{11}$ has the form
  $$\begin{cases}
\varphi(e_1)=\alpha_1e_1+\alpha_6e_6+\alpha_7e_7+\alpha_8e_8+\alpha_9e_9+\alpha_{10}e_{10}+\alpha_{11}e_{11}, \\
\varphi(e_2)=\alpha_1e_2-\alpha_6e_7-\alpha_7e_8+\beta_9e_9+\beta_{10}e_{10}+\beta_{11}e_{11},\\
\varphi(e_3)=\alpha_1e_3+\frac{1}{2}\beta_9e_{10}-\frac{1}{2}\alpha_9e_{11}, \\
\varphi(e_4)=\alpha_1e_4+\frac{1}{2}\alpha_7e_{10}+\frac{1}{2}\alpha_8e_{11}, \\
\varphi(e_5)=\alpha_1e_5-\frac{1}{2}\alpha_6e_{10}-\frac{1}{2}\alpha_7e_{11}, \\
\varphi(e_6)=\alpha_1e_6+\frac{1}{2}\alpha_6e_{11}, \\
\varphi(e_i)=\alpha_1e_i, \ \ 7\leq i\leq 11. \\
  \end{cases}$$
\end{theorem}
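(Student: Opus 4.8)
The plan is to follow the inductive scheme already used for Theorems \ref{halfderiv2} and \ref{halfderiv3}. Since $[g^3_{11},g^3_{11}]$ is spanned by $e_3,\dots,e_{11}$, the quotient by the derived subalgebra is spanned by the classes of $e_1$ and $e_2$, so these two elements generate $g^3_{11}$ and a $\frac12$-derivation $\varphi$ is completely determined by $\varphi(e_1)$ and $\varphi(e_2)$. I would start by writing $\varphi(e_1)=\sum_{i=1}^{11}\alpha_ie_i$ and $\varphi(e_2)=\sum_{i=1}^{11}\beta_ie_i$, treating the $22$ coefficients as unknowns, and then use the defining identity of Definition \ref{halfderiv} to express every $\varphi(e_k)$ through them.

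First I would propagate $\varphi$ along the principal chain $[e_1,e_i]=e_{i+1}$, $2\le i\le 9$. Evaluating the $\frac12$-derivation condition on $[e_1,e_2]=e_3$ fixes $\varphi(e_3)$, and an induction on $i$ identical in structure to the ones carried out above then produces closed expressions for $\varphi(e_4),\dots,\varphi(e_{10})$: each $\varphi(e_i)$ is a dyadic-rational multiple of $e_i$ (with leading coefficient built from $\alpha_1,\beta_2$) plus higher-weight corrections carrying the remaining $\alpha_j,\beta_j$. Because $e_{11}$ lies outside this chain, I would obtain $\varphi(e_{11})$ separately by applying the condition to $[e_2,e_9]=e_{11}$, using the already-computed $\varphi(e_9)$.

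Next I would turn every remaining structure relation $[e_a,e_b]=c\,e_k$ with $a\ge2$ into a consistency equation. Substituting the expressions for $\varphi(e_a),\varphi(e_b),\varphi(e_k)$ into the $\frac12$-derivation condition and comparing coefficients at each basis vector gives a linear system in the $\alpha$'s and $\beta$'s. Working through $[e_2,e_3]=e_5$, $[e_2,e_4]=e_6$, $[e_2,e_6]=-e_8$, $[e_2,e_7]=-e_9$, $[e_3,e_4]=e_7$, $[e_3,e_5]=e_8$, $[e_3,e_7]=-e_{10}$, $[e_4,e_5]=e_9$, $[e_4,e_6]=e_{10}$, together with $[e_3,e_8]=-e_{11}$, $[e_4,e_7]=e_{11}$ and $[e_5,e_6]=-e_{11}$, I expect to obtain $\beta_1=0$, $\beta_2=\alpha_1$, the vanishing $\alpha_2=\alpha_3=\alpha_4=\alpha_5=0$ and $\beta_3=\beta_4=\beta_5=\beta_6=0$, and the identifications $\beta_7=-\alpha_6$, $\beta_8=-\alpha_7$, leaving $\alpha_1,\alpha_6,\dots,\alpha_{11},\beta_9,\beta_{10},\beta_{11}$ free. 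Substituting these back into the formulas for $\varphi(e_3),\dots,\varphi(e_6)$ and $\varphi(e_7),\dots,\varphi(e_{11})$ should yield exactly the asserted expressions.

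The difficulty here is organisational rather than conceptual: the relations overlap heavily --- each of $e_7,\dots,e_{11}$ is the value of several distinct brackets --- so the linear system is strongly overdetermined and the many weight-graded coefficient comparisons must be tracked without slips. In particular the sign-bearing relations $[e_2,e_6]=-e_8$, $[e_2,e_7]=-e_9$, $[e_3,e_7]=-e_{10}$ and $[e_i,e_{11-i}]=(-1)^ie_{11}$ require care, since it is precisely the interplay of their signs that forces $\beta_7=-\alpha_6$ and $\beta_8=-\alpha_7$ and fixes the half-integer coefficients appearing in $\varphi(e_3),\dots,\varphi(e_6)$.
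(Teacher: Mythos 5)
Your proposal is correct and is exactly the computation the paper intends: the paper states Theorem \ref{2halfderiv3} without proof, describing it as analogous to the arguments for $g^1_{(n,1)}$, $g^2_{(n,1)}$ and $g^3_{(n,1)}$, and your plan (express $\varphi$ on the generators $e_1,e_2$, propagate along the chain $[e_1,e_i]=e_{i+1}$ by induction, then impose the remaining bracket relations as consistency equations) is precisely that analogous argument. The relations you list do force the stated vanishings and identifications (e.g.\ comparing the $e_4$-coefficient in the two computations of $\varphi(e_5)$ from $[e_2,e_3]=e_5$ already gives $\beta_1=0$), so no essential step is missing.
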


\section{Transposed Poisson structure for  quasi-filiform Lie algebras of maximum length}

In this section, we will describe all transposed Poisson algebra structures on quasi-filiform Lie algebras of maximum length.

\begin{theorem} Let $(g^1_{(5,1)}, \cdot, [-,-])$ be a transposed Poisson algebra structure defined on the Lie algebra
$g^1_{(5,1)}$. Then the multiplication of $(g^1_{(5,1)}, \cdot)$ has the following form:

${\bf TP}_1(g^1_{(5,1)}): \ e_1\cdot e_1=\alpha_{4}e_4+\alpha_{5}e_5, \ e_1\cdot e_2= \beta_{4}e_4+\beta_{5}e_5, \ e_2\cdot e_2= \beta_{9}e_4+\beta_{10}e_5;$

${\bf TP}_2(g^1_{(5,1)}): \ \begin{cases}
e_1\cdot e_1=e_3+\alpha_{4}e_4+\alpha_{5}e_5, \ e_1\cdot e_2=\beta_{3}e_3+\beta_{4}e_4+ \beta_{5}e_5, \ e_1\cdot e_3=\frac{1}{2}\beta_{3}e_4-\frac{1}{2}e_5, \\[1mm]
e_2\cdot e_2= \beta_{3}^2e_3+ \beta_{9}e_4+\beta_{10}e_5, \ e_2\cdot e_3=\frac{1}{2}\beta_{3}^2e_4-\frac{1}{2}\beta_{3}e_5;
\end{cases}$

${\bf TP}_3(g^1_{(5,1)}):\ \begin{cases}
e_1\cdot e_1= \alpha_{1}e_1+e_2+ \alpha_{3}e_3+\alpha_{4}e_4+\alpha_{5}e_5, \\[1mm]
e_1\cdot e_2=-\frac{1}{4} \left(\alpha _1-\beta _2\right){}^2e_1+\beta_{2}e_2-\frac{1}{2} \alpha _3 \left(\alpha_1-\beta _2\right)e_3+\beta_{4}e_4+\beta_{5}e_5, \\[1mm]
e_1\cdot e_3=\frac{1}{2}(\alpha_{1}+\beta_{2})e_3 -\frac{1}{4} \alpha _3 \left(\alpha _1-\beta _2\right)e_4-\frac{1}{2}\alpha_{3}e_5, \\[1mm]
e_1\cdot e_4=\frac{1}{4}(3\alpha_{1}+ \beta_{2})e_4 +\frac{1}{2}e_5, \\[1mm]
e_1\cdot e_5=-\frac{1}{8} \left(\alpha _1-\beta _2\right){}^2e_4+\frac{1}{4}(\alpha_{1}+3\beta_{2})e_5, \\[1mm]
e_2\cdot e_2= -\frac{1}{4} \left(\alpha _1-\beta _2\right){}^2 \beta _2e_1+\frac{1}{4} \left(-\alpha _1^2-2 \alpha _1 \beta _2+3 \beta _2^2\right)e_2+ \frac{1}{4} \alpha _3 \left(\alpha _1-\beta _2\right){}^2 e_3+\\
\ \ \ \ \ \ \ \ \ \ \ \ \ \ \ +\frac{1}{8} \Big(\alpha _1^2 \left(\alpha _5 \beta _2-\beta _5\right)-\beta_2(2 \alpha _4 \beta _2-\alpha _5 \beta _2^2-10 \beta _4+\beta _2 \beta_5)+\\[1mm]
\ \ \ \ \ \ \ \ \ \ \ \ \ \ \ +2 \alpha _1 \left(\alpha _4 \beta _2-\alpha _5 \beta _2^2-\beta _4+\beta _2 \beta _5\right)\Big)e_4+\\[1mm]
\ \ \ \ \ \ \ \ \ \ \ \ \ \ \ +\frac{1}{4} \left(2 \beta _4-2 \alpha _4 \beta _2-3 \alpha _5 \beta _2^2+3 \alpha _1 \left(\alpha _5 \beta _2-\beta _5\right)+7 \beta _2 \beta _5\right)e_5, \\[1mm]
e_2\cdot e_3=\frac{1}{4}(\beta_2^2-\alpha_1^2)e_3 +\frac{1}{8} \alpha _3 \left(\alpha _1-\beta _2\right){}^2e_4+\frac{1}{4}\alpha_3(\alpha_1-\beta_2)e_5, \\[1mm]
e_2\cdot e_4=\frac{1}{4} \alpha _1 \left(\beta _2-\alpha _1\right)e_4+ \frac{1}{2}\beta_{2}e_5,\\[1mm]
e_2\cdot e_5=-\frac{1}{8} \left(\alpha _1-\beta _2\right){}^2 \beta _2e_4-\frac{1}{4} \left(\alpha _1^2+\alpha _1 \beta _2-2 \beta _2^2\right)e_5, \\[1mm]
e_3\cdot e_3=\frac{1}{8}(\beta_2^2-\alpha_1^2)e_4-\frac{1}{4}(\alpha_{1}+\beta_{2})e_5;
\end{cases}$\\
where the transposed Poisson algebra has its products with respect to the bracket $[-,-]$, and the remaining products are equal to zero.

\end{theorem}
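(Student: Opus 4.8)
The plan is to apply Lemma~\ref{lemma1}: a transposed Poisson structure on $g^1_{(5,1)}$ is exactly a commutative associative multiplication $\cdot$ such that, for every fixed $z$, the operator $L_z = z \cdot (-)$ is a $\frac{1}{2}$-derivation of the Lie algebra. Since Theorem~\ref{halfderiv1} (case $n=5$) already gives the complete list of $\frac{1}{2}$-derivations, the strategy is to force each left-multiplication operator into that parametric form and then impose commutativity and associativity.

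**Exploiting the generators.**
First I would observe that $g^1_{(5,1)}$ is generated by $e_1, e_2$, so by commutativity the entire multiplication table is determined by the three products $e_1\cdot e_1$, $e_1\cdot e_2$, $e_2\cdot e_2$ together with the requirement that $L_{e_1}$ and $L_{e_2}$ be $\frac{1}{2}$-derivations. Concretely, write $L_{e_1}$ in the general $\frac{1}{2}$-derivation form of Theorem~\ref{halfderiv1} with parameters $(\alpha_i,\beta_i)$, and similarly $L_{e_2}$ with its own parameters; the columns $L_{e_1}(e_1), L_{e_1}(e_2)$ and $L_{e_2}(e_1), L_{e_2}(e_2)$ are then read off. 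The first batch of constraints comes from commutativity: $e_1\cdot e_2 = e_2\cdot e_1$ means $L_{e_1}(e_2)=L_{e_2}(e_1)$, which ties the two parameter sets together and already collapses many free coefficients. Propagating the $\frac{1}{2}$-derivation formulas for $L_{e_1}$, $L_{e_2}$ then fixes the products $e_i\cdot e_j$ on the remaining basis vectors $e_3,e_4,e_5$ in terms of the surviving parameters.

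**Imposing associativity.**
The decisive step is associativity. I would test the associator $(e_i\cdot e_j)\cdot e_k = e_i\cdot(e_j\cdot e_k)$ on the generating products, the most informative being $(e_1\cdot e_1)\cdot e_2 = e_1\cdot(e_1\cdot e_2)$ and $(e_1\cdot e_2)\cdot e_2 = e_1\cdot (e_2\cdot e_2)$, together with $(e_1\cdot e_1)\cdot e_1 = e_1\cdot(e_1\cdot e_1)$. Because multiplication lands in low-degree components of the graded algebra, these associativity equations become polynomial relations among the coefficients $\alpha_1,\beta_2,\alpha_3,\alpha_4,\alpha_5,\beta_4,\beta_5,\dots$. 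I expect these relations to split the solution space into the three disjoint families ${\bf TP}_1, {\bf TP}_2, {\bf TP}_3$ according to whether the leading coefficient of $e_1\cdot e_1$ in the $e_2$- or $e_3$-direction vanishes: the degenerate branch where $\cdot$ maps everything into the center $\langle e_4,e_5\rangle$ yields ${\bf TP}_1$; the branch with a nonzero $e_3$-component in $e_1\cdot e_1$ (normalized to $1$) yields ${\bf TP}_2$; and the branch with a nonzero $e_2$-component (normalized to $1$) yields the rich family ${\bf TP}_3$.

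**The main obstacle.**
The genuine difficulty is neither conceptual nor in setting up the equations, but in the bookkeeping: the ${\bf TP}_3$ family carries expressions that are quadratic in $(\alpha_1-\beta_2)$ and bilinear in the remaining parameters, so the associativity constraints produce a coupled nonlinear system whose consistent solution must reproduce the intricate coefficients displayed in $e_2\cdot e_2$ and $e_2\cdot e_3$. I would organize this by first solving the linear constraints to eliminate dependent variables, substituting into the associativity identities, and then verifying that the remaining free parameters $\alpha_1,\beta_2,\alpha_3,\alpha_4,\alpha_5,\beta_4,\beta_5$ are exactly those appearing in the statement. The final check is to confirm associativity holds identically on all triples, not merely on the generating ones, which follows once the operators $L_z$ are $\frac{1}{2}$-derivations and the generating associators vanish, since both sides are then determined by the same derivation-compatibility recursion.
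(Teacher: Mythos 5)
Your overall strategy coincides with the paper's: invoke Lemma \ref{lemma1} together with the $n=5$ case of Theorem \ref{halfderiv1} to put every left-multiplication operator $\varphi_{e_i}$ into the parametric $\frac{1}{2}$-derivation form, use commutativity $\varphi_{e_i}(e_j)=\varphi_{e_j}(e_i)$ to collapse the parameters to a table governed by $\alpha_1,\dots,\alpha_5,\beta_1,\dots,\beta_{10}$, and then impose associativity on a few well-chosen triples. Up to that point your plan reproduces the paper's computation.

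There is, however, a concrete gap in how you justify the trichotomy and the normalizations. The three families ${\bf TP}_1$, ${\bf TP}_2$, ${\bf TP}_3$ contain coefficients set equal to $1$, and this is only legitimate after a change of basis of the Lie algebra $g^1_{(5,1)}$; moreover the case distinction must be invariant under all such changes of basis, or the list is neither complete nor non-redundant. You propose to branch on ``whether the leading coefficient of $e_1\cdot e_1$ in the $e_2$- or $e_3$-direction vanishes,'' but that single coefficient is not an invariant: the paper shows (via the linear system \eqref{eq1}, whose matrices have determinant $(\Delta_{1,2})^3\neq 0$) that the correct invariant for the ${\bf TP}_3$ branch is the vanishing of the entire tuple $x=(\alpha_1,\alpha_2,\beta_1,\beta_2,\beta_6,\beta_7)$, i.e.\ the $e_1$- and $e_2$-components of all three products $e_1\cdot e_1$, $e_1\cdot e_2$, $e_2\cdot e_2$, and similarly $y=(\alpha_3,\beta_3,\beta_8)$ for the ${\bf TP}_2$ branch. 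For instance, a structure with $\alpha_2=0$ but $\beta_7\neq 0$ still belongs to the ${\bf TP}_3$ family after an admissible basis change, which your criterion would misclassify. Without computing how these tuples transform under the general basis change $e_1'=\sum A_je_j$, $e_2'=\sum B_je_j$ (with the induced expressions for $e_3',e_4',e_5'$), you can neither conclude that exactly three families arise nor that the displayed normalizations $\alpha_2'=1$, $\alpha_3'=1$ are achievable. This basis-change analysis is the part of the paper's proof your proposal omits, and it is needed; the rest of your outline (the associativity relations such as $\beta_3^2-\alpha_3\beta_8=0$ in the degenerate branch and the polynomial system yielding the coefficients of $e_2\cdot e_2$ in ${\bf TP}_3$) matches the paper.
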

\begin{proof} Let $(g^1_{(5,1)}, \cdot, [-,-])$ be a transposed Poisson algebra structure defined on the Lie algebra $g^1_{(5,1)}$. Then
for any element of $x \in g^1_{(5,1)} $, we have the operator of multiplication $\varphi_x(y) = x \cdot y$ is a $\frac12$-derivation. Hence, for $1 \leq i \leq 5$ we derive by Theorem \ref{halfderiv1}
$$\begin{cases}
\varphi_{e_i}(e_1)=\alpha_{i,1}e_1+\alpha_{i,2}e_2+\alpha_{i,3}e_3+\alpha_{i,4}e_4+\alpha_{i,5}e_5, \\
\varphi_{e_i}(e_2)=\beta_{i,1}e_1+\beta_{i,2}e_2+\beta_{i,3}e_3+\beta_{i,4}e_4+\beta_{i,5}e_5, \\
\varphi_{e_i}(e_3)=\frac{1}{2}(\alpha_{i,1}+\beta_{i,2})e_3+\frac{1}{2}\beta_{i,3}e_4-\frac{1}{2}\alpha_{i,3}e_5, \\
\varphi_{e_i}(e_4)=\frac{1}{4}(3\alpha_{i,1}+\beta_{i,2})e_4+\frac{1}{2}\alpha_{i,2}e_5, \\
\varphi_{e_i}(e_5)=\frac{1}{2}\beta_{i,1}e_4+\frac{1}{4}(\alpha_{i,1}+3\beta_{i,2})e_5. \\
 \end{cases}$$

By considering the property of $\varphi_{e_i}(e_j)=e_i\cdot e_j=e_j\cdot e_i=\varphi_{e_j}(e_i)$, we obtain the following restrictions:

\begin{longtable}{llllll}
   $ \{e_1, e_2\},$ & $\Rightarrow$ & $\alpha_{2,1}=\beta_{1,1}, \
\alpha_{2,2}=\beta_{1,2}, \  \alpha_{2,3}=\beta_{1,3}, \ \alpha_{2,4}=\beta_{1,4}, \ \alpha_{2,5}=\beta_{1,5},$ \\

$\{e_1, e_3\},$ & $\Rightarrow$ & $\alpha_{3,1}=\alpha_{3,2}=0, \ \alpha_{3,3}=\frac{1}{2}(\alpha_{1,1}+ \beta_{1,2}), \ \alpha_{3,4}=\frac{1}{2}\beta_{1,3}, \ \alpha_{3,5}=-\frac{1}{2}\alpha_{1,3}, $ \\

   $ \{e_1, e_4\},$ & $\Rightarrow$ & $\alpha_{4,1}=\alpha_{4,2}=\alpha_{4,3}=0, \ \alpha_{4,4}=\frac{1}{4}(3\alpha_{1,1}+ \beta_{1,2}), \ \alpha_{4,5}=\frac{1}{2}\alpha_{1,2}, $ \\

    $\{e_1, e_5\},$ & $\Rightarrow$ & $\alpha_{5,1}=\alpha_{5,2}=\alpha_{5,3}=0, \ \alpha_{5,4}=\frac{1}{2}\beta_{1,1}, \ \alpha_{5,5}=\frac{1}{4}(\alpha_{1,1}+3\beta_{1,2}), $ \\

 $\{e_2, e_3\},$ & $\Rightarrow$ & $\beta_{3,1}=\beta_{3,2}=0, \  \beta_{3,3}=\frac{1}{2}(\beta_{1,1}+\beta_{2,2}), \  \beta_{3,4}=\frac{1}{2}\beta_{2,3}, \ \beta_{3,5}=-\frac{1}{2}\beta_{1,3},  $\\
 
$ \{e_2, e_4\},$ & $\Rightarrow$ & $\beta_{4,1}=\beta_{4,2}=\beta_{4,3}=0, \   \beta_{4,4}=\frac{1}{4}(3\beta_{1,1}+\beta_{2,2}), \ \beta_{4,5}=\frac{1}{2}\beta_{1,2}, $ \\

$ \{e_2, e_5\},$ & $\Rightarrow $& $\beta_{5,1}=\beta_{5,2}=\beta_{5,3}=0, \   \beta_{5,4}=\frac{1}{2}\beta_{2,1}, \ \beta_{5,5}=\frac{1}{4}(\beta_{1,1}+ 3\beta_{2,2}).$
\end{longtable}

Thus, we have the following table of commutative multiplications of the transposed Poisson algebra structure defined on the Lie algebra
$g^1_{(5,1)}$:

$$\begin{cases}
e_1\cdot e_1= \alpha_{1}e_1+\alpha_{2}e_2+ \alpha_{3}e_3+\alpha_{4}e_4+\alpha_{5}e_5, \\

e_1\cdot e_2= \beta_{1}e_1+\beta_{2}e_2+ \beta_{3}e_3+\beta_{4}e_4+\beta_{5}e_5, \\

e_1\cdot e_3=\frac{1}{2}(\alpha_{1}+ \beta_{2})e_3 +\frac{1}{2}\beta_{3}e_4-\frac{1}{2}\alpha_{3}e_5, \\

e_1\cdot e_4=\frac{1}{4}(3\alpha_{1}+ \beta_{2})e_4 +\frac{1}{2}\alpha_{2}e_5, \\

e_1\cdot e_5=\frac{1}{2}\beta_{1}e_4+\frac{1}{4}(\alpha_{1}+3\beta_{2})e_5, \\

e_2\cdot e_2= \beta_{6}e_1+\beta_{7}e_2+ \beta_{8}e_3+\beta_{9}e_4+\beta_{10}e_5, \\

e_2\cdot e_3=\frac{1}{2}(\beta_{1}+ \beta_{7})e_3+\frac{1}{2}\beta_{8}e_4-\frac{1}{2}\beta_{3}e_5, \\

e_2\cdot e_4=\frac{1}{4}(3\beta_{1}+\beta_{7})e_4+ \frac{1}{2}\beta_{2}e_5,\\

e_2\cdot e_5=\frac{1}{2}\beta_{6}e_4+\frac{1}{4}(\beta_{1}+3\beta_{7})e_5, \\
e_3\cdot e_3=\frac{1}{4}(\beta_{1}+\beta_{7})e_4-\frac{1}{4}(\alpha_{1}+\beta_{2})e_5, \\

\end{cases}$$

By using the multiplication of Lie algebra $g^1_{(5,1)}$ , we consider the general basis change and express the new basis elements $\{e'_1, e'_2, e'_3, e'_4, e'_5\}$  via the basis elements $\{e_1, e_2, e_3, e_4, e_5\}$ as follows:

$$e_1'=\sum\limits_{j=1}^{5}A_je_j, \ e_2'=\sum\limits_{j=1}^{5}B_je_j, \
e_3'=\Delta_{1,2} e_3+\Delta_{1,3}e_4+\Delta_{2,3}e_5,$$
$$e_4'=\Delta_{1,2} (A_1e_4+A_2e_5), \ e_5'=\Delta_{1,2} (B_1e_4+B_2e_5),$$
where $\Delta_{i,j}=A_iB_j-A_jB_i$ and $\Delta_{1,2}\neq0.$

Consider the multiplication $e_1'\cdot e_1'= \alpha_{1}'e_1'+\alpha_{2}'e_2'+ \alpha_{3}'e_3'+\alpha_{4}'e_4'+\alpha_{5}'e_5':$

$$e_1'\cdot e_1'=(\sum\limits_{j=1}^{5}A_je_j)\cdot (\sum\limits_{j=1}^{5}A_je_j)$$
$$=(A_1^2\alpha_{1}+ 2A_1A_2\beta_{1}+A_2^2\beta_{6})e_1+(A_1^2\alpha_{2}+ 2A_1A_2\beta_{2}+A_2^2\beta_{7})e_2+(*).$$
On the other hand, we have
$$\alpha_{1}'e_1'+\alpha_{2}'e_2'+ \alpha_{3}'e_3'+\alpha_{4}'e_4'+\alpha_{5}'e_5'=
\alpha_{1}'(\sum\limits_{j=1}^{5}A_je_j)+ \alpha_{2}'(\sum\limits_{j=1}^{5}B_je_j)+ (*)$$
$$=(A_1\alpha_{1}'+B_1\alpha_{2}')e_1+(A_2\alpha_{1}'+B_2\alpha_{2}')e_2+(*).$$

Now, we consider the multiplication $e_1'\cdot e_2'= \beta_{1}'e_1'+\beta_{2}'e_2'+ \beta_{3}'e_3'+\beta_{4}'e_4'+\beta_{5}'e_5':$

$$e_1'\cdot e_2'=(\sum\limits_{j=1}^{5}A_je_j)\cdot (\sum\limits_{j=1}^{5}B_je_j)=$$$$=(A_1B_1\alpha_{1}+ (A_1B_2+A_2B_1)\beta_{1}+A_2B_2\beta_{6})e_1+(A_1B_1\alpha_{2}+ (A_1B_2+A_2B_1)\beta_{2}+A_2B_2\beta_{7})e_2+ (*).$$
On the other hand, we get
$$\beta_{1}'e_1'+\beta_{2}'e_2'+ \beta_{3}'e_3'+\beta_{4}'e_4'+\beta_{5}'e_5'=
\beta_{1}'(\sum\limits_{j=1}^{5}A_je_j)+ \beta_{2}'(\sum\limits_{j=1}^{5}B_je_j)+ (*)$$
$$=(A_1\beta_{1}'+B_1\beta_{2}')e_1+(A_2\beta_{1}'+B_2\beta_{2}')e_2+(*).$$

Finally, we consider the multiplication $e_2'\cdot e_2'= \beta_{6}'e_1'+\beta_{7}'e_2'+ \beta_{8}'e_3'+\beta_{9}'e_4'+\beta_{10}'e_5':$

$$e_2'\cdot e_2'=(\sum\limits_{j=1}^{5}B_je_j)\cdot (\sum\limits_{j=1}^{5}B_je_j)=$$$$=(B_1^2\alpha_{1}+ 2B_1B_2\beta_{1}+B_2^2\beta_{6})e_1+ (B_1^2\alpha_{2}+ 2B_1B_2\beta_{2}+B_2^2\beta_{7})e_2+(*).$$
Thus, we obtain
$$\beta_{6}'e_1'+\beta_{7}'e_2'+ \beta_{8}'e_3'+\beta_{9}'e_4'+\beta_{10}'e_5'=
\beta_{6}'(\sum\limits_{j=1}^{5}A_je_j)+ \beta_{7}'(\sum\limits_{j=1}^{5}B_je_j)+ (*)$$
$$=(A_1\beta_{6}'+B_1\beta_{7}')e_1+(A_2\beta_{6}'+B_2\beta_{7}')e_2+(*).$$
where $(*)\in Span\langle e_3, e_4, e_5\rangle.$  By verifying all the above multiplications in the basis elements $e_1$ and $e_2$, we obtain the relations between parameters $x'=(\alpha_1',\alpha_2',\beta_1',\beta_2', \beta_6',\beta_7')$ and $x=(\alpha_1,\alpha_2,\beta_1,\beta_2, \beta_6,\beta_7)$:

\begin{equation}\label{eq1}
 \begin{cases}
A_1\alpha_{1}'+B_1\alpha_{2}'=A_1^2\alpha_{1}+ 2A_1A_2\beta_{1}+A_2^2\beta_{6},\\
A_2\alpha_{1}'+B_2\alpha_{2}'=A_1^2\alpha_{2}+ 2A_1A_2\beta_{2}+A_2^2\beta_{7}, \\
A_1\beta_{1}'+B_1\beta_{2}'=A_1B_1\alpha_{1}+ (A_1B_2+A_2B_1)\beta_{1}+A_2B_2\beta_{6},\\
A_2\beta_{1}'+B_2\beta_{2}'=A_1B_1\alpha_{2}+ (A_1B_2+A_2B_1)\beta_{2}+A_2B_2\beta_{7},\\
A_1\beta_{6}'+B_1\beta_{7}'=B_1^2\alpha_{1}+ 2B_1B_2\beta_{1}+B_2^2\beta_{6},\\
A_2\beta_{6}'+B_2\beta_{7}'=B_1^2\alpha_{2}+ 2B_1B_2\beta_{2}+B_2^2\beta_{7}.\\
 \end{cases}
\end{equation}

We can write the system \eqref{eq1} as follows:
$$x'\mathbb{A}=x\mathbb{B},$$
where $\det \mathbb{A}=\det \mathbb{B}=(\Delta_{1,2})^3.$ This means that the system \eqref{eq1} has only a zero solution $x'=\textbf{0}$ if and only if $x=\textbf{0}$.

So we have the following possibility cases.

\begin{enumerate}
    \item Let $x=\textbf{0}$. Then, from the identity $e_1\cdot (e_1\cdot e_2)=(e_1\cdot e_1)\cdot e_2$, we derive $\beta _3^2-\alpha _3 \beta _8=0$. Again, by considering the products of $e_1'\cdot e_1'=\alpha_{3}'e_3'+\alpha_{4}'e_4'+\alpha_{5}'e_5', e_1'\cdot e_2'=\beta_{3}'e_3'+\beta_{4}'e_4'+\beta_{5}'e_5'$ and  $e_2'\cdot e_2'=\beta_{8}'e_3'+\beta_{9}'e_4'+\beta_{10}'e_5'$, we have the following expressions:
    $$\begin{array}{l}
    \alpha_3'=\frac{A_1^2\alpha_3+2A_1A_2\beta_3+A_2^2\beta_8}{\Delta_{1,2}},\\[1mm]
    \beta_3'=\frac{A_1B_1\alpha_3+(A_1B_2+A_2B_1)\beta_3+A_2B_2\beta_8}{\Delta_{1,2}},\\[1mm]
    \beta_8'=\frac{B_1^2\alpha_3+2B_1B_2\beta_3+B_2^2\beta_8}{\Delta_{1,2}}.\end{array}$$
From the above expressions we can conclude that $y=(\alpha_3,\beta_3,\beta_8)$ is zero invariant.
 \begin{enumerate}
     \item If $y=\textbf{0}$, then we have the algebra ${\bf TP}_1(g^1_{(5,1)}).$
     \item If $y\neq\textbf{0}$, then  without loss of generality, we assume that $\alpha_3'=1$. Thus, we derive $\beta_8=\beta_3^2$ and obtain the algebra ${\bf TP}_2(g^1_{(5,1)}).$
 \end{enumerate}

\item Let $x\neq\textbf{0}$. Then, the system \eqref{eq1} can be written as follows:
$$\begin{array}{l}
\alpha_1'=\frac{A_1^2B_2\alpha_1+2A_1A_2B_2\beta_1+ A_2^2B_2\beta_6-A_1^2B_1\alpha_2-2A_1A_2B_1\beta_2-A_2^2B_1\beta_7}{\Delta_{1,2}},\\[1mm]
\alpha_2'=\frac{A_1^2A_2\alpha_1+2A_1A_2^2\beta_1+ A_2^3\beta_6-A_1^3\alpha_2-2A_1^2A_2\beta_2-A_1A_2^2\beta_7}{-\Delta_{1,2}},\\[1mm]
\beta_1'=\frac{A_1B_1B_2\alpha_1+(A_1B_2+A_2B_1)B_2\beta_1+A_2B_2^2\beta_6-A_1B_1^2\alpha_2-(A_1B_2+A_2B_1)B_1\beta_2-A_2B_1B_2\beta_7}{\Delta_{1,2}},\\[1mm]
\beta_2'=\frac{A_1A_2B_1\alpha_1+(A_1B_2+A_2B_1)A_2\beta_1+A_2^2B_2\beta_6-A_1^2B_1\alpha_2-(A_1B_2+A_2B_1)A_1\beta_2-A_1A_2B_2\beta_7}{-\Delta_{1,2}},\\[1mm]
\beta_6'=\frac{B_1^2B_2\alpha_1+2B_1B_2^2\beta_1+ B_2^3\beta_6-B_1^3\alpha_2-2B_1^2B_2\beta_2-B_1B_2^2\beta_7}{\Delta_{1,2}},\\[1mm]
\beta_7'=\frac{A_2B_1^2\alpha_1+2A_2B_1B_2\beta_1+ A_2B_2^2\beta_6-A_1B_1^2\alpha_2-2A_1B_1B_2\beta_2-A_1B_2^2\beta_7}{-\Delta_{1,2}}.\end{array}$$
It can be shown that $\alpha_2'=1$ and by considering the associative identity $x\cdot (y\cdot z)=(x\cdot y)\cdot z$, we obtain the following restrictions on structure constants:
$$\begin{array}{lll}
\{e_5,e_1,e_1\}, & \Rightarrow & \beta _6=\beta _1 \beta _2, \ \beta _7=\frac{1}{4} \left(3 \beta _2^2-2 \alpha _1 \beta _2-\alpha _1^2\right), \\[1mm]
\{e_1,e_1,e_3\}, & \Rightarrow & \beta _1=-\frac{1}{4} \left(\alpha _1-\beta_2\right)^2, \ \beta_3=\frac{1}{2}\alpha_3 \left(\beta_2-\alpha_1\right),\  \beta_8= \frac{1}{4}\alpha _3\left(\alpha _1-\beta _2\right)^2,\\[1mm]
\{e_2,e_1,e_1\}, & \Rightarrow & \beta _{10}= \frac{1}{4} \Big(-2 \alpha _4 \beta _2-3 \alpha _5 \beta _2^2+2 \beta _4+3 \alpha _1 \left(\alpha _5 \beta _2-\beta _5\right)+7 \beta _2 \beta _5\Big),  \\[1mm]
&  & \beta _9=\frac{1}{8} \Big(\alpha _1^2 \left(\alpha _5 \beta _2-\beta _5\right)+\beta _2 \left(-2 \alpha _4 \beta _2+\alpha _5 \beta _2^2+10 \beta _4-\beta _2 \beta _5\right)+\\[1mm]
 &  &  \ \ \ \ \ \ \ +2 \alpha _1 \left(\alpha _4 \beta _2-\alpha _5 \beta _2^2-\beta _4+\beta _2 \beta _5\right)\Big). \\[1mm]
\end{array}$$
Hence, the algebra ${\bf TP}_3(g^1_{(5,1)})$  is obtained.

\end{enumerate}

\end{proof}

\begin{theorem} Let $(g^1_{(n,1)}, \cdot, [-,-])$ be a transposed Poisson algebra structure defined on the Lie algebra
$g^1_{(n,1)}$ and $n\geq 7$. Then, the multiplication of $(g^1_{(n,1)}, \cdot)$ has the following form:

${\bf TP}_1(g^1_{(n,1)}):\ \begin{cases}
e_1\cdot e_1=\sum\limits_{j=4}^{n}\alpha_{j}e_j, \ e_1\cdot e_2=\beta_{1}e_{n-2}+\beta_{2}e_{n-1}+\beta_{3}e_n, \\[1mm]
e_1\cdot e_3=\frac{1}{2}\beta_{1}e_{n-1}-\frac{1}{2}\alpha_{n-2}e_{n}, \ e_1\cdot e_j=\frac{(-1)^j}{2}\alpha_{n-j+1}e_n, \  4\leq j\leq n-3, \\[1mm]
e_2\cdot e_2=\beta_{4}e_{n-2}+\beta_{5}e_{n-1}+\beta_{6}e_n, \ e_2\cdot e_3=\frac{1}{2}\beta_{4}e_{n-1}-\frac{1}{2}\beta_{1}e_{n};
\end{cases}
$

${\bf TP}_2(g^1_{(n,1)}):\ \begin{cases}
e_1\cdot e_1=e_3+ \sum\limits_{j=4}^{n}\alpha_{j}e_j, \
e_1\cdot e_2=\beta_{1}e_{n-2}+\beta_{2}e_{n-1}+\beta_{3}e_n, \\[1mm]
e_1\cdot e_3=\frac{1}{2}\beta_{1}e_{n-1}-\frac{1}{2}\alpha_{n-2}e_{n}, \
e_1\cdot e_j=\frac{(-1)^j}{2}\alpha_{n-j+1}e_n, \  4\leq j\leq n-3, \\[1mm]
e_1\cdot e_{n-2}=-\frac{1}{2}e_n, \
e_2\cdot e_2=\beta_{4}e_{n-1}+\beta_{5}e_n, \
e_2\cdot e_3=-\frac{1}{2}\beta_{1}e_{n};
\end{cases}
$

${\bf TP}_3(g^1_{(n,1)}):\ \begin{cases}
e_1\cdot e_1=e_2+ \sum\limits_{j=3}^{n}\alpha_{j}e_j, \
e_1\cdot e_2=2\beta_{1}e_{n-1}+\beta_{2}e_n, \\[1mm]
e_1\cdot e_3=-\frac{1}{2}\alpha_{n-2}e_{n}, \
e_1\cdot e_j=\frac{(-1)^j}{2}\alpha_{n-j+1}e_n, \  4\leq j\leq n-2, \\
e_1\cdot e_{n-1}=\frac{1}{2}e_n, \
e_2\cdot e_2=\beta_{1}e_n;
\end{cases}
$\\
where the transposed Poisson algebra has its products with respect to the bracket $[-,-]$, and the remaining products are equal to zero.

\end{theorem}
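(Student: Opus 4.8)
The plan is to mirror the method used for $g^1_{(5,1)}$, now feeding in the $n\ge 7$ branch of the $\frac12$-derivation classification, Theorem~\ref{halfderiv1}. By Lemma~\ref{lemma1}, for each basis vector $e_i$ the left multiplication $\varphi_{e_i}(y)=e_i\cdot y$ is a $\frac12$-derivation of $g^1_{(n,1)}$, so it must take the shape dictated by that theorem. First I would attach to every index $i$ a parameter family $\alpha_{i,j},\beta_{i,j}$ and write out $\varphi_{e_i}(e_1),\dots,\varphi_{e_i}(e_n)$ accordingly; note that $e_n$ is central and lies in $[g^1_{(n,1)},g^1_{(n,1)}]$, so $\varphi_{e_i}(e_n)=\alpha_{i,1}e_n$ for every $i$, and the tail terms $\tfrac{(-1)^k}{2}\alpha_{i,\,n-k+1}e_n$ appearing in $\varphi_{e_i}(e_k)$ must be carried for all $4\le k\le n-1$.

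Next I would impose commutativity of $\cdot$ through the relations $\varphi_{e_i}(e_j)=\varphi_{e_j}(e_i)$, producing a table of linear restrictions on the $\alpha_{i,j},\beta_{i,j}$ exactly as in the $\{e_1,e_2\},\{e_1,e_3\},\dots$ list of the previous proof. Since $e_1$ and $e_2$ generate the algebra, every product $e_i\cdot e_j$ with $j\ge 3$ is already determined by the $\frac12$-derivation identity applied to the defining brackets; for instance $e_i\cdot e_3=\varphi_{e_i}([e_1,e_2])=\tfrac12\big([e_i\cdot e_1,e_2]+[e_1,e_i\cdot e_2]\big)$. Thus it suffices to pin down $e_1\cdot e_1$, $e_1\cdot e_2$ and $e_2\cdot e_2$, and comparing coefficients in the commutativity relations collapses the whole table to the few constants $\alpha_j,\beta_1,\dots,\beta_6$ displayed in ${\bf TP}_1(g^1_{(n,1)})$--${\bf TP}_3(g^1_{(n,1)})$, while simultaneously yielding the stated formulas for $e_1\cdot e_j$ and $e_2\cdot e_3$.

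I would then enforce associativity $e_i\cdot(e_j\cdot e_k)=(e_i\cdot e_j)\cdot e_k$ on the generating triples, which contributes the nonlinear constraints among the surviving coefficients (the analogue of $\beta_3^2-\alpha_3\beta_8=0$ and its companions in the $n=5$ argument). To finish, I would run the change-of-basis normalization: a general graded base change $e_1'=\sum_j A_je_j$, $e_2'=\sum_j B_je_j$ with $\Delta_{1,2}=A_1B_2-A_2B_1\ne0$ (the remaining $e_i'$ being forced by the brackets) preserves the Lie structure, and under it the $e_2$- and $e_3$-coefficients of $e_1'\cdot e_1'$ transform as invariants. The trichotomy ``the $e_2$-coefficient can be scaled to $1$'' / ``it vanishes but the $e_3$-coefficient can be scaled to $1$'' / ``both vanish'' then separates the solutions into the three pairwise non-isomorphic families ${\bf TP}_3$, ${\bf TP}_2$ and ${\bf TP}_1$ respectively.

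The step I expect to be the main obstacle is the commutativity bookkeeping for general $n$: because every $\varphi_{e_i}$ with $4\le i\le n-1$ now carries an $e_n$-tail, the coefficient comparisons cannot be verified one index at a time as for $n=5$ but must be organized as an induction on $i$, and one has to check that it closes consistently at the boundary indices $i=n-2,n-1,n$. A secondary difficulty is showing the change-of-basis normalization is exhaustive and the three families are genuinely distinct, which amounts to identifying the correct invariants of the leading structure-constant vector under the automorphism action and confirming the cases do not overlap.
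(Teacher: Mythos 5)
Your proposal follows essentially the same route as the paper's proof: write each $\varphi_{e_i}$ in the form given by Theorem~\ref{halfderiv1}, impose commutativity via $\varphi_{e_i}(e_j)=\varphi_{e_j}(e_i)$ to collapse the table to the constants in $e_1\cdot e_1$, $e_1\cdot e_2$, $e_2\cdot e_2$, add the associativity constraints, and then split into three cases according to whether the $e_2$-coefficient, respectively the $e_3$-coefficient, of $e_1\cdot e_1$ can be normalized to $1$ or vanishes. The only detail worth noting is that for $n\ge 7$ the admissible base change must have $B_1=0$ (with $A_1B_2\neq0$), a slightly more restrictive automorphism group than the $\Delta_{1,2}\neq0$ condition you carry over from the $n=5$ case, but this does not affect the trichotomy.
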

\begin{proof} Let $(g^1_{(n,1)}, \cdot, [-,-])$ be a transposed Poisson algebra structure defined on the Lie algebra $g^1_{(n,1)}$. Then,
for any element $x \in g^1_{(n,1)} $, we have the operator of multiplication $\varphi_x(y) = x \cdot y$ is a $\frac12$-derivation. Hence, for $1 \leq i \leq n$ we derive by Theorem \ref{halfderiv1}:

$$\begin{cases}
\varphi_{e_i}(e_1)=\sum\limits_{j=1}^{n}\alpha_{i,j}e_j, \\
\varphi_{e_i}(e_2)=\alpha_{i,1}e_2+\beta_{i,n-2}e_{n-2}+\beta_{i,n-1}e_{n-1}+\beta_{i,n}e_n, \\
\varphi_{e_i}(e_3)=\alpha_{i,1}e_3+\frac{1}{2}\beta_{i,n-2}e_{n-1}-\frac{1}{2}\alpha_{i,n-2}e_{n}, \\
\varphi_{e_i}(e_j)=\alpha_{i,1}e_j+\frac{(-1)^j}{2}\alpha_{i,n-j+1}e_n, \  4\leq j\leq n-1, \\
\varphi_{e_i}(e_n)=\alpha_{i,1}e_n. \\
\end{cases}$$

By considering equality  $\varphi_{e_i}(e_j)=e_i\cdot e_j=e_j\cdot e_i=\varphi_{e_j}(e_i)$, we have following restrictions:

\begin{longtable}{lllllllll}
    $ \{e_1, e_2\},$ & $\Rightarrow$ & $\alpha_{2,1}=0, \ \alpha_{2,2}=\alpha_{1,1}, \ \alpha_{2,t}=0,\ 3\leq t\leq n-3,$ \\
& & $\alpha_{2,n-2}=\beta_{1,n-2}, \ \alpha_{2,n-1}=\beta_{1,n-1}, \ \alpha_{2,n}=\beta_{1,n},$ \\

$ \{e_1, e_3\},$ & $\Rightarrow $ & $\alpha_{3,1}=\alpha_{3,2}=0, \ \alpha_{3,3}=\alpha_{1,1},  \ \alpha_{3,t}=0,\ 4\leq t\leq n-2,$ \\

& & $\alpha_{3,n-1}=\frac{1}{2}\beta_{1,n-2}, \ \alpha_{3,n}=-\frac12\beta_{1,n},$ \\

$ \{e_1, e_j\},$ & $\Rightarrow$ & $\alpha_{j,t}=0,\ 1\leq t\leq j-1, \ \alpha_{j,j}=\alpha_{1,1},$ \\ 
$4\leq j\leq n-1,$ & & $\alpha_{j,n}=\frac{(-1)^j}{2}\alpha_{1,n-j+1}, \  \alpha_{j,t}=0,\ j+1\leq t\leq n-1,$ \\

$\{e_1, e_n\},$ & $\Rightarrow$ & $\alpha_{n,t}=0,\ 1\leq t\leq n-1, \ \alpha_{n,n}=\alpha_{1,1}, $ \\

$\{e_2, e_3\},$ & $\Rightarrow$ & $\beta_{3,n-2}=0, \ \beta_{3,n-1}=\frac{1}{2}\beta_{2,n-2}, \ \beta_{3,n}=-\frac{1}{2}\beta_{1,n-2},$  \\
$ \{e_2, e_j\},$ & $\Rightarrow$ & $\beta_{j,n-2}=\beta_{j,n-1}=\beta_{j,n}=0, $ \\
$ 4\leq j\leq n-1,$ & & \\
$ \{e_2, e_n\},$ & $\Rightarrow$ & $\beta_{n,n-2}=\beta_{n,n-1}=\beta_{n,n}=0,$\\
 $ \{e_3, e_j\},$ & $\Rightarrow$ & $\alpha_{3,t}=0, \ 2\leq t\leq n-3, $\\
$ 4\leq j\leq n-1.$& &\\
\end{longtable}

Thus, we have the following table of commutative multiplications of the transposed Poisson algebra structure $(g^1_{(n,1)},, \cdot, [-,-])$:

\begin{equation}\label{gn1}
\begin{cases}
e_1\cdot e_1=\sum\limits_{j=2}^{n}\alpha_{j}e_j, \\
e_1\cdot e_2=\beta_{1}e_{n-2}+\beta_{2}e_{n-1}+\beta_{3}e_n, \\
e_1\cdot e_3=\frac{1}{2}\beta_{1}e_{n-1}-\frac{1}{2}\alpha_{n-2}e_{n}, \\
e_1\cdot e_j=\frac{(-1)^j}{2}\alpha_{n-j+1}e_n, \  4\leq j\leq n-1, \\
e_2\cdot e_2=\beta_{4}e_{n-2}+\beta_{5}e_{n-1}+\beta_{6}e_n, \\
e_2\cdot e_3=\frac{1}{2}\beta_{4}e_{n-1}-\frac{1}{2}\beta_{1}e_{n}.\\
\end{cases}
\end{equation}

Consider the associative identity $x\cdot (y\cdot z)=(x\cdot y)\cdot z$ in \eqref{gn1}, we obtain the following restrictions on structure constants:
$$\begin{array}{lll}
\{e_1,e_2,e_3\}, & \Rightarrow & \alpha_{2}\beta_{4}=0,  \\[1mm]
\{e_1,e_2,e_2\}, & \Rightarrow & \alpha_{3}\beta_{4}-\alpha_{2}\beta_{5}=0,  \\[1mm]
\{e_1,e_1,e_3\}, & \Rightarrow & \alpha_{2}\beta_{1}=0,  \\[1mm]
\{e_1,e_1,e_2\}, & \Rightarrow & 2\alpha_{2}\beta_{5}+\alpha_{3}\beta_{4}=0, \ \alpha_{2}(2\beta_{6}-\beta_{2})=0,  \\[1mm]
\{e_1,e_1,e_i\}, & \Rightarrow & \alpha_{2}\beta_{4}=0, \ \alpha_{3}\beta_{4}=0, \ \alpha_{2}\beta_{5}=0,\ \alpha_{2}\beta_{1}=0, \  \alpha_{2}(2\beta_{6}-\beta_{2})=0, \\[1mm]
4\leq i\leq n-1.& &
\end{array}$$

Now we consider the general change of basis:
%

$$e_1'=\sum\limits_{j=1}^{n}A_je_j, \ e_2'=\sum\limits_{j=1}^{n}B_je_j, \ e_{i+1}'=[e_1',e_{i}'], \ 2\leq i\leq n-2, \ e_n'=[e_2',e_{n-2}'],$$
where $B_1=0$ and $A_1B_2\neq0$. Then, from the multiplication $e_1\cdot e_1=\sum\limits_{j=2}^{n}\alpha_{j}e_j,$ we discover that the structure constant of the $\alpha_2$ changes as follows:
$$\alpha_2'=\frac{A_1^2}{B_2}\alpha_2.$$

Thus, we have the following cases.
\begin{enumerate}
    \item If $\alpha_{2}=0.$  Again, by using a change of basis and from the product  $e_1\cdot e_1=\sum\limits_{j=3}^{n}\alpha_{j}e_j,$ we obtain the following relation:
$$\alpha_3'=\frac{A_1}{B_2}\alpha_3.$$

Now, we consider the following subcases.
\begin{enumerate}

 \item If $\alpha_{3}=0.$ Then, we have the algebra ${\bf TP}_1(g^1_{(n,1)}).$

 \item If $\alpha_{3}\neq 0,$ then, we have $\beta_{4}=0,$ and via transformation
$$\phi(e_1)=e_1, \ \phi(e_i)=\alpha_{3}e_i, \ 2\leq i\leq n-1,  \ \phi(e_{n})=\alpha_{3}^2e_n,$$
we get the algebra ${\bf TP}_2(g^1_{(n,1)})$.
\end{enumerate}

\item If $\alpha_{2}\neq0,$ then we derive $\beta_{4}=\beta_{5}=\beta_{1}=0, \ \beta_{2}=2\beta_{6},$ and we via transformation
$$\phi(e_1)=e_1, \ \phi(e_i)=\alpha_{2}e_i, \ 2\leq i\leq n-1,  \ \phi(e_{n})=\alpha_{2}^2e_n,$$
we obtain the algebra  ${\bf TP}_3(g^1_{(n,1)})$.

\end{enumerate}

\end{proof}

\begin{theorem} Let $(g^2_{(5,1)}, \cdot, [-,-])$ be a transposed Poisson algebra structure defined on the Lie algebra
$g^2_{(5,1)}$. Then the multiplication of $(g^2_{(5,1)}, \cdot)$ has the following form:

 ${\bf TP}_1(g^2_{(5,1)}): \ \begin{cases}
e_1\cdot e_1=e_2+\alpha_{3}e_3+\alpha_{4}e_4+\alpha_{5}e_5, \ e_1\cdot e_2=(\alpha_5(1-2\alpha_{16})-2 \alpha_{15})e_3+\alpha_{8}e_4,\\[1mm]
e_1\cdot e_3=-(\alpha_{15}+\alpha_5\alpha _{16})e_4, \ e_1\cdot e_5=-e_3+\alpha_{10}e_4, \\[1mm]
e_2\cdot e_2=(2(\alpha_{15}+\alpha_5\alpha _{16})^2-\alpha_5(2\alpha_{15}+\alpha _{16}))e_4,\ e_2\cdot e_5=\alpha_{15}e_4, \ e_5\cdot e_5=\alpha_{16}e_4;
\end{cases}$

 ${\bf TP}_2(g^2_{(5,1)}):\ \begin{cases}
e_1\cdot e_1=\alpha_{3}e_3+\alpha_{4}e_4, \ e_1\cdot e_2=\alpha_{8}e_4,\ e_1\cdot e_5=\alpha_{10}e_4, \\[1mm]
e_2\cdot e_2=\alpha_{13}e_4,\ e_2\cdot e_5=\alpha_{15}e_4, \ e_5\cdot e_5=e_4; \end{cases}$

 ${\bf TP}_3(g^2_{(5,1)}):\ \begin{cases}
e_1\cdot e_1=\alpha_{7}^2e_3+ \alpha_{4}e_4, \ e_1\cdot e_2=\alpha_{7}e_3+ \alpha_{8}e_4,\ e_1\cdot e_3=\frac{1}{2}\alpha_{7}e_4, \ e_1\cdot e_5=\alpha_{10}e_4, \\[1mm]
e_2\cdot e_2=e_3+\alpha_{13}e_4,\ e_2\cdot e_3=\frac{1}{2}e_4, \ e_2\cdot e_5=\alpha_{15}e_4, \ e_5\cdot e_5=e_4; \end{cases}$

 ${\bf TP}_4(g^2_{(5,1)}):\ \begin{cases}
e_1\cdot e_1=\frac{\left(\alpha _7^2-\alpha _5 \left(\alpha _7+2 \alpha _{15}\right)\right)}{\alpha _{12}}e_3+ \alpha_{4}e_4+ \alpha_{5}e_5, \\[1mm]
e_1\cdot e_2=\alpha_{7}e_3+ \alpha_{8}e_4+ \alpha_{9}e_5,\ e_1\cdot e_3=\frac{1}{2}(\alpha_{7}-\alpha_{5})e_4, \ e_1\cdot e_5=\frac{1}{2} \alpha _5 \alpha _{12}e_4, \\[1mm]
e_2\cdot e_2=\alpha_{12}e_3+ \alpha_{13}e_4+e_5,\ e_2\cdot e_3=\frac{1}{2}\alpha_{12}e_4, \ e_2\cdot e_5=\alpha_{15}e_4;\end{cases}$

 ${\bf TP}_5(g^2_{(5,1)}):\ \begin{cases}
e_1\cdot e_1=\alpha_{3}e_3+\alpha_{4}e_4+ \alpha_{5}e_5, \ e_1\cdot e_2=\alpha_{7}e_3+ \alpha_{8}e_4,\\[1mm]
e_1\cdot e_3=\frac{1}{2}(\alpha_{7}-\alpha_{5})e_4, \ e_2\cdot e_2=\alpha_{13}e_4+e_5,\ e_2\cdot e_5=\frac{\alpha _7 \left(\alpha _7-\alpha _5\right)}{2 \alpha _5}e_4;
\end{cases}$

${\bf TP}_6(g^2_{(5,1)}):\
e_1\cdot e_1=\alpha_{3}e_3+\alpha_{4}e_4, \ e_1\cdot e_2=\alpha_{8}e_4,\ e_2\cdot e_2= \alpha_{13}e_4+e_5,\ e_2\cdot e_5=\alpha_{15}e_4;
$

${\bf TP}_7(g^2_{(5,1)}):\ \begin{cases}
e_1\cdot e_1=\alpha_{3}e_3+ \alpha_{4}e_4, \ e_1\cdot e_2=2\alpha_{15}e_3+\alpha_{8}e_4+e_5,\\[1mm]
e_1\cdot e_3=\frac{1}{2}\alpha_{15}e_4, \ e_1\cdot e_5=\frac{1}{2} \left(\alpha _3 \left(\alpha _{12}-1\right)-4 \alpha _{15}^2\right)e_4, \\[1mm]
e_2\cdot e_2=\alpha_{12}e_3+ \alpha_{13}e_4,\ e_2\cdot e_3=\frac{1}{2}(\alpha_{12}-1)e_4, \
e_2\cdot e_5=\alpha_{15}e_4; \end{cases}$

${\bf TP}_8(g^2_{(5,1)}):\ \begin{cases}
e_1\cdot e_1=\alpha_{7}^2e_3+\alpha_{4}e_4, \ e_1\cdot e_2=\alpha_{7}e_3+ \alpha_{8}e_4,\ e_1\cdot e_3=\frac{1}{2}\alpha_{7}e_4, \\[1mm]
e_1\cdot e_5=\alpha_{10}e_4, \ e_2\cdot e_2=e_3+\alpha_{13}e_4,\ e_2\cdot e_3=\frac{1}{2}e_4, \ e_2\cdot e_5=\alpha_{15}e_4, \end{cases}$

${\bf TP}_9(g^2_{(5,1)}):\ \begin{cases}
e_1\cdot e_1=\alpha_{3}e_3+ \alpha_{4}e_4+ \alpha_{5}e_5, \ e_1\cdot e_2=\alpha_{7}e_3+ \alpha_{8}e_4,\ e_1\cdot e_3=\frac{1}{2}(\alpha_{7}-\alpha_{5})e_4, \\[1mm]
e_1\cdot e_5=\alpha_{10}e_4, \ e_2\cdot e_2=\alpha_{13}e_4,\
e_2\cdot e_5=\frac{\alpha _7 \left(\alpha_7-\alpha _5\right)}{2\alpha _5}e_4;
\end{cases}$

${\bf TP}_{10}(g^2_{(5,1)}):\
e_1\cdot e_1=\alpha_{3}e_3+ \alpha_{4}e_4, \ e_1\cdot e_2=\alpha_{8}e_4,\ e_1\cdot e_5=\alpha_{10}e_4, \ e_2\cdot e_2= \alpha_{13}e_4,\ e_2\cdot e_5=\alpha_{15}e_4;
$\\
where the transposed Poisson algebra has its products with respect to the bracket $[-,-]$, and remaining products are equal to zero.
\end{theorem}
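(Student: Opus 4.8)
The plan is to follow exactly the scheme that worked for $g^1_{(5,1)}$, now feeding in the $n=5$ part of Theorem~\ref{halfderiv2}. By Lemma~\ref{lemma1} a transposed Poisson structure on $g^2_{(5,1)}$ is the same data as a commutative associative product $\cdot$ for which every left multiplication $\varphi_{e_i}(y)=e_i\cdot y$ is a $\frac12$-derivation of the underlying Lie algebra. So first I would write down, for each $1\le i\le 5$, the general $\frac12$-derivation $\varphi_{e_i}$ allowed by Theorem~\ref{halfderiv2}, introducing indexed coefficients $\alpha_{i,j}$, $\beta_{i,j}$, $\gamma_{i,4}$; this records the five column-operators of $\cdot$ with a priori independent entries.

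Next I would impose the symmetry $\varphi_{e_i}(e_j)=e_i\cdot e_j=e_j\cdot e_i=\varphi_{e_j}(e_i)$ across all pairs $\{e_i,e_j\}$ and read off the resulting linear relations coefficient by coefficient. As in the preceding theorem this drastically reduces the number of free parameters and pins down a single generic commutative multiplication table for $(g^2_{(5,1)},\cdot)$, expressed through a handful of constants, namely the coefficients of $e_3,e_4,e_5$ occurring in $e_1\cdot e_1$, $e_1\cdot e_2$, $e_1\cdot e_5$, $e_2\cdot e_2$, $e_2\cdot e_5$ and $e_5\cdot e_5$.

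The substantive step is enforcing associativity. Substituting the generic table into $x\cdot(y\cdot z)=(x\cdot y)\cdot z$ on triples of basis elements yields a system of quadratic constraints among the surviving constants. I would then use the automorphisms of $g^2_{(5,1)}$ --- a change of basis $e_1'=\sum_j A_je_j$, $e_2'=\sum_j B_je_j$ with $e_3',e_4',e_5'$ forced by the bracket --- to normalize: tracking how each leading coefficient transforms reveals which are scaling invariants and which can be rescaled to $0$ or to $1$.

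The hard part will be the case analysis. Whereas $g^1_{(5,1)}$ split into three branches, here the vanishing/non-vanishing patterns of several distinguished constants --- in particular the $e_3$- and $e_5$-coefficients appearing in $e_1\cdot e_1$, $e_2\cdot e_2$ and $e_5\cdot e_5$ --- fan out into the ten inequivalent normal forms ${\bf TP}_1(g^2_{(5,1)})$ through ${\bf TP}_{10}(g^2_{(5,1)})$. Keeping the branches mutually exclusive, checking that each prescribed normalization is realized by a genuine automorphism, and confirming that the ten resulting tables are pairwise distinct is where the bookkeeping is heaviest.
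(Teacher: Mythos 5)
Your plan is essentially the paper's own proof: write the five multiplication operators $\varphi_{e_i}$ in the form given by Theorem~\ref{halfderiv2} for $n=5$, impose $\varphi_{e_i}(e_j)=\varphi_{e_j}(e_i)$ to get one generic commutative table, then cut it down with the associativity constraints (which in the paper first force $\alpha_{11}=0$ and $\alpha_1=\alpha_6=0$) and normalize the surviving invariants by a general basis change, branching on which of them vanish to reach the ten normal forms. One small correction: $g^2_{(5,1)}$ has \emph{three} generators $e_1,e_2,e_5$ (only $e_3=[e_1,e_2]$ and $e_4=[e_1,e_3]$ lie in the derived subalgebra), so $e_5'$ must be taken as a free linear combination $\sum_t C_te_t$ rather than being ``forced by the bracket''; this does not derail your argument, since the normalizations actually needed only involve the coefficients of $e_1'$ and $e_2'$.
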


\begin{proof}
Let $(g^2_{(5,1)}, \cdot, [-,-])$ be a transposed Poisson algebra structure defined on Lie algebra $g^2_{(5,1)}$. Then,
for any element of $x \in g^2_{(5,1)} $, we have operator of multiplication $\varphi_x(y) = x \cdot y$ is a $\frac12$-derivation. Hence, by using Theorem \ref{halfderiv2} $1 \leq i\leq  n$, we derive

$$\begin{cases}
\varphi_{e_i}(e_1)=\alpha_{i,1}e_1+\alpha_{i,2}e_2+ \alpha_{i,3}e_3+\alpha_{i,4}e_4+\alpha_{i,5}e_5, \\
\varphi_{e_i}(e_2)=\beta_{i,2}e_2+\beta_{i,3}e_3+ \beta_{i,4}e_4+\beta_{i,5}e_5,\\
\varphi_{e_i}(e_3)=\frac{1}{2}(\alpha_{i,1}+\beta_{i,2})e_3+\frac{1}{2}(\beta_{i,3}-\alpha_{i,5})e_4, \\
\varphi_{e_i}(e_4)=\frac{1}{4}(3\alpha_{i,1}+\beta_{i,2})e_4, \\
\varphi_{e_i}(e_5)=-\alpha_{i,2}e_3+\gamma_{i,4}e_4 +\frac{1}{2}(3\alpha_{i,1}-\beta_{i,2})e_5, \\
\end{cases}$$

By considering the property of $\varphi_{e_i}(e_j)=e_i\cdot e_j=e_j\cdot e_i=\varphi_{e_j}(e_i)$, we obtain the following restrictions:

 $$\begin{array}{lll}
    \{e_1, e_2\}, & \Rightarrow & \alpha_{2,1}=0, \  \alpha_{2,2}=\beta_{1,2}, \  \alpha_{2,3}=\beta_{1,3}, \ \alpha_{2,4}=\beta_{1,4}, \ \alpha_{2,5}=\beta_{1,5},\\[1mm]
    \{e_1, e_3\}, & \Rightarrow & \alpha_{3,1}=\alpha_{3,2}=0, \  \alpha_{3,3}=\frac{1}{2}(\alpha_{1,1}+\beta_{1,2}), \  \alpha_{3,4}=\frac{1}{2}(\beta_{1,3}-\alpha_{1,5}), \ \alpha_{3,5}=0,  \\[1mm]
    \{e_1, e_4\}, & \Rightarrow & \alpha_{4,1}=\alpha_{4,2}=\alpha_{4,3}=0, \  \alpha_{4,4}=\frac{1}{4}(3\alpha_{1,1}+\beta_{1,2}), \ \alpha_{4,5}=0,  \\[1mm]
    \{e_1, e_5\}, & \Rightarrow & \alpha_{5,1}=\alpha_{5,2}=0, \
\alpha_{5,3}=-\alpha_{1,2}, \  \alpha_{5,4}=\gamma_{1,4}, \ \alpha_{5,5}=\frac{1}{2}(3\alpha_{1,1}-\beta_{1,2}),  \\[1mm]
 \{e_2, e_3\}, & \Rightarrow & \beta_{3,2}=0, \  \beta_{3,3}=\frac{1}{2}\beta_{2,2}, \  \beta_{3,4}=\frac{1}{2}(\beta_{2,3}-\beta_{1,3}), \ \beta_{3,5}=0,  \\[1mm]
 \{e_2, e_4\}, & \Rightarrow & \beta_{4,2}=\beta_{4,3}=0, \  \beta_{4,4}=\frac{1}{4}\beta_{2,2}, \ \beta_{4,5}=0,  \\[1mm]
 \{e_2, e_5\}, & \Rightarrow & \beta_{5,2}=0, \
\beta_{5,3}=-\beta_{1,2}, \  \beta_{5,4}=\gamma_{2,4}, \ \beta_{5,5}=-\frac{1}{2}\beta_{2,2},\\[1mm]
\{e_3, e_5\}, & \Rightarrow & \gamma_{3,4}=-\frac{1}{4}(3\alpha_{1,1}+\beta_{1,2}),\\[1mm]
\{e_4, e_5\}, & \Rightarrow & \gamma_{4,4}=0.\\[1mm]
    \end{array}$$

Thus, we have the following table of commutative multiplications of the transposed Poisson algebra structure defined on  $g^2_{(5,1)}$:

$$\begin{cases}
e_1\cdot e_1=\alpha_{1}e_1+\alpha_{2}e_2+ \alpha_{3}e_3+\alpha_{4}e_4+\alpha_{5}e_5, \ e_1\cdot e_2=\alpha_{6}e_2+\alpha_{7}e_3+ \alpha_{8}e_4+\alpha_{9}e_5,\\[1mm]
e_1\cdot e_3=\frac{1}{2}(\alpha_{1}+\alpha_{6})e_3 +\frac{1}{2}(\alpha_{7}-\alpha_{5})e_4, \ e_1\cdot e_4=\frac{1}{4}(3\alpha_{1}+\alpha_{6})e_4, \\[1mm]
e_1\cdot e_5=-\alpha_{2}e_3+\alpha_{10}e_4 +\frac{1}{2}(3\alpha_{1}-\alpha_{6})e_5, \ e_2\cdot e_2=\alpha_{11}e_2+\alpha_{12}e_3+ \alpha_{13}e_4+\alpha_{14}e_5,\\[1mm]
e_2\cdot e_3=\frac{1}{2}\alpha_{11}e_3+\frac{1}{2}(\alpha_{12}-\alpha_{9})e_4, \ e_2\cdot e_4=\frac{1}{4}\alpha_{11}e_4, \ e_2\cdot e_5=-\alpha_{6}e_3+\alpha_{15}e_4 +\frac{1}{2}\alpha_{11}e_5, \\[1mm]
e_3\cdot e_3=\frac{1}{4}\alpha_{11}e_4, \ e_3\cdot e_5=-\frac14(3\alpha_{1}+\alpha_{6})e_4, \ e_5\cdot e_5=\alpha_{16}e_4.
\end{cases}$$

Now, consider the associative identity $x\cdot (y\cdot z)=(x\cdot y)\cdot z$, we obtain the following restrictions on structure constants:
$$\begin{array}{lll}
\{e_4,e_2,e_2\}, & \Rightarrow & \alpha_{11}=0,  \\[1mm]
\{e_5,e_1,e_3\}, & \Rightarrow & (\alpha_1-\alpha_6)(3\alpha_1+\alpha_6)=0,  \\[1mm]
\{e_5,e_2,e_1\}, & \Rightarrow & \alpha_6(\alpha_1-\alpha_6)=0,  \\[1mm]
\{e_5,e_5,e_2\}, & \Rightarrow & \alpha_6(3\alpha_1+\alpha_6)=0.
\end{array}$$

Hence, we can conclude that $\alpha_1=\alpha_6=0$ and the table of multiplication of the algebra will have the following form:

$$\begin{cases}
e_1\cdot e_1=\alpha_{2}e_2+ \alpha_{3}e_3+ \alpha_{4}e_4+\alpha_{5}e_5, \ e_1\cdot e_2=\alpha_{7}e_3+ \alpha_{8}e_4+ \alpha_{9}e_5,\\[1mm]
e_1\cdot e_3=\frac{1}{2}(\alpha_{7}-\alpha_{5})e_4, \ e_1\cdot e_5=-\alpha_{2}e_3+\alpha_{10}e_4, \ e_2\cdot e_2=\alpha_{12}e_3+ \alpha_{13}e_4+\alpha_{14}e_5,\\[1mm]
e_2\cdot e_3=\frac{1}{2}(\alpha_{12}-\alpha_{9})e_4, \ e_2\cdot e_5=\alpha_{15}e_4, \ e_5\cdot e_5=\alpha_{16}e_4.
\end{cases}$$

 By considering  the general change of the generators of basis
$$e_1'=\sum\limits_{t=1}^{5}A_te_t, \ e_2'=\sum\limits_{t=1}^{5}B_te_t, \ e_5'=\sum\limits_{t=1}^{5}C_te_t,$$ and from the  product $e_1'\cdot e_1'=\alpha_{2}'e_2'+\alpha_{3}'e_3'+\alpha_{4}'e_4'+\alpha_{5}'e_5'$, we derive
$$\alpha_2'=\frac{A_1^2}{B_2}\alpha_2.$$
We have the following possible cases.
\begin{enumerate}
    \item Let $\alpha_2\neq0$. Then by choosing $B_2=A_1^2\alpha_2$, we get $\alpha_2'=1.$ By considering the associative identity $x\cdot (y\cdot z)=(x\cdot y)\cdot z$, we obtain the restrictions on structure constants as follows:
$$\begin{array}{lll}
\{e_1,e_1,e_3\}, & \Rightarrow & \alpha_{12}=\alpha_{9},  \\[1mm]
\{e_1,e_1,e_2\}, & \Rightarrow & \alpha_{9}=\alpha_{14}=0,  \\[1mm]
\{e_1,e_1,e_5\}, & \Rightarrow & \alpha _7=\alpha _5 \left(1-2 \alpha _{16}\right)-2 \alpha _{15},  \\[1mm]
\{e_2,e_1,e_1\}, & \Rightarrow & \alpha _{13}=2 \alpha _{15}^2+2 \alpha _5 \alpha _{15} \left(-1+2 \alpha _{16}\right)+\alpha _5^2 \alpha _{16} \left(-1+2 \alpha _{16}\right).\\[1mm]
\end{array}$$
In this case, we have the algebra ${\bf TP}_1(g^2_{(5,1)})$.

\item Let $\alpha_2=0$. Then, from the product $e_5'\cdot e_5'=\alpha_{16}'e_4'$, we obtain the following expression:
$$\alpha_{16}'=\frac{A_1^2}{B_2}\alpha_{16}.$$

\begin{enumerate}
    \item  If $\alpha_{16}\neq0$, then by choosing $B_2=A_1^2\alpha_{16}$, we obtain $\alpha_{16}'=1.$ Considering the associative identity, we obtain the following restrictions:
$$\begin{array}{lll|lll}
\{e_1,e_1,e_5\}, & \Rightarrow & \alpha_{5}=0,  &
\{e_1,e_2,e_5\}, & \Rightarrow & \alpha_{9}=0, \\
\{e_5,e_2,e_2\}, & \Rightarrow & \alpha_{14}=0, &
\{e_2,e_1,e_1\}, & \Rightarrow & \alpha_{7}^2=\alpha_3\alpha_{12}.  \\
[1mm]
\end{array}$$

Again, using the change of basis for the product $e_2'\cdot e_2'=\alpha_{12}'e_3'+ \alpha_{13}'e_4'$, we will get the following  expression:
$$\alpha_{12}'=A_1\alpha_{12}.$$

   If $\alpha_{12}=0,$ then we get $\alpha_7=0$ and obtain the
algebra   ${\bf TP}_2(g^2_{(5,1)}).$

If $\alpha_{12}\neq0,$ then by choosing $A_1=\frac{1}{\alpha_{12}}$, we can assume $\alpha_{12}'=1, \alpha_3=\alpha_7^2.$  Therefore, the
algebra ${\bf TP}_3(g^2_{(5,1)})$  is obtained.

\item If $\alpha_{16}=0,$ then apply the change of basis we get
$$\alpha_{14}'=\frac{B_2^2}{A_1^2}\alpha_{14}, \ \alpha_{9}'=\frac{B_2(A_1\alpha_9+A_2\alpha_{14})}{A_1^2}, \ \alpha_{12}'=\frac{B_2(A_1\alpha_{12}+A_2\alpha_{14})}{A_1^2}.$$

\begin{enumerate}
    \item If $\alpha_{14}\neq 0$ then  by choosing $A_1^2=B_1^2\alpha_{14}, \ A_2=-\frac{A_1\alpha_9}{\alpha_{14}}$, we obtain $\alpha_9'=0, \ \alpha_{14}'=1.$
    Again, using the change of basis elements of algebra, we have the following  expression:
    $$\alpha_5'=\alpha_5, \quad  \quad \alpha_{12}'=\frac{B_2}{A_1}\alpha_{12},  \quad B_2^2=A_1^2.$$

    By considering the associative identity for the triples $\{e_1,e_1,e_2\}$ va $\{e_1,e_2,e_2\}$, we get
    $$2 \alpha _{10}=\alpha _5 \alpha _{12}, \ \alpha _5 \alpha _7-\alpha _7^2+\alpha _3 \alpha _{12}+2 \alpha _5 \alpha _{15}=0,$$
 respectively. we consider following cases.

$\alpha_{12}\neq0,$ then we have the algebra   ${\bf TP}_4(g^2_{(5,1)}).$

$\alpha_{12}=0$ and $\alpha_5\neq0,$ then we have the algebra   ${\bf TP}_5(g^2_{(5,1)}).$

$\alpha_{12}=\alpha_5=0,$ then we have the algebra ${\bf TP}_6(g^2_{(5,1)}).$

\item If $\alpha_{14}=0$  then we derive $$\alpha_5'=\alpha_5+\frac{2A_2}{A_1}\alpha_9, \ \alpha_{9}'=\frac{B_2}{A_1}\alpha_9, \ \alpha_{12}'=\frac{B_2}{A_1}\alpha_{12}.$$

By using the associative identity for the triples $\{e_1,e_1,e_2\}$ and $\{e_1,e_2,e_2\}$, we obtain
    $$\begin{array}{l}
    2 \alpha _7 \alpha _9=\alpha _5 \alpha _{12}+2\alpha _9 \alpha _{15},\\[1mm]
    \alpha _5 \alpha _7-\alpha _7^2-\alpha _3 \alpha _9-2 \alpha _9 \alpha _{10}+\alpha _3 \alpha _{12}+2 \alpha _5 \alpha _{15}=0.\end{array}$$
Now we consider the following distinct cases:

$\alpha_{9}\neq0$, then by choosing $A_1=B_2\alpha_{9}, A_2=-\frac{B_2\alpha_5}{2}$, we can assume $\alpha_5'=0, \alpha_9'=1$ and obtain the algebra ${\bf TP}_7(g^2_{(5,1)}).$

$\alpha_9=0, \ \alpha_{12}\neq0,$ then by selecting  $A_1=B_2\alpha_{12}$,
we get $\alpha_{12}'=1$ and the algebra ${\bf TP}_8(g^2_{(5,1)}).$

$\alpha_9=0, \ \alpha_{12}=0, \alpha_5\neq0 $, then we have the algebra ${\bf TP}_9(g^2_{(5,1)}).$

$\alpha_9=\alpha_{12}=\alpha_5=0,$ then we have the algebra ${\bf TP}_{10}(g^2_{(5,1)}).$

\end{enumerate}\end{enumerate}\end{enumerate}\end{proof}

\begin{theorem} Let $(g^2_{(6,1)}, \cdot, [-,-])$ be a transposed Poisson algebra structure defined on the Lie algebra
$g^2_{(6,1)}$. Then the multiplication of $(g^2_{(6,1)}, \cdot)$ has the following form:

${\bf TP}_1(g^2_{(6,1)}):\ \begin{cases}
e_1\cdot e_1=e_2+ \alpha_{2}e_3+ \alpha_{3}e_4 +\alpha_{4}e_5, \ e_1\cdot e_2=-2\alpha_{11}e_4+\alpha_{7}e_5,\ e_1\cdot e_3=-\alpha_{11}e_5, \\[1mm]
e_1\cdot e_6=-e_3-\alpha_{2}e_4+\alpha_{8}e_5, \ e_2\cdot e_6=\alpha_{11}e_5, \ e_6\cdot e_6=e_4+\alpha_{12}e_5;
\end{cases}$

${\bf TP}_2(g^2_{(6,1)}):\ \begin{cases}
e_1\cdot e_1=e_3+ \alpha_{3}e_4+ \alpha_{4}e_5, \ e_1\cdot e_2=\alpha_{6}e_4+\alpha_{7}e_5,\ e_1\cdot e_3=\frac{1}{2}\alpha_{6}e_5, \\[1mm]
e_1\cdot e_6=-e_4+\alpha_{8}e_5, \ e_2\cdot e_2=\alpha_{10}e_5,\ e_2\cdot e_6=\alpha_{11}e_5, \ e_6\cdot e_6=\alpha_{12}e_5;
\end{cases}$

${\bf TP}_3(g^2_{(6,1)}):\ \begin{cases}
e_1\cdot e_1= \alpha_{3}e_4+\alpha_{4}e_5, \ e_1\cdot e_2=\alpha_{6}e_4+\alpha_{7}e_5,\ e_1\cdot e_3=\frac{1}{2}\alpha_{6}e_5, \ e_1\cdot e_6=\alpha_{8}e_5, \\[1mm]
e_2\cdot e_2=\alpha_{9}e_4+\alpha_{10}e_5,\ e_2\cdot e_3= \frac{1}{2}\alpha_{9}e_5, \ e_2\cdot e_6=\alpha_{11}e_5, \ e_6\cdot e_6=\alpha_{12}e_5;
\end{cases}$ \\
where the transposed Poisson algebra has its products with respect to the bracket $[-,-]$, and remaining products are equal to zero.
\end{theorem}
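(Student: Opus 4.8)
The plan is to invoke Lemma~\ref{lemma1}, reducing the problem to the determination of all commutative associative products $\cdot$ on $g^2_{(6,1)}$ for which each left-multiplication operator $\varphi_{e_i}(y)=e_i\cdot y$ is a $\frac12$-derivation. By the $n=6$ branch of Theorem~\ref{halfderiv2}, every such $\varphi_{e_i}$ is governed by parameters $\alpha_{i,1},\dots,\alpha_{i,6},\beta_{i,4},\beta_{i,5},\gamma_{i,5}$; writing these out for each fixed $i$ (with, e.g., $\varphi_{e_i}(e_1)=\sum_{j=1}^{6}\alpha_{i,j}e_j$ and $\varphi_{e_i}(e_2)=\alpha_{i,1}e_2-3\alpha_{i,6}e_3+\beta_{i,4}e_4+\beta_{i,5}e_5$) gives a fully parametrized candidate product.

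First I would impose commutativity via $\varphi_{e_i}(e_j)=\varphi_{e_j}(e_i)$ for all pairs $\{e_i,e_j\}$. Comparing coefficients at each basis vector forces a cascade of identifications among the structure constants, exactly as in the $g^2_{(5,1)}$ computation, collapsing the free parameters down to a short list $\alpha_1,\dots,\alpha_{12}$ and producing an explicit commutative multiplication table. Next I would enforce associativity $x\cdot(y\cdot z)=(x\cdot y)\cdot z$ on triples drawn from the generators $e_1,e_2,e_6$ and the derived vectors $e_3,e_4,e_5$; as in the $n=5$ argument, a handful of well-chosen triples should force the non-nilpotent diagonal parameter $\alpha_1$ together with the top-weight parameter $\alpha_6$ to vanish, after which the remaining triples pin down polynomial relations among the surviving constants.

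The classification is then completed by a general change of generators $e_1'=\sum_t A_t e_t$, $e_2'=\sum_t B_t e_t$, $e_6'=\sum_t C_t e_t$, with the remaining basis vectors defined through the Lie brackets. Tracking how the leading coefficient of $e_1\cdot e_1$ transforms yields a clean three-way branch keyed to that leading term: if the coefficient of $e_2$ is nonzero it can be scaled to $1$, giving ${\bf TP}_1(g^2_{(6,1)})$; if that coefficient vanishes but the coefficient of $e_3$ survives, a suitable diagonal rescaling of the basis yields ${\bf TP}_2(g^2_{(6,1)})$; and the remaining, fully degenerate case gives ${\bf TP}_3(g^2_{(6,1)})$.

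The main obstacle will be the associativity bookkeeping combined with verifying that this three-way split is exhaustive and pairwise non-redundant: one must confirm that the vanishing or non-vanishing of the two leading coefficients of $e_1\cdot e_1$ are genuine invariants under the admissible changes of generators, and that within each branch the residual freedom in the parameters $A_t,B_t,C_t$ is exactly sufficient to reach the stated normal form without accidentally merging two distinct families.
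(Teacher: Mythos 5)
Your proposal follows essentially the same route as the paper: reduce via Lemma~\ref{lemma1} to the $n=6$ case of Theorem~\ref{halfderiv2}, impose commutativity and associativity to collapse the parameters, and then branch on the coefficients of $e_2$ and $e_3$ in $e_1\cdot e_1$, which transform as $\alpha_1'=\frac{A_1^2}{B_2}\alpha_1$ and $\alpha_2'=\frac{A_1}{B_2}\alpha_2$ under the admissible change of generators. The only minor difference in execution is that in the paper the diagonal parameter $\alpha_{1,1}$ is already forced to vanish at the commutativity stage (from the pair $\{e_2,e_6\}$) rather than by associativity as in the $n=5$ case, but this does not change the argument.
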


\begin{proof}
Let $(g^2_{(6,1)}, \cdot, [-,-])$ be a transposed Poisson algebra structure defined on Lie algebra $g^2_{(6,1)}$. Then,
for any element of $x \in g^2_{(6,1)} $, we have that operator of multiplication $\varphi_x(y) = x \cdot y$ is a $\frac12$-derivation. Hence, by using Theorem \ref{halfderiv2} for $1 \leq i\leq 6$,  we derive the following:

$$\begin{cases}
\varphi_{e_i}(e_1)=\alpha_{i,1}e_1+ \alpha_{i,2}e_2+ \alpha_{i,3}e_3+\alpha_{i,4}e_4+\alpha_{i,5}e_5+\alpha_{i,6}e_6, \\
\varphi_{e_i}(e_2)=\alpha_{i,1}e_2-3\alpha_{i,6}e_3+\beta_{i,4}e_4+\beta_{i,5}e_5,\\
\varphi_{e_i}(e_3)=\alpha_{i,1}e_3-2\alpha_{i,6}e_4 + \frac{1}{2}\beta_{i,4}e_5, \\
\varphi_{e_i}(e_4)=\alpha_{i,1}e_4-\frac{3}{2}\alpha_{i,6}e_5, \\
\varphi_{e_i}(e_5)=\alpha_{i,1}e_5, \\
\varphi_{e_i}(e_6)=-\alpha_{i,2}e_3-\alpha_{i,3}e_4+\gamma_{i,5}e_5+\alpha_{i,1}e_6, \\
  \end{cases}$$

By considering the equality  $\varphi_{e_i}(e_j)=e_i\cdot e_j=e_j\cdot e_i=\varphi_{e_j}(e_i)$ for the some elements ${e_i}$ and $e_j$, we have the following restrictions:

\begin{longtable}{llllllll}
   $ \{e_1, e_2\},$ & $\Rightarrow $ & $\alpha_{2,1}=0, \  \alpha_{2,2}=\alpha_{1,1}, \  \alpha_{2,3}=-3\alpha_{1,6}, \ \alpha_{2,4}=\beta_{1,4}, \ \alpha_{2,5}=\beta_{1,5}, \ \alpha_{2,6}=0,$\\
    $\{e_1, e_3\},$ & $\Rightarrow$ & $\alpha_{3,1}=\alpha_{3,2}=0, \  \alpha_{3,3}=\alpha_{1,1}, \  \alpha_{3,4}=-2\alpha_{1,6}, \ \alpha_{3,5}=\frac{1}{2}\beta_{1,4}, \ \alpha_{3,6}=0,$  \\
    $\{e_1, e_4\}, $& $\Rightarrow$ & $\alpha_{4,1}=\alpha_{4,2}=\alpha_{4,3}=0, \  \alpha_{4,4}=\alpha_{1,1}, \ \alpha_{4,5}=-\frac{3}{2}\alpha_{1,6}, \ \alpha_{4,6}=0,$  \\
    $\{e_1, e_5\},$ &$ \Rightarrow$ & $\alpha_{5,1}=\alpha_{5,2}=\alpha_{5,3}=\alpha_{5,4}=0, \ \alpha_{5,5}=\alpha_{1,1}, \ \alpha_{5,6}=0,$  \\
    $\{e_1, e_6\}, $&$ \Rightarrow $& $\alpha_{6,1}=\alpha_{6,2}=0, \
\alpha_{6,3}=-\alpha_{1,2}, \ \alpha_{6,4}=-\alpha_{1,3}, \ \alpha_{6,5}=\gamma_{1,5}, \ \alpha_{6,6}=\alpha_{1,1}, $ \\
 $\{e_2, e_3\},$ & $\Rightarrow$ & $\beta_{3,4}=0, \ \beta_{3,5}=\frac{1}{2}\beta_{2,4}, $ \\
 $\{e_2, e_4\},$ & $\Rightarrow$ & $\beta_{4,4}=\beta_{4,5}=0, $ \\
 $\{e_2, e_5\},$ & $\Rightarrow$ & $\beta_{5,4}=\beta_{5,5}=0,$\\
 $\{e_2, e_6\},$ &$ \Rightarrow$ & $\alpha_{1,1}=0, \ \beta_{6,4}=3\alpha_{1,6}, \  \beta_{6,5}=\gamma_{2,5},$\\
$\{e_3, e_6\},$ & $\Rightarrow$ & $\gamma_{3,5}=\frac{3}{2}\alpha_{1,6},$\\
$\{e_4, e_6\},$ & $\Rightarrow$ & $\gamma_{4,5}=0,$\\
$\{e_5, e_6\},$ & $\Rightarrow $& $\gamma_{5,5}=0.$
\end{longtable}

Thus, we have the following table of commutative multiplications of the transposed Poisson algebra structure defined on  $g^2_{(6,1)}$:

$$\begin{cases}
e_1\cdot e_1=\alpha_{1}e_2+ \alpha_{2}e_3+ \alpha_{3}e_4+\alpha_{4}e_5+\alpha_{5}e_6, \ e_1\cdot e_2=-3\alpha_{5}e_3+\alpha_{6}e_4+ \alpha_{7}e_5,\\[1mm]
e_1\cdot e_3=-2\alpha_{5}e_4+\frac{1}{2}\alpha_{6}e_5, \ e_1\cdot e_4=-\frac{3}{2}\alpha_{5}e_5, \ e_1\cdot e_6=-\alpha_{1}e_3-\alpha_{2}e_4+\alpha_{8}e_5, \\[1mm]
e_2\cdot e_2=\alpha_{9}e_4+\alpha_{10}e_5,\ e_2\cdot e_3= \frac{1}{2}\alpha_{9}e_5, \ e_2\cdot e_6=3\alpha_{5}e_4+\alpha_{11}e_5, \\[1mm]
e_3\cdot e_6=\frac32\alpha_{5}e_5, \ e_6\cdot e_6=\alpha_{1}e_4 +\alpha_{12}e_5.
\end{cases}$$

By using the multiplication of Lie algebra $g^2_{(6,1)}$, we consider the general basis change:
$$e_1'=\sum\limits_{t=1}^{6}A_te_t, \quad e_2'=\sum\limits_{t=1}^{6}B_te_t, \quad e_6'=\sum\limits_{t=1}^{6}C_te_t.$$
Then the equality $e_1'\cdot e_1'=\alpha_{1}'e_2'+ \alpha_{2}'e_3'+ \alpha_{3}'e_4'+\alpha_{4}'e_5'+\alpha_{5}'e_6'$ derives
$$\alpha_1'=\frac{A_1^2}{B_2}\alpha_1.$$

\begin{enumerate}
    \item If $\alpha_1\neq0.$ Then  by choosing $B_2=A_1^2\alpha_1$ we can assume $\alpha_1'=1$. Further, considering the associative identity  for the triples $\{e_1,e_1,e_6\}$ and $\{e_1,e_1,e_2\}$, we obtain the following restrictions, respectively:
$$\alpha_{5}=0, \ \alpha_{6}=-2\alpha _{11}, \ \alpha_{9}=\alpha_{10}=0.$$
In this case we obtain the algebra  ${\bf TP}_1(g^2_{(6,1)}).$

\item If $\alpha_1=0.$ Using the associative identity for the triple $\{e_1,e_1,e_3\}$ we obtain $\alpha_5=0$. Again, using the change of basis we get the expression
$$\alpha_2'=\frac{A_1}{B_2}\alpha_2.$$

\begin{enumerate}
    \item If $\alpha_2\neq0,$ then putting $B_2=A_1\alpha_2$ we get $\alpha_2'=1.$ Furthermore, from the identity $e_1\cdot(e_1\cdot e_3)=(e_1\cdot e_1)\cdot e_3$ we obtain $\alpha_9=0$. Hence, we have the algebra ${\bf TP}_2(g^2_{(6,1)}).$

\item If $\alpha_2=0,$ then we have the algebras ${\bf TP}_3(g^2_{(6,1)}).$
\end{enumerate}

\end{enumerate}

\end{proof}

\begin{theorem} Let $(g^2_{(n,1)}, \cdot, [-,-])$ be a transposed Poisson algebra structure defined on Lie algebra
$g^2_{(n,1)}$ and $n\geq 7$. Then, the multiplication of $(g^2_{(n,1)}, \cdot)$ has the following form:

${\bf TP}_1(g^2_{(n,1)}):\ \begin{cases}
e_1\cdot e_1=e_2+\sum\limits_{i=3}^{n-1}\alpha_{i}e_i, \ e_1\cdot e_2=2\alpha_{1}e_{n-2}+\alpha_{n}e_{n-1}, \ e_1\cdot e_3=\alpha_{1}e_{n-1}, \\[1mm]
e_1\cdot e_n=-e_3-\sum\limits_{i=4}^{n-2}\alpha_{i-1}e_i+\alpha_{n+1}e_{n-1},\ e_2\cdot e_n=-\alpha_1\alpha_{3}e_{n-1},\\[1mm] e_n\cdot e_n=\sum\limits_{i=4}^{n-2}\alpha_{i-1}e_i +\alpha_{n+2}e_{n-1};
\end{cases}$

${\bf TP}_2(g^2_{(n,1)}):\ \begin{cases}
e_1\cdot e_1=e_3+\sum\limits_{i=4}^{n-1}\alpha_{i}e_i, \ e_1\cdot e_2=\alpha_{n}e_{n-1}, \ e_1\cdot e_n=-e_4-\sum\limits_{i=5}^{n-2}\alpha_{i-1}e_i+\alpha_{n+1}e_{n-1},\\[1mm]
e_2\cdot e_2=\alpha_{n+3}e_{n-1}, \ e_2\cdot e_n=\alpha_{n+4}e_{n-1},\ e_n\cdot e_n=e_4+\sum\limits_{i=5}^{n-2}\alpha_{i-1}e_i +\alpha_{n+5}e_{n-1};
\end{cases}$

${\bf TP}_3(g^2_{(n,1)}):\ \begin{cases}
e_1\cdot e_1=\sum\limits_{i=4}^{n-1}\alpha_{i}e_i, \ e_1\cdot e_2=2\alpha_{1}e_{n-2}+\alpha_{n}e_{n-1}, \ e_1\cdot e_3=\alpha_{1}e_{n-1}, \\[1mm]
e_1\cdot e_n=-\sum\limits_{i=5}^{n-2}\alpha_{i-1}e_i+\alpha_{n+1}e_{n-1},\ e_2\cdot e_2=2\alpha_{n+2}e_{n-2}+\alpha_{n+3}e_{n-1}, \\[1mm]
e_2\cdot e_3=\alpha_{n+2}e_{n-1}, \ e_2\cdot e_n=\alpha_{n+4}e_{n-1},\ e_n\cdot e_n=\sum\limits_{i=5}^{n-2}\alpha_{i-1}e_i +\alpha_{n+5}e_{n-1};
\end{cases}$\\
where the transposed Poisson algebra has its products with respect to the bracket $[-,-]$, and the remaining products are equal to zero.
\end{theorem}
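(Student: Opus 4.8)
The plan is to follow the established template from the $n=5$ and $n=6$ cases, adapting it to the $\frac12$-derivation description of $g^2_{(n,1)}$ for $n \geq 7$ given in Theorem \ref{halfderiv2}. First I would use Lemma \ref{lemma1}: every operator $\varphi_{e_i}(y) = e_i \cdot y$ must be a $\frac12$-derivation, so by Theorem \ref{halfderiv2} each $\varphi_{e_i}$ has the prescribed form with parameters $\alpha_{i,1}, \beta_{i,n-2}, \beta_{i,n-1}$ and $\gamma_{i,n-1}$. I would record the five-line formula for $\varphi_{e_i}$ acting on $e_1, e_2, e_3$, the middle generators $e_j$ ($4 \leq j \leq n-1$), and $e_n$.

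Next I would impose commutativity $\varphi_{e_i}(e_j) = \varphi_{e_j}(e_i)$ for all relevant index pairs $\{e_i, e_j\}$. As in the preceding theorems, this produces a long list of linear relations among the structure constants (the $\{e_1, e_j\}$, $\{e_2, e_j\}$ and $\{e_3, e_j\}$ pairs being the informative ones), which collapses the generic $\frac12$-derivation data into a compact commutative multiplication table on the generators $e_1, e_2, e_n$ together with forced values on the remaining basis vectors. I expect the resulting table to live in the span of $e_{n-2}$ and $e_{n-1}$ for most products, with $e_1 \cdot e_1$, $e_1 \cdot e_n$ and $e_n \cdot e_n$ carrying longer tails $\sum_{i} \alpha_i e_i$. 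I would then enforce \emph{associativity} $x \cdot (y \cdot z) = (x \cdot y) \cdot z$ on the key triples (those involving $e_1, e_2, e_3, e_n$), which pins down further constants and, crucially, forces $\alpha_1 = 0$ or similar degeneracy conditions separating the branches.

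The final step is the orbit analysis under the automorphism group. I would introduce the generic change of generators $e_1' = \sum A_t e_t$, $e_2' = \sum B_t e_t$, $e_n' = \sum C_t e_t$ (with the remaining basis vectors determined by the bracket, as in the $g^1_{(n,1)}$ proof), compute how the leading structure constants transform, and read off which parameters are invariant versus normalizable. The case split into ${\bf TP}_1$, ${\bf TP}_2$, ${\bf TP}_3$ should hinge on whether the coefficient of $e_2$ in $e_1 \cdot e_1$ is nonzero (giving ${\bf TP}_1$ after rescaling to $1$), and, when it vanishes, whether the coefficient of $e_3$ in $e_1 \cdot e_1$ survives (giving ${\bf TP}_2$ versus ${\bf TP}_3$). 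In each surviving branch I would apply an explicit rescaling transformation $\phi(e_i) = \lambda e_i$ to normalize the surviving leading coefficient to $1$ and absorb redundant parameters.

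The main obstacle I anticipate is bookkeeping in the associativity step: for general $n$ the triples $\{e_1, e_1, e_n\}$, $\{e_1, e_n, e_n\}$ and $\{e_2, e_1, e_1\}$ generate relations indexed by $i$ running over long ranges, and one must verify that the pattern $e_1 \cdot e_n = -\sum_{i} \alpha_{i-1} e_i + \cdots$ together with $e_n \cdot e_n = +\sum_i \alpha_{i-1} e_i + \cdots$ is genuinely forced (the sign flip and index shift mirror $\gamma_i = -\alpha_{i-1}$ from Theorem \ref{halfderiv2}). I would handle this by treating the recursion on $e_1 \cdot e_j$ uniformly via induction on $j$, exactly as the $\frac12$-derivation proof did, rather than checking each index separately, so that the $n \geq 7$ generic argument closes in one stroke.
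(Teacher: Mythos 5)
Your proposal follows essentially the same route as the paper's proof: invoking Theorem \ref{halfderiv2} to constrain each multiplication operator $\varphi_{e_i}$, imposing commutativity pairwise, then associativity on the triples $\{e_1,e_1,e_2\}$ and $\{e_1,e_1,e_n\}$, and finally splitting cases under the generator change of basis according to whether the coefficient of $e_2$ in $e_1\cdot e_1$ is nonzero (giving ${\bf TP}_1$) and, if it vanishes, whether the coefficient of $e_3$ survives (separating ${\bf TP}_2$ from ${\bf TP}_3$). This matches the paper's argument in both structure and the key normalizations.
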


\begin{proof}
Let $(g^2_{(n,1)}, \cdot, [-,-])$ be a transposed Poisson algebra structure defined on Lie algebra $g^2_{(n,1)}$. Then,
for any element $x \in g^2_{(n,1)} $, we have operator of multiplication $\varphi_x(y) = x \cdot y$ is a $\frac12$-derivation. Hence, for $1 \leq i \leq n$, we derive

$$\begin{cases}
\varphi_{e_i}(e_1)=\sum\limits_{j=1}^{n-1}\alpha_{i,j}e_j, \\
\varphi_{e_i}(e_2)=\alpha_{i,1}e_2+\beta_{i,n-2}e_{n-2}+\beta_{i,n-1}e_{n-1}, \\
\varphi_{e_i}(e_3)=\alpha_{i,1}e_3+\frac{1}{2}\beta_{i,n-2}e_{n-1}, \\
\varphi_{e_i}(e_j)=\alpha_{i,1}e_j, \  4\leq j\leq n-1,\\
\varphi_{e_i}(e_n)=-\sum\limits_{j=3}^{n-2}\alpha_{i,j-1}e_j+\gamma_{i,n-1}e_{n-1}+\alpha_{i,1}e_n.
 \end{cases}$$

It is known that $\varphi_{e_i}(e_j)=e_i\cdot e_j=e_j\cdot e_i=\varphi_{e_j}(e_i).$ Then for some pair elements, we derive the following restrictions:

\begin{longtable}{llllll}
    $\{e_1, e_2\}, $ & $\Rightarrow$ &$\alpha_{2,1}=0, \  \alpha_{2,2}=\alpha_{1,1}, \  \alpha_{2,j}=0, 3\leq j\leq n-3, $ \\

    & &$\alpha_{2,n-2}=\beta_{1,n-2}, \ \alpha_{2,n-1}=\beta_{1,n-1}, $\\
    
    $\{e_1, e_3\},$ & $\Rightarrow$ & $\alpha_{3,1}=\alpha_{3,2}=0, \  \alpha_{3,3}=\alpha_{1,1}, \  \alpha_{3,j}=0, 4\leq j\leq n-2,$ \\
      & & $\alpha_{3,n-1}=\frac{1}{2}\beta_{1,n-2},$  \\
    $\{e_1, e_j\}, $ & $\Rightarrow$ &$\alpha_{j,j}=\alpha_{1,1},\  \alpha_{j,t}=0, \  1\leq t\neq j\leq n-1,$  \\
   $4\leq j\leq n-1, $ & & \\
    $\{e_1, e_n\}, $ & $\Rightarrow$ & $\alpha_{n,1}=\alpha_{n,2}=0, \  \alpha_{n,t}=-\alpha_{1,t-1}, \ 3\leq t\leq n-2, $ \\
     & & $\alpha_{n,n-1}=\gamma_{1,n-1},\  \alpha_{1,1}=0,$ \\

 $\{e_2, e_3\}, $ & $\Rightarrow$ & $\beta_{3,n-2}=0, \ \beta_{3,n-1}=\frac{1}{2}\beta_{2,n-2},$  \\
 $\{e_2, e_j\},$ & $\Rightarrow $ & $\beta_{j,n-2}=\beta_{j,n-1}=0,$ \\
 $4\leq j\leq n-1,$ & &\\
 $\{e_2, e_n\},$ & $\Rightarrow $ & $\beta_{n,n-2}=0, \  \beta_{n,n-1}=\gamma_{2,n-1},$\\
 $ \{e_i, e_n\},$ & $\Rightarrow $ & $\gamma_{i,n-1}=0, \quad 3\leq i\leq n-1.$
\end{longtable}

Thus, we have the following table of commutative multiplications of the transposed Poisson algebra structure defined on  $g^2_{(n,1)}$:

\begin{equation}\label{2klas}
\begin{cases}
e_1\cdot e_1=\sum\limits_{i=2}^{n-1}\alpha_{i}e_i, \ e_1\cdot e_2=2\alpha_{1}e_{n-2}+\alpha_{n}e_{n-1}, \ e_1\cdot e_3=\alpha_{1}e_{n-1}, \\
e_1\cdot e_n=-\sum\limits_{i=3}^{n-2}\alpha_{i-1}e_i+\alpha_{n+1}e_{n-1},\ e_2\cdot e_2=2\alpha_{n+2}e_{n-2}+\alpha_{n+3}e_{n-1}, \\
e_2\cdot e_3=\alpha_{n+2}e_{n-1}, \ e_2\cdot e_n=\alpha_{n+4}e_{n-1},\ e_n\cdot e_n=\sum\limits_{i=4}^{n-2}\alpha_{i-1}e_i +\alpha_{n+5}e_{n-1}.
\end{cases}\end{equation}

Considering the associative identity $x\cdot (y\cdot z)=(x\cdot y)\cdot z$ in \eqref{2klas}, we obtain the following restrictions on structure constants:
$$\begin{array}{lll}
\{e_1,e_1,e_2\}, & \Rightarrow & \alpha_{2}\alpha_{n+2}=0, \ \alpha_{2}\alpha_{n+3}+ \alpha_{3}\alpha_{n+2}=0,  \\[1mm]
\{e_1,e_1,e_n\}, & \Rightarrow & \alpha_{1}\alpha_{3}+\alpha_{2}\alpha_{n+4}=0.
\end{array}$$

Similarly, by using the multiplication of Lie algebra $g^2_{(n,1)}$ we consider the general basis change:
$$e_1'=\sum\limits_{t=1}^{n}A_te_t, \ e_2'=\sum\limits_{t=1}^{n}B_te_t, \ e_n'=\sum\limits_{t=1}^{n}C_te_t.$$
Then the product $e_1'\cdot e_1'=\sum\limits_{i=2}^{n-1}\alpha_{i}'e_i'$ gives
$$\alpha_2'=\frac{A_1^2}{B_2}\alpha_2.$$
We have the following cases.

\begin{enumerate}
    \item Let $\alpha_2\neq0.$ Then by choosing $B_2=A_1^2\alpha_2$, we put $\alpha_2'=1.$ In this case we obtain the algebra ${\bf TP}_1(g^2_{(n,1)}).$

    \item Let $\alpha_2=0$. Then from above restrictions we have $\alpha_3\alpha_{n+2}=\alpha_1\alpha_3=0.$ If we apply a general change of basis, then we have
    $$\alpha_3'=\frac{A_1}{B_2}\alpha_3.$$

    \begin{enumerate}
        \item If $\alpha_3\neq 0,$ then $\alpha_3'=1$ and we have the algebra ${\bf TP}_2(g^2_{(n,1)}).$
        \item If $\alpha_3=0,$ then we have the algebra ${\bf TP}_3(g^2_{(n,1)}).$
    \end{enumerate}

\end{enumerate}

\end{proof}

\begin{theorem}
Let $(g^3_{(7,1)}, \cdot, [-,-])$ be a transposed Poisson algebra structure defined on Lie algebra
$g^3_{(7,1)}$. Then, the multiplication of $(g^3_{(7,1)}, \cdot)$ has the following form:

${\bf TP}_1(g^3_{(7,1)}):\ \begin{cases}
e_1\cdot e_1=\alpha_{4}e_4+ \alpha_{5}e_5, \ e_1\cdot e_2=2\alpha_{6}e_5 +\alpha_{7}e_6,\ e_1\cdot e_3=\alpha_{6}e_6, \\[1mm]
e_1\cdot e_7=-\alpha_{4}e_5+ \alpha_{8}e_6, \ e_2\cdot e_2=2\alpha_{1}e_5+\alpha_{9}e_6,\ e_2\cdot e_3=\alpha_{1}e_6, \\[1mm]
e_2\cdot e_7=\alpha_{10}e_6,\ e_7\cdot e_7=\alpha_{11}e_6; \end{cases}$

${\bf TP}_2(g^3_{(7,1)}):\ \begin{cases}
e_1\cdot e_1=2e_3+\alpha_{4}e_4+\alpha_{5}e_5, \ e_1\cdot e_2=2\alpha_{6}e_5 +\alpha_{7}e_6,\ e_1\cdot e_3=(\alpha_{6}-1)e_6, \\[1mm]
e_1\cdot e_7=-2e_4-\alpha_{4}e_5+\alpha_{8}e_6, \ e_2\cdot e_2=\alpha_{9}e_6,\ e_2\cdot e_7=\alpha_{10}e_6,\\[1mm]
e_7\cdot e_7=2e_5+\alpha_{11}e_6;
\end{cases}$\\
where the transposed Poisson algebra has its products with respect to the bracket $[-,-]$, and remaining products are equal to zero.

\end{theorem}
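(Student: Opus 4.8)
The plan is to reuse the machinery developed for the earlier families, specialised to the case $n=7$. By Lemma~\ref{lemma1}, the data of a transposed Poisson product $\cdot$ on $g^3_{(7,1)}$ is the same as a commutative associative multiplication all of whose left translations $\varphi_{e_i}(y)=e_i\cdot y$ are $\frac12$-derivations. So first I would apply the $n=7$ branch of Theorem~\ref{halfderiv3} to each $\varphi_{e_i}$, writing $\varphi_{e_i}(e_1),\dots,\varphi_{e_i}(e_7)$ in the prescribed six-parameter form with fresh structure constants $\alpha_{i,j},\beta_{i,j},\gamma_{i,j}$; this records every product $e_i\cdot e_j$ as an explicit combination of these constants.

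Then I would enforce commutativity through $\varphi_{e_i}(e_j)=e_i\cdot e_j=e_j\cdot e_i=\varphi_{e_j}(e_i)$, running over all pairs $\{e_i,e_j\}$ and matching coefficients on each basis vector. As in the previous proofs this collapses the bulk of the constants: it identifies the scalar $\alpha_{i,1}$ across the rows, forces the expected vanishings, and expresses $\varphi_{e_i}(e_3),\dots,\varphi_{e_i}(e_6)$ through a short list of surviving parameters. Assembling what remains yields the general commutative multiplication table of $(g^3_{(7,1)},\cdot)$.

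Next I would impose associativity $x\cdot(y\cdot z)=(x\cdot y)\cdot z$ and evaluate it on the informative triples built from the generators $e_1,e_2,e_7$ together with $e_3$, where the brackets $[e_2,e_3]=e_6$ and $[e_3,e_7]=e_5$ couple the new product to the Lie structure. I expect these relations to annihilate the components of $e_1\cdot e_1$ along $e_1$ and $e_2$ (the $n=7$ analogue of the $\alpha_1=\alpha_6=0$ collapse found for $g^2_{(5,1)}$, these $e_1,e_2$ terms appearing only because $\varphi(e_1)$ carries the extra $\alpha_2 e_2$ summand peculiar to this low dimension) and to leave polynomial relations tying the remaining constants together.

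Finally I would run a general change of the three generators, $e_1'=\sum_{t}A_t e_t$, $e_2'=\sum_{t}B_t e_t$, $e_7'=\sum_{t}C_t e_t$, with the remaining basis vectors fixed by the bracket, and track how the surviving leading constant transforms. The coefficient of $e_3$ in $e_1'\cdot e_1'$ rescales by a nonzero factor, so its vanishing is a basis-free dichotomy: the zero case gives ${\bf TP}_1(g^3_{(7,1)})$, while in the nonzero case I normalise this coefficient to the value appearing in the statement and feed the normalisation back into the associativity relations to obtain ${\bf TP}_2(g^3_{(7,1)})$. The main obstacle will be the associativity bookkeeping: the constants are numerous, each generator mixes several directions, and one must verify enough triples to guarantee that the two normal forms are exhaustive while correctly propagating the nonlinear action of the basis change on the surviving parameters.
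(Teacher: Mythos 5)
Your plan coincides with the paper's proof: parametrise each left multiplication $\varphi_{e_i}$ via the $n=7$ case of Theorem~\ref{halfderiv3}, impose commutativity over all pairs, impose associativity on the triples involving the generators $e_1,e_2,e_7$ (which indeed yields $\alpha_2=0$ and $\alpha_1\alpha_3=0$), and then use the general change of the three generators to show the $e_3$-coefficient of $e_1\cdot e_1$ is an invariant whose vanishing separates ${\bf TP}_1$ from ${\bf TP}_2$. The only cosmetic difference is that the paper kills the $e_1$-component of $e_1\cdot e_1$ already at the commutativity stage (from the pair $\{e_1,e_7\}$) rather than via associativity, but this does not change the argument.
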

\begin{proof} Let $(g^3_{(7,1)}, \cdot, [-,-])$ be a transposed Poisson algebra structure defined on Lie algebra $g^3_{(7,1)}$. Then
for any element $x \in g^3_{(7,1)} $, we have that operator of multiplication $\varphi_x(y) = x \cdot y$ is a $\frac12$-derivation. Hence, for $1 \leq i \leq 7$ by Theorem \ref{halfderiv3} we derive the following:

$$\begin{cases}
\varphi_{e_i}(e_1)=\alpha_{i,1}e_1+\alpha_{i,2}e_2+ \alpha_{i,3}e_3+\alpha_{i,4}e_4+\alpha_{i,5}e_5+\alpha_{i,6}e_6, \\
\varphi_{e_i}(e_2)=\alpha_{i,1}e_2+4\alpha_{i,2}e_4+ \beta_{i,5}e_5+\beta_{i,6}e_6,\\
\varphi_{e_i}(e_3)=\alpha_{i,1}e_3+2\alpha_{i,2}e_5+ \frac{1}{2}(\beta_{i,5}-\alpha_{i,3})e_6, \\
\varphi_{e_i}(e_4)=\alpha_{i,1}e_4+\frac{3}{2}\alpha_{i,2}e_6, \\
\varphi_{e_i}(e_5)=\alpha_{i,1}e_5, \\
\varphi_{e_i}(e_6)=\alpha_{i,1}e_6, \\
\varphi_{e_i}(e_7)=-\alpha_{i,2}e_3-\alpha_{i,3}e_4-\alpha_{i,4}e_5+\gamma_{i,6}e_6+\alpha_{i,1}e_7, \\
  \end{cases}$$

Considering the property $\varphi_{e_i}(e_j)=e_i\cdot e_j=e_j\cdot e_i=\varphi_{e_j}(e_i)$, we obtain the following restrictions:

 $$\begin{array}{lll}
    \{e_1, e_2\}, & \Rightarrow & \alpha_{2,1}=0, \  \alpha_{2,2}=\alpha_{1,1}, \  \alpha_{2,3}=0, \ \alpha_{2,4}=4\alpha_{1,2}, \ \alpha_{2,5}=\beta_{1,5}, \ \alpha_{2,6}=\beta_{1,6},\\[1mm]
    \{e_1, e_3\}, & \Rightarrow & \alpha_{3,1}=\alpha_{3,2}=0, \  \alpha_{3,3}=\alpha_{1,1}, \  \alpha_{3,4}=-2\alpha_{1,6}, \ \alpha_{3,5}=2\alpha_{1,2}, \ \alpha_{3,6}=\frac{1}{2}(\beta_{1,5}-\alpha_{1,3}),  \\[1mm]
    \{e_1, e_4\}, & \Rightarrow & \alpha_{4,1}=\alpha_{4,2}=\alpha_{4,3}=0, \  \alpha_{4,4}=\alpha_{1,1}, \  \alpha_{4,5}=0, \ \alpha_{4,6}=\frac{3}{2}\alpha_{1,2},  \\[1mm]
    \{e_1, e_5\}, & \Rightarrow & \alpha_{5,1}=\alpha_{5,2}=\alpha_{5,3}=\alpha_{5,4}=0, \ \alpha_{5,5}=\alpha_{1,1}, \ \alpha_{5,6}=0,  \\[1mm]
    \{e_1, e_6\}, & \Rightarrow & \alpha_{6,1}=\alpha_{6,2}=\alpha_{6,3}=\alpha_{6,4}=\alpha_{6,5}=0, \ \alpha_{6,6}=\alpha_{1,1},  \\[1mm]
   \{e_1, e_7\}, & \Rightarrow & \alpha_{1,1}=0, \ \alpha_{7,1}=\alpha_{7,2}=0, \ \alpha_{7,3}=-\alpha_{1,2}, \ \alpha_{7,4}=-\alpha_{1,3}, \ \alpha_{7,5}=-\alpha_{1,4}, \ \alpha_{7,6}=\gamma_{1,6},  \\[1mm]
\{e_2, e_3\}, & \Rightarrow & \beta_{3,5}=0, \ \beta_{3,6}=\frac{1}{2}\beta_{2,5},  \\[1mm]
 \{e_2, e_4\}, & \Rightarrow & \beta_{4,5}=\beta_{4,6}=0,  \\[1mm]
 \{e_2, e_5\}, & \Rightarrow & \beta_{5,5}=\beta_{5,6}=0,\\[1mm]
 \{e_2, e_6\}, & \Rightarrow & \beta_{6,5}=\beta_{6,6}=0,\\[1mm]
 \{e_2, e_7\}, & \Rightarrow & \beta_{7,5}=-4\alpha_{1,2}, \  \beta_{7,6}=\gamma_{2,6},\\[1mm]
\{e_3, e_7\}, & \Rightarrow & \alpha_{1,6}=0, \ \gamma_{3,6}=-\frac{3}{2}\alpha_{1,2},\\[1mm]
\{e_4, e_7\}, & \Rightarrow & \gamma_{4,6}=0,\\[1mm]
\{e_5, e_7\}, & \Rightarrow & \gamma_{5,6}=0,\\[1mm]
\{e_6, e_7\}, & \Rightarrow & \gamma_{6,6}=0.\\[1mm]
    \end{array}$$

Thus, we have the following table of commutative multiplications of the transposed Poisson algebra structure $(g^3_{(7,1)},, \cdot, [-,-])$:

$$\begin{cases}
e_1\cdot e_1=2\alpha_{2}e_2+2\alpha_{3}e_3+ \alpha_{4}e_4+\alpha_{5}e_5, \ e_1\cdot e_2=8\alpha_{2}e_4+2\alpha_{6}e_5 +\alpha_{7}e_6,\\[1mm]
e_1\cdot e_3=4\alpha_{2}e_5+(\alpha_{6}-\alpha_{3})e_6, \ e_1\cdot e_4=3\alpha_{2}e_6, \ e_1\cdot e_7=-2\alpha_{2}e_3-2\alpha_{3}e_4-\alpha_{4}e_5+\alpha_{8}e_6, \\[1mm]
e_2\cdot e_2=2\alpha_{1}e_5+\alpha_{9}e_6,\ e_2\cdot e_3=\alpha_{1}e_6, \ e_2\cdot e_7=-8\alpha_{2}e_5+\alpha_{10}e_6,\\[1mm]
e_3\cdot e_7=-3\alpha_{2}e_6, \ e_7\cdot e_7=2\alpha_{2}e_4+2\alpha_{3}e_5 +\alpha_{11}e_6.
\end{cases}$$

Considering the associative identity  for the triples $\{e_1,e_1,e_7\}$ and $\{e_1,e_1,e_2\}$, we obtain the following restrictions, respectively:
$$\alpha_2=0, \ \alpha_{1}\alpha_{3}=0.$$

Using  multiplication of Lie algebra $g^3_{(7,1)}$ and multiplication $e_1\cdot e_1=2\alpha_{3}e_3+ \alpha_{4}e_4+\alpha_{5}e_5$, and also consider a general change of basis, we can show that $\alpha_3$ is an invariant.
%
%

If $\alpha_{3}=0.$ Then, we have the algebra ${\bf TP}_1(g^3_{(7,1)}).$

If $\alpha_{3}\neq 0.$ Then, we obtain $\alpha_{1}=0$. By choosing change of basis, we can assume that $\alpha_3=1$ and obtain the algebra ${\bf TP}_2(g^3_{(7,1)}).$
\end{proof}

\begin{theorem}\label{thm3klas}
Let $(g^3_{(n,1)}, \cdot, [-,-])$ be a transposed Poisson algebra structure defined on Lie algebra $g^3_{(n,1)}$ and $n\geq 8$. Then, the multiplication of $(g^3_{(n,1)}, \cdot)$ has the following form:

${\bf TP}(g^3_{(8,1)}):\ \begin{cases}
e_1\cdot e_1=\sum\limits_{t=4}^{7} \alpha_{t}e_t, \ e_1\cdot e_2=\beta_{1}e_{6}+\beta_{2}e_{7}, \ e_1\cdot e_3=\frac{1}{2}(\beta_{1}-\alpha_{4})e_{7}, \\[1mm]
e_1\cdot e_8=-\alpha_{4}e_5-\alpha_{5}e_6 +\gamma_{1}e_{7},\ e_2\cdot e_2=\beta_{3}e_{6}+\beta_{4}e_{7}, \\[1mm]
e_2\cdot e_3=\frac{1}{2}\beta_{3}e_{7}, \ e_2\cdot e_8=\gamma_{2}e_{7},\ e_8\cdot e_8=\alpha_{4}e_6+\gamma_{3}e_{7};\end{cases}$

${\bf TP}_1(g^3_{(9,1)}):\ \begin{cases}
e_1\cdot e_1=\sum\limits_{t=4}^{8} \alpha_{t}e_t, \ e_1\cdot e_2=\beta_{1}e_{7}+\beta_{2}e_{8}, \ e_1\cdot e_3=\frac{1}{2}(\beta_{1}-\alpha_{5})e_{8}, \\[1mm]
e_1\cdot e_9=-\sum\limits_{t=5}^{7}\alpha_{t-1}e_t +\gamma_{1}e_{8},\ e_2\cdot e_2=\beta_{3}e_{7}+\beta_{4}e_{8}, \ e_2\cdot e_3=\frac{1}{2}\beta_{3}e_{8}, \\[1mm]
e_2\cdot e_9=-\alpha_{4}e_7+\gamma_{2}e_{8},\ e_3\cdot e_9=\frac12\alpha_{4}e_{8}, \ e_9\cdot e_9=\sum\limits_{t=6}^{7}\alpha_{t-2}e_t+\gamma_{3}e_{8};\end{cases}$

${\bf TP}_2(g^3_{(9,1)}):\ \begin{cases}
e_1\cdot e_1=e_3+\sum\limits_{t=5}^{8} \alpha_{t}e_t, \ e_1\cdot e_2=e_5+\beta_{1}e_{7}+ \beta_{2}e_{8}, \\[1mm]
e_1\cdot e_3=\frac{1}{2}(\beta_{1}-\alpha_{5})e_{8}, \ e_1\cdot e_9=-e_4-\sum\limits_{t=6}^{7}\alpha_{t-1}e_t +\gamma_{1}e_{8},\\[1mm]
e_2\cdot e_2=e_{7}+\beta_{4}e_{8}, \ e_2\cdot e_9=-e_6+\gamma_{2}e_{8},\ e_9\cdot e_9=e_5+\alpha_{5}e_7 +\gamma_{3}e_{8};
\end{cases}$

${\bf TP}_1(g^3_{(n,1)}):\ \begin{cases}
e_1\cdot e_1=\sum\limits_{t=4}^{n-1} \alpha_{t}e_t, \ e_1\cdot e_2=\sum\limits_{t=6}^{n-3}\alpha_{t-2}e_t +\beta_{1}e_{n-2}+\beta_{2}e_{n-1}, \\[1mm]
e_1\cdot e_3=\frac{1}{2}(\beta_{1}-\alpha_{n-4})e_{n-1}, \ e_1\cdot e_n=-\sum\limits_{t=5}^{n-2}\alpha_{t-1}e_t +\gamma_{1}e_{n-1},\\[1mm]
e_2\cdot e_2=\sum\limits_{t=8}^{n-3}\alpha_{t-4}e_t +\beta_{3}e_{n-2}+\beta_{4}e_{n-1}, \ e_2\cdot e_3=\frac{1}{2}(\beta_{3}-\alpha_{n-6})e_{n-1}, \\[1mm]
e_2\cdot e_n=-\sum\limits_{t=7}^{n-2}\alpha_{t-3}e_t +\gamma_{2}e_{n-1},\ e_3\cdot e_n=\frac12\alpha_{n-5}e_{n-1}, \
e_n\cdot e_n=\sum\limits_{t=6}^{n-2}\alpha_{t-2}e_t+\gamma_{3}e_{n-1}; \end{cases}$

${\bf TP}_2(g^3_{(n,1)}):\ \begin{cases}
e_1\cdot e_1=e_3+\sum\limits_{t=4}^{n-6} \alpha_{t}e_t+\sum\limits_{t=n-4}^{n-1} \alpha_{t}e_t, \ e_1\cdot e_2=e_5+\sum\limits_{t=5}^{n-4}\alpha_{t-2}e_t +\beta_{1}e_{n-2}+\beta_{2}e_{n-1}, \\[1mm]
e_1\cdot e_3=\frac{1}{2}(\beta_{1}-\alpha_{n-4})e_{n-1}, \ e_1\cdot e_n=-e_4-\sum\limits_{t=5}^{n-5}\alpha_{t-1}e_t-\sum\limits_{t=n-3}^{n-2}\alpha_{t-1}e_t +\gamma_{1}e_{n-1},\\[1mm]
e_2\cdot e_2=e_7+\sum\limits_{t=8}^{n-2}\alpha_{t-4}e_t+\beta_{4}e_{n-1}, \ e_2\cdot e_n=-e_6-\sum\limits_{t=7}^{n-3}\alpha_{t-3}e_t +\gamma_{2}e_{n-1},\\[1mm]
e_n\cdot e_n=e_5+\sum\limits_{t=6}^{n-4}\alpha_{t-2}e_t+\alpha_{n-4}e_{n-2}+\gamma_{3}e_{n-1};\end{cases}$\\
where the transposed Poisson algebra has its products with respect to the bracket $[-,-]$, and remaining products are equal to zero.
\end{theorem}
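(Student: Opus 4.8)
The plan is to follow the strategy used throughout the preceding theorems. By Lemma \ref{lemma1}, for every $x \in g^3_{(n,1)}$ the left multiplication $\varphi_x(y) = x \cdot y$ is a $\frac12$-derivation, so for each basis element $e_i$ I would write $\varphi_{e_i}$ in the general shape supplied by Theorem \ref{halfderiv3} (the $n \geq 8$ branch), introducing constants $\alpha_{i,j}, \beta_{i,j}, \gamma_{i,j}$. The first step is to impose commutativity, $\varphi_{e_i}(e_j) = e_i \cdot e_j = e_j \cdot e_i = \varphi_{e_j}(e_i)$, over all pairs $\{e_i, e_j\}$ and compare coefficients at each basis vector. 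This produces a large but linear system; in particular the pair $\{e_1, e_n\}$ forces $\alpha_{1,1} = 0$ (the $e_n$-coefficient on one side has no counterpart on the other), and the remaining relations express almost all constants in terms of the few appearing in $e_1 \cdot e_1$, $e_1 \cdot e_2$ and $e_2 \cdot e_2$. The outcome is a single explicit commutative multiplication table depending on a handful of free parameters.

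Next I would impose associativity through $x \cdot (y \cdot z) = (x \cdot y) \cdot z$ on the natural triples, the analogues of $\{e_1, e_1, e_2\}$, $\{e_1, e_1, e_3\}$ and $\{e_1, e_1, e_n\}$ used in the proof for $g^3_{(7,1)}$. These identities yield polynomial relations among the surviving structure constants---typically of the form $\alpha_2\alpha_{\ast} = 0$ together with a relation $\alpha_1\alpha_3 = 0$ tying the $e_3$-coefficient of $e_1 \cdot e_1$ to a coefficient of $e_2 \cdot e_2$---which prune the parameter space and set up the case analysis. The normal forms then emerge from a general change of generators $e_1' = \sum_t A_t e_t$, $e_2' = \sum_t B_t e_t$, $e_n' = \sum_t C_t e_t$, with the other new basis vectors determined by the bracket exactly as in the earlier proofs. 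Tracking the transformation of the $e_3$-coefficient of $e_1' \cdot e_1'$ shows it rescales by a nonzero scalar and is therefore a zero-invariant; its vanishing or non-vanishing is precisely what separates ${\bf TP}_1(g^3_{(n,1)})$ from ${\bf TP}_2(g^3_{(n,1)})$. In the non-vanishing branch I would normalize this coefficient to $1$, invoke the relation $\alpha_1\alpha_3 = 0$ to conclude the companion constant vanishes, and read off ${\bf TP}_2$; the vanishing branch delivers ${\bf TP}_1$.

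The genuine difficulty is bookkeeping rather than conceptual. For general $n$ the summation ranges $\sum_{t=6}^{n-3}$, $\sum_{t=8}^{n-3}$, $\sum_{t=5}^{n-2}$ and their index shifts interact delicately, and the low-dimensional cases $n = 8$ and $n = 9$ are genuinely exceptional: several of these ranges degenerate or collide, so that the $e_2 \cdot e_2$, $e_2 \cdot e_n$ and $e_n \cdot e_n$ products gain or lose terms relative to the generic pattern (for instance, for $n=9$ the expected $\alpha_4 e_6$ term of $e_1 \cdot e_2$ is annihilated). This is exactly why the statement records a lone form ${\bf TP}(g^3_{(8,1)})$---where the associativity constraints force the $e_3$-coefficient of $e_1 \cdot e_1$ to vanish, so no second branch survives---and a separate pair ${\bf TP}_1(g^3_{(9,1)}), {\bf TP}_2(g^3_{(9,1)})$. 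I would therefore carry out the argument for $n = 8$, $n = 9$ and $n \geq 10$ as distinct subcases, verifying in each that the commutativity and associativity constraints, together with the invariance of the $e_3$-coefficient, yield exactly the tabulated multiplications.
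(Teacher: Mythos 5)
Your proposal follows essentially the same route as the paper: write each left-multiplication operator in the form given by Theorem \ref{halfderiv3}, impose commutativity pairwise to reduce to a single parametrized table, extract the associativity relations (in the paper these are $\alpha_3(\beta_3-\alpha_{n-6})=0$ and $\alpha_3\alpha_{n-5}=0$), observe that the $e_3$-coefficient of $e_1\cdot e_1$ rescales by $A_1/B_2$ under the general change of generators and is therefore a zero-invariant, and split into the cases $n=8$ (where $\alpha_3$ is forced to vanish), $n=9$, and $n\geq 10$. The plan is sound and matches the paper's argument in all essential steps.
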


\begin{proof}
Let $(g^3_{(n,1)}, \cdot, [-,-])$ be a transposed Poisson algebra structure defined on Lie algebra $g^3_{(n,1)}$. Then
for any element $x \in g^3_{(n,1)} $ we have that, operator of multiplication $\varphi_x(y) = x \cdot y$ is a $\frac12$ -derivation. Hence, for $1 \leq i \leq n$ we derive

$$\begin{cases}
\varphi_{e_i}(e_1)=\alpha_{i,1}e_1+\sum\limits_{t=3}^{n-1}\alpha_{i,t}e_t, \\
\varphi_{e_i}(e_2)=\alpha_{i,1}e_2+\sum\limits_{t=5}^{n-3}\alpha_{i,t-2}e_t+\beta_{i,n-2}e_{n-2}+\beta_{i,n-1}e_{n-1}, \\
\varphi_{e_i}(e_3)=\alpha_{i,1}e_3+\frac{1}{2}(\beta_{i,n-2}-\alpha_{i,n-4})e_{n-1}, \\
\varphi_{e_i}(e_j)=\alpha_{i,1}e_j, \  4\leq j\leq n-1,\\
\varphi_{e_i}(e_n)=-\sum\limits_{t=4}^{n-2}\alpha_{i,t-1}e_t+\gamma_{i,n-1}e_{n-1}+\alpha_{i,1}e_n.
 \end{cases}$$

It is known that $\varphi_{e_i}(e_j)=e_i\cdot e_j=e_j\cdot e_i=\varphi_{e_j}(e_i).$ Then for some pair elements, we derive the following restrictions:

\begin{longtable}{lllll}
    $\{e_1, e_2\},$ & $\Rightarrow$ &$\alpha_{1,1}=\alpha_{2,1}=\alpha_{2,3}=\alpha_{2,4}=0, \ \alpha_{2,t}=\alpha_{1,t-2},\  5\leq t\leq n-3, $\\
    & &$\alpha_{2,n-2}=\beta_{1,n-2}, \ \alpha_{2,n-1}=\beta_{1,n-1},$ \\
    $\{e_1, e_3\},$ & $\Rightarrow$ & $\alpha_{3,1}=0, \ \alpha_{3,t}=0, \ 3\leq t\leq n-2,\   \alpha_{3,n-1}=\frac{1}{2}(\beta_{1,n-2}-\alpha_{1,n-4}), $\\
    $ \{e_1, e_j\}, $ & $\Rightarrow $& $\alpha_{j,1}=0, \ \alpha_{j,t}=0, \  3\leq t\leq n-1, $ \\
   $4\leq j\leq n-1,$ & & \\
   $ \{e_1, e_n\},$ & $\Rightarrow$ & $\alpha_{n,1}=\alpha_{n,3}=0, \  \alpha_{n,t}=-\alpha_{1,t-1}, \ 4\leq t\leq n-2, \  \alpha_{n,n-1}=\gamma_{1,n-1}, $ \\
 $\{e_2, e_3\},$ & $\Rightarrow$ & $\beta_{3,n-2}=0, \ \beta_{3,n-1}=\frac{1}{2}(\beta_{2,n-2}-\alpha_{1,n-6}), $ \\
 $\{e_2, e_j\},$ & $\Rightarrow$ & $\beta_{j,n-2}=\beta_{j,n-1}=0,$ \\
 $4\leq j\leq n-1,$& &\\
 $\{e_2, e_n\},$ & $\Rightarrow$ &$ \beta_{n,n-2}=-\alpha_{1,n-5}, \  \beta_{n,n-1}=\gamma_{2,n-1},$\\
 $ \{e_3, e_n\},$ & $\Rightarrow$ & $\gamma_{3,n-1}=\frac{1}{2}\alpha_{1,n-5}, $ \\
$   \{e_i, e_n\},$ & $\Rightarrow $& $\gamma_{i,n-1}=0, $\\
 $4\leq i\leq n-1.$
\end{longtable}

So, we have the following table of commutative multiplications of the transposed Poisson algebra structure $(g^3_{(n,1)},, \cdot, [-,-])$

For $n=8:$

$$\begin{cases}
e_1\cdot e_1=\sum\limits_{t=3}^{7} \alpha_{t}e_t, \ e_1\cdot e_2=\alpha_{3}e_5+ \beta_{1}e_{6}+\beta_{2}e_{7}, \ e_1\cdot e_3=\frac{1}{2}(\beta_{1}-\alpha_{4})e_{7}, \\[1mm]
e_1\cdot e_8=-\sum\limits_{t=4}^{6}\alpha_{t-1}e_t +\gamma_{1}e_{7},\ e_2\cdot e_2=\beta_{3}e_{6}+\beta_{4}e_{7}, \ e_2\cdot e_3=\frac{1}{2}\beta_{3}e_{7}, \\[1mm]
e_2\cdot e_8=-\alpha_{3}e_6 +\gamma_{2}e_{7},\ e_3\cdot e_8=\frac12\alpha_{3}e_{7},\ e_8\cdot e_8=\sum\limits_{t=5}^{6}\alpha_{t-2}e_t+\gamma_{3}e_{7};
\end{cases} $$

For $n\geq9:$
$$\begin{cases}
e_1\cdot e_1=\sum\limits_{t=3}^{n-1} \alpha_{t}e_t, \ e_1\cdot e_2=\sum\limits_{t=5}^{n-3}\alpha_{t-2}e_t +\beta_{1}e_{n-2}+\beta_{2}e_{n-1}, \ e_1\cdot e_3=\frac{1}{2}(\beta_{1}-\alpha_{n-4})e_{n-1}, \\[1mm]
e_1\cdot e_n=-\sum\limits_{t=4}^{n-2}\alpha_{t-1}e_t +\gamma_{1}e_{n-1},\ e_2\cdot e_2=\sum\limits_{t=7}^{n-3}\alpha_{t-4}e_t +\beta_{3}e_{n-2}+\beta_{4}e_{n-1}, \\[1mm]
e_2\cdot e_3=\frac{1}{2}(\beta_{3}-\alpha_{n-6})e_{n-1}, \ e_2\cdot e_n=-\sum\limits_{t=6}^{n-2}\alpha_{t-3}e_t +\gamma_{2}e_{n-1},\\
e_3\cdot e_n=\frac12\alpha_{n-5}e_{n-1}, \ e_n\cdot e_n=\sum\limits_{t=5}^{n-2}\alpha_{t-2}e_t+\gamma_{3}e_{n-1}.\\
\end{cases}
$$

Considering the associative identity  for the triples $\{e_1,e_1,e_2\}$ and $\{e_1,e_1,e_n\}$, we obtain the following restrictions, respectively:

\begin{equation}\label{gn3}\alpha_{3}(\beta_{3}-\alpha_{n-6})=0, \ \alpha_{3}\alpha_{n-5}=0.\end{equation}

We have the following cases.

\begin{enumerate}
    \item Let $n=8.$ Then we get $\alpha_3=0$ and we have the algebra ${\bf TP}(g^3_{(8,1)})$.

    \item Let $n\geq 9$. Then by using the multiplication of the Lie algebra $g^3_{(n,1)}$ we consider the general basis change:
$$e_1'=\sum\limits_{t=1}^{n}A_te_t, \ e_2'=\sum\limits_{t=1}^{n}B_te_t, \ e_n'=\sum\limits_{t=1}^{n}C_te_t.$$
The product $e_1'\cdot e_1'=\sum\limits_{i=3}^{n-1}\alpha_{i}'e_i'$ gives
$$\alpha_3'=\frac{A_1}{B_2}\alpha_3.$$
\begin{enumerate}
        \item If $\alpha_{3}=0.$ Then for $n=9$ we derive the algebra ${\bf TP}_1(g^3_{(9,1)})$ and for the case when $n\geq 10$ we obtain ${\bf TP}_1(g^3_{(n,1)})$.
        \item If $\alpha_{3}\neq 0.$ Then  by choosing $B_2=A_1\alpha_3$ we can assume $\alpha_{3}'=1$ and from the restrictions \eqref{gn3} we get $\alpha_{n-5}=0, \beta_3'=\alpha_{n-6}$. In this case for the $n=9$ we obtain ${\bf TP}_2(g^3_{(9,1)})$ and for $n\geq 10$ we have the algebra ${\bf TP}_2(g^3_{(n,1)})$.
    \end{enumerate}
\end{enumerate}

\end{proof}

\begin{theorem}\label{thm4klas} Let $(g^1_{7}, \cdot, [-,-])$ be a transposed Poisson algebra structure defined on Lie algebra
$g^1_{7}$. Then, the multiplication of $(g^1_{7}, \cdot)$ has the following form:

${\bf TP}_1(g^1_{7}): \ \begin{cases}
e_1\cdot e_1=\alpha_{4}e_4 +\alpha_{5}e_5 +\alpha_{6}e_6+\alpha_{7}e_7, \ e_1\cdot e_2=\alpha_{1}e_5+\alpha_{2}e_6+\alpha_{8}e_7,\\[1mm]
e_1\cdot e_3=\frac{1}{2}(\alpha_{1}-\alpha_{4})e_6-\frac{1}{2}\alpha_{5}e_7, \ e_1\cdot e_4=\frac{1}{2}\alpha_{4}e_7, \\[1mm]
e_2\cdot e_2=\alpha_{9}e_5+\alpha_{10}e_6+ \alpha_{11}e_7,\ e_2\cdot e_3=\frac{1}{2}\alpha_{9}e_6-\frac{1}{2}\alpha_{1}e_7;
\end{cases}$

${\bf TP}_2(g^1_{7}): \ \begin{cases}
e_1\cdot e_1=6e_3+\alpha_{4}e_4 +\alpha_{5}e_5+\alpha_{6}e_6+\alpha_{7}e_7, \ e_1\cdot e_2=-2e_4+ \alpha_{1}e_5+ \alpha_{2}e_6+\alpha_{8}e_7,\\[1mm]
e_1\cdot e_3=-4e_5+\frac{1}{2}(\alpha_{1}-\alpha_{4})e_6-\frac{1}{2}\alpha_{5}e_7, \ e_1\cdot e_4=-2e_6+ \frac{1}{2}\alpha_{4}e_7, \ e_1\cdot e_5=-3e_7, \\[1mm]
e_2\cdot e_2=-\frac23e_5+\alpha_{10}e_6+ \alpha_{11}e_7,\ e_2\cdot e_3=\frac{2}{3}e_6-\frac{1}{2}\alpha_{1}e_7, \  e_2\cdot e_4=-e_7, \  e_3\cdot e_3=2e_7;
\end{cases}$\\
where the transposed Poisson algebra has its products with respect to the bracket $[-,-]$, and remaining products are equal to zero.
\end{theorem}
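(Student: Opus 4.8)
The plan is to reproduce, for $g^1_7$, the scheme already used above for $g^1_{(5,1)}$ and $g^3_{(7,1)}$. By Lemma \ref{lemma1}, in any transposed Poisson structure $(g^1_7,\cdot,[-,-])$ each left multiplication $\varphi_{e_i}(y)=e_i\cdot y$ is a $\frac12$-derivation of $g^1_7$. First I would invoke Theorem \ref{halfderiv4} to write every $\varphi_{e_i}$ in the explicit form prescribed there, tagging each structure constant with the extra index $i$; thus $\varphi_{e_i}(e_1)=\alpha_{i,1}e_1+\alpha_{i,3}e_3+\dots+\alpha_{i,7}e_7$, $\varphi_{e_i}(e_2)=\alpha_{i,1}e_2-\tfrac13\alpha_{i,3}e_4+\beta_{i,5}e_5+\beta_{i,6}e_6+\beta_{i,7}e_7$, and so on down to $\varphi_{e_i}(e_7)=\alpha_{i,1}e_7$. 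This encodes the whole (not yet symmetric) left-multiplication table in terms of the $\alpha_{i,j},\beta_{i,j}$.

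Second, I would impose commutativity. Since $\varphi_{e_i}(e_j)=e_i\cdot e_j=e_j\cdot e_i=\varphi_{e_j}(e_i)$, comparing coefficients for each pair $\{e_i,e_j\}$ produces a system of linear identifications among the $\alpha_{i,j},\beta_{i,j}$ that expresses $\varphi_{e_3},\dots,\varphi_{e_7}$ entirely through the data of $\varphi_{e_1}$ and $\varphi_{e_2}$ (the two generators), exactly as in the restriction tables of the earlier proofs. Collecting these relations yields a single commutative multiplication table depending on a short list of free constants, in which $e_1\cdot e_1$ carries a distinguished coefficient of $e_3$; I will call it $\alpha_3$.

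Third, I would impose the associative law $x\cdot(y\cdot z)=(x\cdot y)\cdot z$. As happened for $g^2_{(5,1)}$, evaluating it on the triples $\{e_1,e_1,e_7\}$ and $\{e_1,e_1,e_2\}$ (and a few of the form $\{e_1,e_1,e_j\}$, $\{e_2,e_2,e_j\}$) should kill the diagonal constant $\alpha_{1,1}$ and reduce the remaining parameters to the two listed families. To split them I would verify that $\alpha_3$ is a scaling invariant: with the generic basis change $e_1'=\sum_{j}A_je_j$, $e_2'=\sum_{j}B_je_j$, $e_{i+1}'=[e_1',e_i']$ for $2\le i\le5$ and $e_7'=[e_2',e_5']$, the only bracket producing $e_3$ is $[e_1,e_2]$, so $e_3'=A_1B_2e_3+\dots$ while, once $\alpha_{1,1}=0$, the $e_3$-component of $e_1'\cdot e_1'$ is $A_1^2\alpha_3$; hence $\alpha_3'=\tfrac{A_1}{B_2}\alpha_3$ and the condition $\alpha_3=0$ is basis-free. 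When $\alpha_3=0$ the surviving constraints give ${\bf TP}_1(g^1_7)$; when $\alpha_3\neq0$ a choice of $A_1,B_2$ normalizes it, after which the associativity relations pin the coefficients of $e_1\cdot e_3$, $e_2\cdot e_2$, $e_2\cdot e_3$ and $e_3\cdot e_3$ to the explicit rational values of ${\bf TP}_2(g^1_7)$.

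The step I expect to be the main obstacle is the associativity bookkeeping in the branch $\alpha_3\neq0$. Because the $\frac12$-derivation formulas for $g^1_7$ carry the fractional coefficients $-\tfrac13\alpha_3$ and $-\tfrac23\alpha_3$, the compatibility equations couple the $e_3$-term of $e_1\cdot e_1$ to the $e_5,e_6,e_7$-terms of $e_1\cdot e_3$, $e_2\cdot e_2$, $e_2\cdot e_3$ and $e_3\cdot e_3$; I must check that all these couplings are simultaneously consistent and force precisely the normalized constants displayed (this is what selects the particular value of the $e_3$-coefficient and the accompanying rationals $-4,-\tfrac23,\tfrac23,2$). Confirming that no further triple imposes a hidden relation — so that the two families are exactly the complete list — is the part demanding the most care.
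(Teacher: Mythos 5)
Your proposal follows the paper's proof essentially verbatim: the $\frac12$-derivation form from Theorem \ref{halfderiv4} for each $\varphi_{e_i}$, the commutativity identifications reducing everything to the data of $\varphi_{e_1},\varphi_{e_2}$, the associativity relation $\alpha_3(\alpha_3+9\alpha_9)=0$ coming from the triple $\{e_1,e_1,e_2\}$, and the dichotomy on $\alpha_3$ via the scaling $\alpha_3'=\frac{A_1}{B_2}\alpha_3$ (the paper then normalizes $\alpha_3$ to $6$ rather than $1$ to clear denominators). The only cosmetic discrepancy is that $\alpha_{1,1}=0$ already falls out of the commutativity comparison for the pair $\{e_1,e_2\}$ (since $\varphi_{e_2}(e_1)$ has no $e_2$-component), so associativity is not needed for that step.
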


\begin{proof}
Let $(g^1_{7}, \cdot, [-,-])$ be a transposed Poisson algebra structure defined on the Lie algebra $g^3_{(n,1)}$. Then
for any element of $x \in g^1_{7} $, we have that operator of multiplication $\varphi_x(y) = x \cdot y$ is a $\frac12$-derivation. Hence, for $1 \leq i\leq 7$ by using Theorem \ref{halfderiv4} we derive

 $$\begin{cases}
\varphi_{e_i}(e_1)=\alpha_{i,1}e_1+\alpha_{i,3}e_3+ \alpha_{i,4}e_4+\alpha_{i,5}e_5+\alpha_{i,6}e_6+\alpha_{i,7}e_7, \\
\varphi_{e_i}(e_2)=\alpha_{i,1}e_2-\frac{1}{3}\alpha_{i,3}e_4+ \beta_{i,5}e_5+\beta_{i,6}e_6+\beta_{i,7}e_7,\\
\varphi_{e_i}(e_3)=\alpha_{i,1}e_3-\frac{2}{3}\alpha_{i,3}e_5+ \frac{1}{2}(\beta_{i,5}-\alpha_{i,4})e_6-\frac{1}{2}\alpha_{i,5}e_7, \\
\varphi_{e_i}(e_4)=\alpha_{i,1}e_4-\frac{1}{3}\alpha_{i,3}e_6+ \frac{1}{2}\alpha_{i,4}e_7, \ \varphi_{e_i}(e_5)=\alpha_{i,1}e_5-\frac{1}{2}\alpha_{i,3}e_7, \\ \varphi_{e_i}(e_6)=\alpha_{i,1}e_6, \ \varphi_{e_i}(e_7)=\alpha_{i,1}e_7.
  \end{cases}$$

By considering equality  $\varphi_{e_i}(e_j)=e_i\cdot e_j=e_j\cdot e_i=\varphi_{e_j}(e_i)$ for the some elements ${e_i}$ and $e_j$, we have the following restrictions:

 $$\begin{array}{lll}
    \{e_1, e_2\}, & \Rightarrow & \alpha_{1,1}=\alpha_{2,1}=\alpha_{2,3}=0, \ \alpha_{2,4}=-\frac{1}{3}\alpha_{1,3}, \ \alpha_{2,5}=\beta_{1,5}, \ \alpha_{2,6}=\beta_{1,6}, \ \alpha_{2,7}=\beta_{1,7},\\[1mm]
    \{e_1, e_3\}, & \Rightarrow & \alpha_{3,1}=\alpha_{3,3}=\alpha_{3,4}=0, \  \alpha_{3,5}=-\frac{2}{3}\alpha_{1,3}, \ \alpha_{3,6}=\frac{1}{2}(\beta_{1,5}-\alpha_{1,4}), \ \alpha_{3,7}=-\frac{1}{2}\alpha_{1,5},  \\[1mm]
    \{e_1, e_4\}, & \Rightarrow & \alpha_{4,1}=\alpha_{4,3}=\alpha_{4,4}=\alpha_{4,5}=0, \  \alpha_{4,6}=-\frac{1}{3}\alpha_{1,3}, \ \alpha_{4,7}=\frac{1}{2}\alpha_{1,4},  \\[1mm]
    \{e_1, e_5\}, & \Rightarrow & \alpha_{5,1}=\alpha_{5,3}=\alpha_{5,4}=\alpha_{5,5}=\alpha_{5,6}=0, \ \alpha_{5,7}=-\frac{1}{2}\alpha_{1,3},  \\[1mm]
    \{e_1, e_6\}, & \Rightarrow & \alpha_{6,1}=\alpha_{6,3}=\alpha_{6,4}=\alpha_{6,5}=\alpha_{6,6}=\alpha_{6,7}=0,  \\[1mm]
   \{e_1, e_7\}, & \Rightarrow & \alpha_{7,1}=\alpha_{7,3}=\alpha_{7,4}=\alpha_{7,5}=\alpha_{7,6}=\alpha_{7,7}=0,  \\[1mm]
\{e_2, e_3\}, & \Rightarrow & \beta_{3,5}=0, \ \beta_{3,6}=\frac{1}{6}(3\beta_{2,5}+\alpha_{1,3}), \ \beta_{3,7}=-\frac{1}{2}\beta_{1,5},  \\[1mm]
 \{e_2, e_4\}, & \Rightarrow & \beta_{4,5}=\beta_{4,6}=0, \ \beta_{4,7}=-\frac{1}{6}\alpha_{1,3},  \\[1mm]
 \{e_2, e_5\}, & \Rightarrow & \beta_{5,5}=\beta_{5,6}=\beta_{5,7}=0,\\[1mm]
 \{e_2, e_6\}, & \Rightarrow & \beta_{6,5}=\beta_{6,6}=\beta_{6,7}=0,\\[1mm]
 \{e_2, e_7\}, & \Rightarrow & \beta_{7,5}=\beta_{7,6}=\beta_{7,7}=0.    \end{array}$$

So, we have the following table of commutative multiplications of the transposed Poisson algebra structure $(g^1_{(7)},, \cdot, [-,-])$

$$\begin{cases}
e_1\cdot e_1=\alpha_{3}e_3+\alpha_{4}e_4 +\alpha_{5}e_5+\alpha_{6}e_6+\alpha_{7}e_7, \ e_1\cdot e_2=-\frac{1}{3}\alpha_{3}e_4+ \alpha_{1}e_5+\alpha_{2}e_6+\alpha_{8}e_7,\\
e_1\cdot e_3=-\frac{2}{3}\alpha_{3}e_5+ \frac{1}{2}(\alpha_{1}-\alpha_{4})e_6-\frac{1}{2}\alpha_{5}e_7, \ e_1\cdot e_4=-\frac{1}{3}\alpha_{3}e_6+ \frac{1}{2}\alpha_{4}e_7, \ e_1\cdot e_5=-\frac{1}{2}\alpha_{3}e_7, \\
e_2\cdot e_2=\alpha_{9}e_5+\alpha_{10}e_6+ \alpha_{11}e_7,\ e_2\cdot e_3=\frac{1}{6}(3\alpha_{9}+\alpha_{3})e_6-\frac{1}{2}\alpha_{1}e_7, \\
e_2\cdot e_4=-\frac{1}{6}\alpha_{3}e_7, \ e_3\cdot e_3=\frac{1}{3}\alpha_{3}e_7, \\
\end{cases}$$

The equality $e_1\cdot (e_1\cdot e_2)=(e_1\cdot e_1)\cdot e_2$ gives $\alpha _3 \left(\alpha _3+9 \alpha _9\right)=0.$ Similarly above by using a general change of basis, we can show that $\alpha_3$ is an invariant.

\begin{enumerate}
    \item If $\alpha_3=0,$ then we have the algebra ${\bf TP}_1(g^1_{7})$;
    \item If $\alpha_3\neq0,$ then we get the algebra ${\bf TP}_2(g^1_{7})$.
\end{enumerate}
\end{proof}

\begin{theorem}\label{thm4klas} Let $(g^2_{9}, \cdot, [-,-])$ be a transposed Poisson algebra structure defined on the Lie algebra
$g^2_{9}$. Then the multiplication of $(g^2_{9}, \cdot)$ has the following form:
$${\bf TP}(g^2_{9}):\ \begin{cases}
e_1\cdot e_1=\alpha_{1}e_5+ \alpha_{2}e_6+ \alpha_{3}e_7+\alpha_{4}e_8+\alpha_{5}e_9, \ e_1\cdot e_2=\frac{1}{3}\alpha_{1}e_6+\alpha_{6}e_7+ \alpha_{7}e_8+\alpha_{8}e_9,\\[1mm]
e_1\cdot e_3=-\frac{4}{3}\alpha_{1}e_7+ \frac{1}{2}(\alpha_{6}- 5\alpha_{2})e_8-\frac{1}{2}\alpha_{3}e_9, \ e_1\cdot e_4=\frac{1}{3}\alpha_{1}e_8+ \frac{1}{2}\alpha_{2}e_9, \\[1mm]
e_1\cdot e_5=-\frac{1}{2}\alpha_{1}e_9, \ e_2\cdot e_2=\alpha_{9}e_7+\alpha_{10}e_8+ \alpha_{11}e_9,\\[1mm]
e_2\cdot e_3=\frac{1}{6}(3\alpha_{9}- 5\alpha_{1})e_8-\frac{1}{2}\alpha_{6}e_9, \ e_2\cdot e_4=\frac{1}{6}\alpha_{1}e_9, \ e_3\cdot e_3=\frac{2}{3}\alpha_{1}e_9,\end{cases}$$
where the transposed Poisson algebra has its products with respect to the bracket $[-,-]$, and remaining products are equal to zero.
\end{theorem}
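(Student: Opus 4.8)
The plan is to follow the same strategy as in the preceding theorems. By Lemma~\ref{lemma1}, for every basis vector $e_i$ the left multiplication $\varphi_{e_i}(y)=e_i\cdot y$ is a $\frac12$-derivation of $g^2_{9}$, so Theorem~\ref{2halfderiv12} forces $\varphi_{e_i}$ into the displayed shape with an indexed family of parameters $\alpha_{i,1},\alpha_{i,5},\dots,\alpha_{i,9},\beta_{i,7},\beta_{i,8},\beta_{i,9}$, all remaining coordinates being determined. Since $e_1,e_2$ generate $g^2_{9}$, the entire commutative product is controlled by $\varphi_{e_1}$ and $\varphi_{e_2}$, i.e. by the three products $e_1\cdot e_1$, $e_1\cdot e_2$ and $e_2\cdot e_2$.

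First I would impose commutativity in the form $\varphi_{e_i}(e_j)=e_i\cdot e_j=e_j\cdot e_i=\varphi_{e_j}(e_i)$ for every pair $\{e_i,e_j\}$. This is a linear system in the parameters above, and solving it pair by pair expresses all the $\alpha_{i,\ast},\beta_{i,\ast}$ through the coefficients of $e_1\cdot e_1$, $e_1\cdot e_2$ and $e_2\cdot e_2$. The decisive point is the diagonal scalar $\alpha_{1,1}$: the pair $\{e_1,e_5\}$ yields $\alpha_{5,5}=\alpha_{1,1}$, while $\{e_1,e_2\}$ and $\{e_2,e_5\}$ give $\varphi_{e_5}(e_2)=0$, whose $e_6$-component equals $\tfrac13\alpha_{5,5}$; comparing these forces $\alpha_{1,1}=0$. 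Consequently every product lands in the subspace $\langle e_5,\dots,e_9\rangle$, and the surviving free parameters are exactly $\alpha_1,\dots,\alpha_{11}$, producing the table asserted in ${\bf TP}(g^2_{9})$.

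Next I would check the associativity constraint $x\cdot(y\cdot z)=(x\cdot y)\cdot z$, and I expect it to yield no new relations for a structural reason: since all products lie in $\langle e_5,\dots,e_9\rangle$, a second multiplication can be nonzero only through $e_1\cdot e_5=-\tfrac12\alpha_1 e_9$, and the only product carrying a nonzero $e_5$-component is $e_1\cdot e_1$. Hence every iterated product vanishes except in the case $x=y=z=e_1$, which is automatically consistent by commutativity. This is precisely what distinguishes $g^2_{9}$ from $g^2_{(5,1)}$ and $g^2_{(6,1)}$: there is no low-index ``leading'' structure constant on which to branch or which a change of basis could normalise, so no case analysis occurs and a single family ${\bf TP}(g^2_{9})$ results.

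The main obstacle is therefore not conceptual but the commutativity bookkeeping: one must run through all pairs $\{e_i,e_j\}$ and track the fractional coefficients (such as $-\tfrac43$ and $\tfrac16(3\alpha_9-5\alpha_1)$) that the half-derivation form of $g^2_{9}$ injects, the extraction of $\alpha_{1,1}=0$ being the only genuinely delicate step. Once the table is in hand, the associativity verification is routine.
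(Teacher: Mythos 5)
Your proposal is correct and follows essentially the same route as the paper: invoke Lemma \ref{lemma1} and the $\frac12$-derivation description of $g^2_{9}$, impose commutativity $\varphi_{e_i}(e_j)=\varphi_{e_j}(e_i)$ over all pairs to pin down the table, and then observe that associativity adds nothing. Two minor remarks: your extraction of $\alpha_{1,1}=0$ via $\{e_1,e_5\}$ and $\{e_2,e_5\}$ is valid but more roundabout than needed, since it already follows from the single pair $\{e_1,e_2\}$ (the prescribed form of $\varphi(e_1)$ has no $e_2$-component, so the $e_2$-coefficient $\alpha_{1,1}$ of $\varphi_{e_1}(e_2)$ must vanish); and your structural argument that every iterated product factors through $e_1\cdot e_5$, reducing associativity to the triple $(e_1,e_1,e_1)$, is a clean justification of what the paper merely asserts.
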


\begin{proof}
Let $(g^2_{9}, \cdot, [-,-])$ be a transposed Poisson algebra structure defined on Lie algebra $g^2_{9}$. Then
for any element $x \in g^2_{9} $, we have that operator of multiplication $\varphi_x(y) = x \cdot y$ is a $\frac12$-derivation. Hence, for $1 \leq i\leq 9$ we derive

$$\begin{cases}
\varphi_{e_i}(e_1)=\alpha_{i,1}e_1+\alpha_{i,5}e_5+ \alpha_{i,6}e_6+\alpha_{i,7}e_7+\alpha_{i,8}e_8+\alpha_{i,9}e_9, \\
\varphi_{e_i}(e_2)=\alpha_{i,1}e_2+\frac{1}{3}\alpha_{i,5}e_6+ \beta_{i,7}e_7+ \beta_{i,8}e_8+\beta_{i,9}e_9,\\
\varphi_{e_i}(e_3)=\alpha_{i,1}e_3-\frac{4}{3}\alpha_{i,5}e_7+ \frac{1}{2}(\beta_{i,7}- 5\alpha_{i,6})e_8-\frac{1}{2}\alpha_{i,7}e_9, \\
\varphi_{e_i}(e_4)=\alpha_{i,1}e_4+\frac{1}{3}\alpha_{i,5}e_8+ \frac{1}{2}\alpha_{i,6}e_9, \ \varphi_{e_i}(e_5)=\alpha_{i,1}e_5-\frac{1}{2}\alpha_{i,5}e_9, \ \varphi_{e_i}(e_j)=\alpha_{i,1}e_j, \ \ 6\leq j\leq 9. \\
\end{cases}$$

It is known that $\varphi_{e_i}(e_j)=e_i\cdot e_j=e_j\cdot e_i=\varphi_{e_j}(e_i).$ Then for some pair elements, we derive the following restrictions:

$$\begin{array}{lll}
    \{e_1, e_2\}, & \Rightarrow & \alpha_{1,1}=\alpha_{2,1}=\alpha_{2,5}=0, \ \alpha_{2,6}=\frac{1}{3}\alpha_{1,5}, \ \alpha_{2,7}=\beta_{1,7}, \ \alpha_{2,8}=\beta_{1,8}, \ \alpha_{2,9}=\beta_{1,9},\\[1mm]
    \{e_1, e_3\}, & \Rightarrow & \alpha_{3,1}=\alpha_{3,5}=\alpha_{3,6}=0, \  \alpha_{3,7}=-\frac{4}{3}\alpha_{1,5}, \ \alpha_{3,8}=\frac{1}{2}(\beta_{1,7}- 5\alpha_{1,6}), \ \alpha_{3,9}=-\frac{1}{2}\alpha_{1,7},  \\[1mm]
    \{e_1, e_4\}, & \Rightarrow & \alpha_{4,1}=\alpha_{4,5}=\alpha_{4,6}=\alpha_{4,7}=0, \  \alpha_{4,8}=\frac{1}{3}\alpha_{1,5}, \ \alpha_{4,9}=\frac{1}{2}\alpha_{1,6},  \\[1mm]
    \{e_1, e_5\}, & \Rightarrow & \alpha_{5,1}=\alpha_{5,5}=\alpha_{5,6}=\alpha_{5,7}=\alpha_{5,8}=0, \ \alpha_{5,9}=-\frac{1}{2}\alpha_{1,5},  \\[1mm]
    \{e_1, e_j\}, & \Rightarrow & \alpha_{j,1}=\alpha_{j,5}=\alpha_{j,6}=\alpha_{j,7}=\alpha_{j,8}=\alpha_{j,9}, \ 6\leq j\leq 9, \\[1mm]
 \{e_2, e_3\}, & \Rightarrow & \beta_{3,7}=0, \ \beta_{3,8}=\frac{1}{6}(3\beta_{2,7}- 5\alpha_{1,5}), \ \beta_{3,9}=-\frac{1}{2}\beta_{1,7},  \\[1mm]
 \{e_2, e_4\}, & \Rightarrow & \beta_{4,7}=\beta_{4,8}=0, \ \beta_{4,9}=\frac{1}{6}\alpha_{1,3},  \\[1mm]
 \{e_2, e_j\}, & \Rightarrow & \beta_{j,7}=\beta_{j,8}=\beta_{j,9}=0, \ 5\leq j\leq 9.\\[1mm]
   \end{array}$$

So, we have the following table of multiplications of the transposed Poisson algebra structure $(g^2_{(9)},, \cdot, [-,-])$. Note the associative identity for any triple is hold.

$$\begin{cases}
e_1\cdot e_1=\alpha_{1}e_5+ \alpha_{2}e_6+ \alpha_{3}e_7+\alpha_{4}e_8+\alpha_{5}e_9, \ e_1\cdot e_2=\frac{1}{3}\alpha_{1}e_6+\alpha_{6}e_7+ \alpha_{7}e_8+\alpha_{8}e_9,\\
e_1\cdot e_3=-\frac{4}{3}\alpha_{1}e_7+ \frac{1}{2}(\alpha_{6}- 5\alpha_{2})e_8-\frac{1}{2}\alpha_{3}e_9, \ e_1\cdot e_4=\frac{1}{3}\alpha_{1}e_8+ \frac{1}{2}\alpha_{2}e_9, \\
e_1\cdot e_5=-\frac{1}{2}\alpha_{1}e_9, \ e_2\cdot e_2=\alpha_{9}e_7+\alpha_{10}e_8+ \alpha_{11}e_9,\\
e_2\cdot e_3=\frac{1}{6}(3\alpha_{9}- 5\alpha_{1})e_8-\frac{1}{2}\alpha_{6}e_9, \ e_2\cdot e_4=\frac{1}{6}\alpha_{1}e_9, \ e_3\cdot e_3=\frac{2}{3}\alpha_{1}e_9.
\end{cases}$$
\end{proof}
\begin{theorem}\label{thm5klas} Let $(g^3_{11}, \cdot, [-,-])$ be a transposed Poisson algebra structure defined on the Lie algebra
$g^3_{11}$. Then the multiplication of $(g^3_{11}, \cdot)$ has the following form:
$${\bf TP}(g^3_{11}):\ \begin{cases}
e_1\cdot e_1=\alpha_{6}e_6+ \alpha_{7}e_7+ \alpha_{8}e_8+ \alpha_{9}e_9+ \alpha_{10}e_{10}+ \alpha_{11}e_{11}, \\
e_1\cdot e_2=-\alpha_{6}e_7-\alpha_{7}e_8+\alpha_{1}e_9+ \alpha_{2}e_{10}+\alpha_{3}e_{11},\ e_1\cdot e_3=\frac{1}{2}\alpha_{1}e_{10}-\frac{1}{2}\alpha_{9}e_{11}, \\ e_1\cdot e_4=\frac{1}{2}\alpha_{7}e_{10}+ \frac{1}{2}\alpha_{8}e_{11}, \ e_1\cdot e_5=-\frac{1}{2}\alpha_{6}e_{10}-\frac{1}{2}\alpha_{7}e_{11}, \ e_1\cdot e_6=\frac{1}{2}\alpha_{6}e_{11}, \\
e_2\cdot e_2=\alpha_{6}e_8+\alpha_{4}e_9+ \alpha_{5}e_{10}+\alpha_{12}e_{11},\ e_2\cdot e_3=\frac{1}{2}\alpha_{4}e_{10}-\frac{1}{2}\alpha_{1}e_{11}, \\
e_2\cdot e_4=-\frac{1}{2}\alpha_{6}e_{10}- \frac{1}{2}\alpha_{7}e_{11}, \ e_2\cdot e_5=\frac{1}{2}\alpha_{6}e_{11},
\end{cases}$$
where it is taken into account that the transposed Poisson algebra has its products with respect to the bracket $[-,-]$, and the remaining products are equal to zero.
\end{theorem}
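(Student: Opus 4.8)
The plan is to follow exactly the template used for the preceding theorems, specialised to the $11$-dimensional algebra $g^3_{11}$. By Lemma~\ref{lemma1}, in any transposed Poisson structure on $g^3_{11}$ the operator of left multiplication $\varphi_x(y)=x\cdot y$ is a $\frac12$-derivation for every $x$. Since $e_1,e_2$ generate $g^3_{11}$, I would first invoke Theorem~\ref{2halfderiv3} to write, for each $1\le i\le 11$, the image $\varphi_{e_i}(e_j)$ ($1\le j\le 11$) in terms of indexed structure constants $\alpha_{i,1},\alpha_{i,6},\dots,\alpha_{i,11}$ and $\beta_{i,9},\beta_{i,10},\beta_{i,11}$, reproducing the shape of the five explicit lines of that theorem with the $i$-label inserted. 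This records the a priori freedom: each $\varphi_{e_i}$ is determined by its action on the two generators $e_1,e_2$, and all remaining images are forced by the derivation relations already established in the proof of Theorem~\ref{2halfderiv3}.

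The next step, and the one carrying essentially all the computation, is to impose commutativity of the product, i.e.\ the identities $\varphi_{e_i}(e_j)=e_i\cdot e_j=e_j\cdot e_i=\varphi_{e_j}(e_i)$ for every pair $\{e_i,e_j\}$. Comparing coefficients on each basis vector collapses the large array of constants $\alpha_{i,\bullet},\beta_{i,\bullet}$ onto the short list of free parameters $\alpha_1,\dots,\alpha_5,\alpha_6,\dots,\alpha_{12}$ appearing in the statement. Concretely, the pairs $\{e_1,e_k\}$ for $3\le k\le 6$ fix the images $\varphi_{e_k}(e_1)$ and force $\alpha_{1,1}=0$, the pairs $\{e_2,e_k\}$ propagate the $\beta$-parameters up the chain, and the top-dimensional overlaps at $e_{10},e_{11}$ tie together the half-derivation data of several generators. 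Collecting the surviving relations yields precisely the commutative table displayed as ${\bf TP}(g^3_{11})$.

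Finally I would verify the associativity identity $x\cdot(y\cdot z)=(x\cdot y)\cdot z$ on all basis triples. Here I expect the situation to parallel the $g^2_{9}$ case rather than the $g^2_{(n,1)}$ or $g^3_{(n,1)}$ cases: because the products land deep in the center (all nonzero values lie in $\mathrm{span}\langle e_7,\dots,e_{11}\rangle$, which is annihilated by $\cdot$), every associator vanishes identically, so associativity imposes \emph{no} further constraints. Consequently there is a single resulting family and no normalising change of basis or case-splitting is needed, which is why the theorem lists only ${\bf TP}(g^3_{11})$. The main obstacle is therefore not conceptual but the bookkeeping in the symmetry step: with eleven basis vectors the coefficient-matching produces a long list of equations, and the delicate point is maintaining consistency among the several half-derivation images that simultaneously contribute to $e_{10}$ and $e_{11}$, where the structure constants of $g^3_{11}$ (including the anomalous $[e_2,e_i]=-e_{i+2}$ and $[e_3,e_7]=-e_{10}$ terms) interact. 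Once that is handled carefully, reading off the table and confirming triviality of the associators completes the argument.
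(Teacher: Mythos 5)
Your proposal follows essentially the same route as the paper: write each $\varphi_{e_i}$ via Theorem~\ref{2halfderiv3}, impose the symmetry $\varphi_{e_i}(e_j)=\varphi_{e_j}(e_i)$ pair by pair to collapse the parameters, and observe that associativity adds no further constraints, so a single family ${\bf TP}(g^3_{11})$ results. One small imprecision: the products do not all lie in $\mathrm{span}\langle e_7,\dots,e_{11}\rangle$ (the term $\alpha_6 e_6$ in $e_1\cdot e_1$ is not annihilated, since $e_1\cdot e_6=\tfrac12\alpha_6 e_{11}$), but the only associator it could affect is the trivially symmetric $(e_1\cdot e_1)\cdot e_1$, so your conclusion that associativity is automatic still holds.
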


\begin{proof}
Let $(g^3_{11}, \cdot, [-,-])$ be a transposed Poisson algebra structure defined on Lie algebra $g^3_{11}$. Then
for any element of $x \in g^3_{11} $ we have that, operator of multiplication $\varphi_x(y) = x \cdot y$ is a $\frac12$-derivation. Further, using Theorem \ref{2halfderiv3} for $1 \leq i\leq 11$ we derive
$$\begin{cases}
\varphi_{e_i}(e_1)=\alpha_{i,1}e_1+\alpha_{i,6}e_6+ \alpha_{i,7}e_7+ \alpha_{i,8}e_8+ \alpha_{i,9}e_9+ \alpha_{i,10}e_{10}+\alpha_{i,11}e_{11}, \\

\varphi_{e_i}(e_2)=\alpha_{i,1}e_2-\alpha_{i,6}e_7-\alpha_{i,7}e_8+ \beta_{i,9}e_9+ \beta_{i,10}e_{10}+ \beta_{i,11}e_{11},\\

\varphi_{e_i}(e_3)=\alpha_{i,1}e_3+\frac{1}{2}\beta_{i,9}e_{10}-\frac{1}{2}\alpha_{i,9}e_{11}, \\

\varphi_{e_i}(e_4)=\alpha_{i,1}e_4+\frac{1}{2}\alpha_{i,7}e_{10}+ \frac{1}{2}\alpha_{i,8}e_{11}, \\

\varphi_{e_i}(e_5)=\alpha_{i,1}e_5-\frac{1}{2}\alpha_{i,6}e_{10}-\frac{1}{2}\alpha_{i,7}e_{11}, \\

\varphi_{e_i}(e_6)=\alpha_{i,1}e_6+\frac{1}{2}\alpha_{i,6}e_{11}, \\

\varphi_{e_i}(e_j)=\alpha_{i,1}e_j, \ \ 7\leq j\leq 11. \\
\end{cases}$$

Considering the property $\varphi_{e_i}(e_j)=e_i\cdot e_j=e_j\cdot e_i=\varphi_{e_j}(e_i)$, we obtain the following restrictions:

 $$\begin{array}{lll}
    \{e_1, e_2\}, & \Rightarrow & \alpha_{1,1}=\alpha_{2,1}=\alpha_{2,6}=0, \\[1mm]
     & &\alpha_{2,7}=-\alpha_{1,6}, \ \alpha_{2,8}=-\alpha_{1,7}, \ \alpha_{2,9}=\beta_{1,9}, \ \alpha_{2,10}=\beta_{1,10}, \ \alpha_{2,11}=\beta_{1,11},\\[1mm]
    \{e_1, e_3\}, & \Rightarrow & \alpha_{3,1}=\alpha_{3,6}=\alpha_{3,7}=\alpha_{3,8}=\alpha_{3,9}=0, \ \alpha_{3,10}=\frac{1}{2}\beta_{1,9}, \ \alpha_{3,11}=-\frac{1}{2}\alpha_{1,9},  \\[1mm]
    \{e_1, e_4\}, & \Rightarrow & \alpha_{4,1}=\alpha_{4,6}=\alpha_{4,7}=\alpha_{4,8}=\alpha_{4,9}=0, \ \alpha_{4,10}=\frac{1}{2}\alpha_{1,7}, \ \alpha_{4,11}=\frac{1}{2}\alpha_{1,8},  \\[1mm]
    \{e_1, e_5\}, & \Rightarrow & \alpha_{5,1}=\alpha_{5,6}=\alpha_{5,7}=\alpha_{5,8}=\alpha_{5,9}=0, \ \alpha_{5,10}=-\frac{1}{2}\alpha_{1,6}, \ \alpha_{5,11}=-\frac{1}{2}\alpha_{1,7},  \\[1mm]
   \{e_1, e_6\}, & \Rightarrow & \alpha_{6,1}=\alpha_{6,6}=\alpha_{6,7}= \alpha_{6,8}=\alpha_{6,9}=\alpha_{6,10}=0, \ \alpha_{6,11}=\frac{1}{2}\alpha_{1,6},  \\[1mm]
    \{e_1, e_j\}, & \Rightarrow & \alpha_{j,1}=\alpha_{j,6}=\alpha_{j,7}= \alpha_{j,8}=\alpha_{j,9}=\alpha_{j,10}=\alpha_{j,11}=0, \ 7\leq j\leq 9,  \\[1mm]

 \{e_2, e_3\}, & \Rightarrow & \beta_{3,9}=0, \ \beta_{3,10}=\frac{1}{2}\beta_{2,9}, \ \beta_{3,11}=-\frac{1}{2}\beta_{1,9},  \\[1mm]
 \{e_2, e_4\}, & \Rightarrow & \beta_{4,9}=0, \ \beta_{4,10}=-\frac{1}{2}\alpha_{1,6}, \ \beta_{4,11}=-\frac{1}{2}\alpha_{1,7},  \\[1mm]
 \{e_2, e_5\}, & \Rightarrow & \beta_{5,9}=\beta_{5,10}=0, \ \beta_{5,11}=\frac{1}{2}\alpha_{1,6},\\[1mm]
 \{e_2, e_j\}, & \Rightarrow & \beta_{j,9}=\beta_{j,10}=\beta_{j,11}=0, \ 6\leq j\leq 11.
    \end{array}$$
Thus, from the above restrictions we obtain the transposed Poisson algebra structure ${\bf TP}(g^3_{11})$ defined on Lie algebra  $g^3_{11}.$
\end{proof}

\bigskip

\end{document}